\numberwithin{equation}{section}
\newtheorem{theorem}{Theorem}[section]
\newtheorem{lemma}[theorem]{Lemma}
\newtheorem{remark}[theorem]{Remark}
\newtheorem{assumption}[theorem]{Assumption}
\newcommand{\dd}{\,\mathrm{d}}
\newcommand{\R}{\mathbb{R}}
\newcommand{\N}{\mathbb{N}}
\newcommand{\E}{\mathbb{E}}
\title[Mean-field stochastic Volterra equations]{Mean-field stochastic Volterra equations}
\author[Pr{\"o}mel]{David J. Pr{\"o}mel}
\address{David J. Pr{\"o}mel, University of Mannheim, Germany}
\email{proemel@uni-mannheim.de}
\author[Scheffels]{David Scheffels}
\address{David Scheffels, University of Mannheim, Germany}
\email{dscheffe@mail.uni-mannheim.de}
\date{\today}
\begin{document}

\begin{abstract}
  The well-posedness is established for multi-dimensional mean-field stochastic Volterra equations with Lipschitz continuous coefficients and allowing for singular kernels as well as for one-dimensional mean-field stochastic Volterra equations with H{\"o}lder continuous diffusion coefficients and sufficiently regular kernels. In these different settings, quantitative, pointwise propagation of chaos results are derived for the associated Volterra type interacting particle systems.
\end{abstract}

\maketitle

\noindent \textbf{Key words:} mean-field SDE; McKean--Vlasov process; pathwise uniqueness; propagation of chaos; stochastic Volterra equation; strong solution; Yamada--Watanabe theorem.

\noindent \textbf{MSC 2020 Classification:} 60H20, 60K35, 45D05.



\section{Introduction}

Mean-field stochastic differential equations (mean-field SDEs), also known as McKean--Vlasov stochastic differential equations, provide mathematical descriptions of random systems of interacting particles, whose time evolutions depend, in some manner, on the probability distribution of the entire systems. A crucial reason for the frequent use of mean-field SDEs in applied mathematics is the fact that they allow for modelling the phenomena of ``propagation of chaos'' of large interacting particle systems. Recall, on a microscopic scale the trajectory of each individual particle can often be appropriately modelled by a stochastic process. However, when the number of particles becomes very large, the microscopic scale usually contains too much information, making the interaction of individual particles intractable. Fortunately, sending the number of particles to infinity, propagation of chaos states that the behavior of an individual particle depends only on the probability distribution of the entire system, i.e., on the macroscopic scale the interaction of individual particles becomes negligible.

Mean-field SDEs as well as propagation of chaos originated in statistical physics and were first studied by Kac~\cite{Kac56}, McKean~\cite{McKean66} and Vlasov~\cite{Vlasov1968}. By now, these concepts have found a wide range of applications in a variety of fields such as physics, finance and data science. We refer, e.g., to \cite{Sznitman1991,Jabin2017,Carmona2018,Carmona2018b,Chaintron2022,Chaintron2022b} for comprehensive introductions to mean-field SDEs and their numerous applications. Except a very small number of publications, like the rough path based approaches to mean-field SDEs \cite{Bailleul2020,Coghi2020,Bailleul2021}, the vast majority of literature on mean-field SDEs and propagation of chaos is restricted to Markovian systems of interacting particles, i.e. the behavior of each particle has to be independent of all past states of the systems. On the contrary, it is well observed that many real-world dynamical systems do have memory effects and, thus, do indeed depend on past states of the underlying systems. Well-known examples of such systems are the growth of populations, the spread of epidemics and turbulence flows.

Classical mathematical models for random dynamical systems with memory effects are given by stochastic Volterra equations (SVEs), as introduced in the seminal works of Berger and Mizel~\cite{Berger1980a,Berger1980b}, see also e.g. \cite{Protter1985,Pardoux1990}. While SVEs allow for generating non-Markovian stochastic processes, the solutions of SVEs, in contrast to mean-field SDEs, do not depend directly on the probability distributions of the generated random systems.

In the present paper we aim to unify the theories of mean-field stochastic differential equations and stochastic Volterra equations, which enables to combine the desirable modelling advantages of both classes of equations. More precisely, we introduce mean-field stochastic Volterra equations (mean-field SVEs)
\begin{equation}\label{eq:Intro MVSVE}
  X_t = X_0+\int_0^t K_{\mu}(s,t)\mu(s,X_s,\mathcal{L}(X_s))\dd s+\int_0^t K_{\sigma}(s,t)\sigma(s,X_s,\mathcal{L}(X_s))\dd B_s,\quad t\in [0,T],
\end{equation}
where $X_0$ is a random variable, $B$ is a Brownian motion, and the coefficients $\mu, \sigma$ as well as the kernels $K_\mu, K_\sigma$ are measurable functions. Here, $\mathcal{L}(X_s)$ denotes the law of the random variable~$X_s$. In words, mean-field SVEs are a class of stochastic integral equations that describe the dynamics of random systems with both nonlinear interactions and memory effects. They constitute a generalization of mean-field SDEs and of classical SVEs. Notice that a solution to the mean-field SVE~\eqref{eq:Intro MVSVE} is, in general, neither a Markov process nor a semimartingale.

Our first contribution is to establish the (strong) well-posedness of the mean-field SVE~\eqref{eq:Intro MVSVE}, meaning that there exists a unique strong solution to~\eqref{eq:Intro MVSVE}, under two sets of assumptions. On the one hand, we show the existence of a unique solution to the mean-field SVE~\eqref{eq:Intro MVSVE} in a multi-dimensional setting with standard assumptions on the kernels and coefficients, i.e. we assume some integrability on the kernels as well as Lipschitz continuity and a linear growth condition on the coefficients, cf. e.g. \cite{Wang2008,Carmona2016}. The proof is based on a classical fixed point argument in combination with techniques from the theories of mean-field SDEs and SVEs. On the other hand, we show the existence of a unique solution to the mean-field SVE~\eqref{eq:Intro MVSVE} in a one-dimensional setting, assuming sufficiently smooth kernels and H{\"o}lder continuous diffusion coefficients which are independent of the law of the solution. To that end, we rely on a Yamada--Watanabe approach~\cite{Yamada1971}, as recently generalized in \cite{AbiJaber2019, Promel2023} to SVEs with sufficiently smooth kernels. As comparison, for well-posedness results in case of mean-field SDEs we refer to \cite{Bahlali2020,Kalinin2022,Huang2023} and in case of SVEs to \cite{Wang2008,AbiJaber2019,Promel2023}. Furthermore, let us remark that a specific type of mean-field SVEs was studied in \cite{Shi2013}, where the coefficients may depend on the law of the solution but only through an expectation operator.

Our second contribution is to establish quantitative, pointwise propagation of chaos results of Volterra-type  systems of interacting particles. In words, sending the number of Volterra-type interacting particles to infinity, we obtain a macroscopic description of the systems based on a mean-field stochastic Volterra equation. The developed approach is based on a synchronous coupling method, as it was initiated by McKean~\cite{McKean1967} and extended by Sznitman~\cite{Sznitman1991}. In the case of mean-field SDEs, synchronous coupling methods are widely used for systems that are described by systems of McKean--Vlasov diffusions, and often lead to pathwise propagation of chaos, see e.g. \cite[Theorem~3.20]{Chaintron2022}, \cite[Theorem~1.10]{Carmona2016} and \cite{Huang2023}. In the present case of mean-field SVEs, implementing a synchronous coupling method becomes more challenging as the underlying McKean--Vlasov processes are of Volterra type and, thus, in general, lack the semimartingale and Markov property. As for the presented well-posedness theory of mean-field SVEs, we distinguish between the aforementioned multi- and one-dimensional setting. The pointwise natures of the presented propagation of chaos results for mean-field SVEs is caused by the non-availability of a Burkholder--Davis--Gundy inequality in the multi-dimensional setting and by the H{\"o}lder continuity of the diffusion coefficients in the one-dimensional setting. The latter setting requires to combine the synchronous coupling method with a Yamada--Watanabe approach.

\medskip

\noindent \textbf{Organization of the paper:} In Section~\ref{sec:main result} we present the main results regarding the well-posedness and propagation of chaos for mean-field stochastic Volterra equations. Section~\ref{sec:well-posedness of SVEs} provides some necessary well-posedness results for ordinary stochastic Volterra equations. The proofs of the main results are contained in Section~\ref{sec:1}, \ref{sec:2} and \ref{sec:3}.

\medskip

\noindent\textbf{Acknowledgments:} D. Scheffels gratefully acknowledges financial support by the Research Training Group ``Statistical Modeling of Complex Systems'' (RTG 1953) funded by the German Science Foundation (DFG). D. J. Pr{\"o}mel and D. Scheffels would like to thank P. Nikolaev for fruitful discussions helping to improve the present work.

\section{Main results: well-posedness and propagation of chaos}\label{sec:main result}

Let $T\in (0,\infty)$, $d,m\in\N$, and let $(\Omega,\mathcal{F},(\mathcal{F}_t)_{t\in [0,T]},\mathbb{P})$ be a filtered probability space, which satisfies the usual conditions. Suppose $B=(B_t)_{t\in [0,T]}$ is an $m$-dimensional Brownian motion with respect to $(\mathcal{F}_t)_{t\in [0,T]}$. The law of a random variable~$X$ is denoted by $\mathcal{L}(X)$ and, for $p\geq 1$, the space of probability measures on $\R^d$ with finite $p$-th moments by $\mathcal{P}_{p}(\R^d)$. Let $C([0,T];\R^d)$ be the space of continuous functions from $[0,T]$ to $\R^d$, equipped with the supremum norm $\|\cdot\|_\infty$, and $\mathcal{P}_p(C([0,T];\R^d))$ be the space of probability measures on $C([0,T];\R^d)$ with finite $p$-th moment. For $\rho,\tilde{\rho}\in\mathcal{P}_p(\R^d)$, we write $W_p(\rho,\tilde{\rho})$ for the $p$-Wasserstein distance between $\rho$ and $\tilde{\rho}$, see \cite[Chapter~5]{Carmona2018} for its definition, and with a slight abuse of notation, for $\rho,\tilde{\rho} \in \mathcal{P}_p(C([0,T];\R^d))$ we define the $p$-Wasserstein distance by
\begin{equation*}
  W_p(\rho,\tilde{\rho}) = \inf_{\pi \in \Pi(\rho,\tilde{\rho})} \Big[ \int_{C([0,T];\R^d)^2} \|x-y\|_\infty^p \dd\pi(x,y)\Big]^{1/p},
\end{equation*}
where $\Pi(\rho,\tilde{\rho})$ denotes the set of all probability measures on $C([0,T];\R^d)^2$ with marginal distributions given by $\rho$ and $\tilde{\rho}$, respectively. The space $\R^d$ is always equipped with the Euclidean norm~$| \cdot |$ and on the space $\R^{d\times m} $ we use the Frobenius norm also denoted by~$| \cdot |$. Moreover, we set $\Delta_T:=\lbrace (s,t)\in [0,T]\times [0,T]\colon \, 0\leq s\leq t\leq T \rbrace$ and use the notation $A_{\eta}\lesssim B_{\eta}$ for a generic parameter~$\eta$, meaning that $A_{\eta}\le CB_{\eta}$ for some constant $C>0$ independent of~$\eta$.

\medskip

We consider the $d$-dimensional mean-field stochastic Volterra equation
\begin{equation}\label{eq:MVSVE}
  X_t = X_0+\int_0^t K_{\mu}(s,t)\mu(s,X_s,\mathcal{L}(X_s))\dd s+\int_0^t K_{\sigma}(s,t)\sigma(s,X_s,\mathcal{L}(X_s))\dd B_s,\quad t\in [0,T],
\end{equation}
where $X_0$ is a $d$-dimensional, $\mathcal{F}_0$-measurable random variable, which is independent of $B$, the coefficients $\mu\colon[0,T]\times\R^d\times\mathcal{P}_{p}(\R^d)\to\R^d$, $\sigma\colon[0,T]\times\R^d\times\mathcal{P}_{p}(\R^{d})\to\R^{d\times m}$ and the kernels $K_\mu, K_\sigma\colon \Delta_T\to \R$ are measurable functions. The integral $\int_0^t K_{\sigma}(s,t)\sigma(s,X_s,\mathcal{L}(X_s))\dd B_s$ is defined as a stochastic It{\^o} integral.

\medskip

Let us briefly recall the concepts of well-posedness, strong solutions and pathwise uniqueness. We use, for measure spaces $\mathcal{X},\mathcal{Y}$ and $p\geq 1$, the notation $L^p=L^p(\mathcal{X};\mathcal{Y})$ for the space of all $\mathcal{Y}$-valued, measurable, $p$-integrable functions on $\mathcal{X}$ and, for two Banach spaces $\mathcal{X},\mathcal{Y}$, $C(\mathcal{X};\mathcal{Y})$ for the space of all $\mathcal{Y}$-valued, continuous functions on $\mathcal{X}$. An $(\mathcal{F}_t)_{t\in[0,T]}$-progressively measurable stochastic process $(X_t)_{t\in [0,T]}$ in $L^p(\Omega\times [0,T];\R^d)$, on the given probability space $(\Omega,\mathcal{F},(\mathcal{F}_t)_{t\in[0,T]},\mathbb{P})$, is called a \textit{(strong) $L^p$-solution} of the mean-field SVE~\eqref{eq:MVSVE} if
\begin{equation*}
  \int_0^t (|K_\mu(s,t)\mu(s,X_s,\mathcal{L}(X_s))|+|K_\sigma(s,t)\sigma(s,X_s,\mathcal{L}(X_s))|^2 )\dd s<\infty \quad \text{for all }t\in[0,T],
\end{equation*}
and the integral equation~\eqref{eq:MVSVE} holds $\mathbb{P}$-almost surely. We say \textit{pathwise uniqueness in} $L^p$ holds for the mean-field SVE~\eqref{eq:MVSVE} if $\mathbb{P}(X_t=\tilde{X}_t, \,\forall t\in [0,T])=1$ for any two $L^p$-solutions $(X_t)_{t\in[0,T]}$ and $(\tilde{X}_t)_{t\in[0,T]}$ of \eqref{eq:MVSVE} defined on the same probability space $(\Omega,\mathcal{F},(\mathcal{F}_t)_{t\in[0,T]},\mathbb{P})$. We say that the mean-field SVE~\eqref{eq:MVSVE} is \textit{well-posed in $L^p$} (or that there exists a \textit{unique $L^p$-solution}) for $p\geq 1$ if there exists a strong $L^p$-solution to \eqref{eq:MVSVE} and pathwise uniqueness in $L^p$ holds.

\medskip

In the following we distinguish between a multi-dimensional and a one-dimensional setting since these settings allow to establish well-posedness of the mean-field SVE~\eqref{eq:MVSVE} with different regularity assumptions on the kernels and coefficients. The main existence and uniqueness results regarding mean-field SVEs as well as propagation of chaos are stated in Subsection~\ref{subs:Lipschitz} and~\ref{subs:Holder}. In the multi-dimensional setting (Subsection~\ref{subs:Lipschitz}) we make standard Lipschitz assumptions on the coefficients $\mu,\sigma$, whereas in the one-dimensional setting (Subsection~\ref{subs:Holder}) we assume that $\mu$ is Lipschitz continuous but allow $\sigma$ to be only H{\"o}lder continuous. We prove the corresponding results in Section~\ref{sec:1}, \ref{sec:2} and \ref{sec:3}.

\subsection{Mean-field SVEs with Lipschitz continuous coefficients}\label{subs:Lipschitz}

In this subsection we consider the multi-dimensional stochastic Volterra equation~\eqref{eq:MVSVE} with dimensions $d,m\in\N$ and coefficients $\mu,\sigma$ that are Lipschitz continuous in the space and distributional component, uniformly in the time component, allowing for potentially singular kernels. We start by stating the assumptions on the kernels.

\begin{assumption}\label{ass:kernels1}
  Assume there are constants $\gamma\in (0,\frac{1}{2}]$, $\epsilon>0$ and $L>0$, such that $K_\mu, K_\sigma\colon \Delta_T\to \R$ are measurable functions fulfilling
  \begin{align*}
    &\int_0^t |K_{\mu}(s,t')-K_{\mu}(s,t)|^{1+\epsilon}\dd s + \int_t^{t'} |K_{\mu}(s,t')|^{1+\epsilon}\dd s \leq L|t'-t|^{\gamma(1+\epsilon)},\\
    &\int_0^t |K_{\sigma}(s,t')-K_{\sigma}(s,t)|^{2+\epsilon}\dd s  + \int_t^{t'} |K_{\sigma}(s,t')|^{2+\epsilon}\dd s  \leq L|t'-t|^{\gamma(2+\epsilon)},
  \end{align*}
  for all $(t,t^\prime)\in \Delta_T$.
\end{assumption}

Note that Assumption~\ref{ass:kernels1} allows for singular kernels, like the fractional convolutional kernel $K(s,t)=(t-s)^{-\alpha}$ for $\alpha\in (0,1/2)$ and the examples provided in \cite[Example~1.3]{AbiJaber2021}. Moreover, let for $\epsilon>0$ given by Assumption~\ref{ass:kernels1}, the fixed parameter $\delta>2$ be defined by
\begin{equation}\label{eq:delta}
  \delta:=\frac{4+2\epsilon}{\epsilon},
\end{equation}
such that
\begin{equation}\label{eq:delta_Holder}
  \frac{2}{2+\epsilon}+\frac{2}{\delta}=1.
\end{equation}
In the following we use the $\delta$-Wasserstein distance on the space $\mathcal{P}_\delta(\R^d)$ of probability measures on $\R^d$ with finite $\delta$-th moments. Relying on the $\delta$-Wasserstein distance, we specify the assumptions on the regularity of the coefficients $\mu$ and $\sigma$, which are a classical linear growth condition and a Lipschitz assumption.

\begin{assumption}\label{ass:coefficients1}
  Let $\mu\colon [0,T]\times\R^d\times\mathcal{P}_{\delta}(\R^d)\to\R^d $ and $\sigma\colon [0,T]\times\R^d\times\mathcal{P}_{\delta}(\R^d)\to\R^{d\times m} $ be measurable functions such that:
  \begin{enumerate}
    \item[(i)] for any bounded set $\mathcal{K}\subset \mathcal{P}_{\delta}(\R^d)$, there is a constant $C_{\mathcal{K}}>0$, such that the linear growth condition
    \begin{equation*}
      |\mu(t,x,\rho)|+|\sigma(t,x,\rho)|\leq C_{\mathcal{K}} (1+|x|)
    \end{equation*}
    holds for all $\rho \in \mathcal{K}$, $t\in[0,T]$ and $x\in\R^d$;
    \item[(ii)] $\mu$ and $\sigma$ are Lipschitz continuous in $x$ and in $\rho$ w.r.t. the $\delta$-Wasserstein distance, uniformly in $t$, i.e. there is a constant $C_{\mu,\sigma}>0$ such that
    \begin{equation*}
      |\mu(t,x,\rho)-\mu(t,\tilde{x},\tilde{\rho})|+|\sigma(t,x,\rho)-\sigma(t,\tilde{x},\tilde{\rho})|
      \leq C_{\mu,\sigma}\big(|x-\tilde{x}|+W_{\delta}(\rho,\tilde{\rho})\big),
    \end{equation*}
    holds for all $t\in [0,T]$, $x,\tilde{x}\in \R^d$, and $\rho,\tilde{\rho}\in \mathcal{P}_{\delta}(\R^d)$.
  \end{enumerate}
\end{assumption}

Our first result is the well-posedness of the mean-field stochastic Volterra equation~\eqref{eq:MVSVE}.

\begin{theorem}\label{thm:well-posedness}
  Suppose that the initial value $X_0$ is in $L^p(\Omega;\R^d)$, the kernels $K_\mu, K_\sigma$ fulfill Assumption~\ref{ass:kernels1}, the coefficients $\mu,\sigma$ fulfill Assumption~\ref{ass:coefficients1}, and $p>\max\lbrace \frac{1}{\gamma},1+\frac{2}{\epsilon} \rbrace$, where $\gamma\in(0,\frac{1}{2}]$ and $\epsilon>0$ are given by Assumption~\ref{ass:kernels1}. Then, the mean-field stochastic Volterra equation~\eqref{eq:MVSVE} is well-posed in $L^p$. Moreover, for any $q\geq p$, if $X_0\in L^q(\Omega;\R^d)$, the unique $L^p$-solution $X$ of~\eqref{eq:MVSVE} satisfies
  \begin{equation}\label{fin_moments}
    \sup\limits_{t\in[0,T]}\E[|X_t|^{q}]<\infty.
  \end{equation}
\end{theorem}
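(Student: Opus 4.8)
The plan is to run a fixed-point argument over flows of marginal laws, built on the well-posedness theory for ordinary stochastic Volterra equations recalled in Section~\ref{sec:well-posedness of SVEs}. Fix a flow $\nu=(\nu_t)_{t\in[0,T]}$ in the complete metric space $\mathcal M$ of continuous curves in $\mathcal P_\delta(\R^d)$, metrized by $d_{\mathcal M}(\nu,\tilde\nu)=\sup_{t\in[0,T]}W_\delta(\nu_t,\tilde\nu_t)$. Substituting $\nu_s$ for $\mathcal L(X_s)$ in~\eqref{eq:MVSVE} turns it into an ordinary SVE whose coefficients $(s,x)\mapsto\mu(s,x,\nu_s)$ and $(s,x)\mapsto\sigma(s,x,\nu_s)$ are, by Assumption~\ref{ass:coefficients1}, Lipschitz in $x$ uniformly in $s$ and of linear growth; hence Section~\ref{sec:well-posedness of SVEs} yields a unique $L^p$-solution $X^\nu$, with a continuous modification because $p>1/\gamma$. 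This defines a map $\Phi\colon\nu\mapsto(\mathcal L(X^\nu_t))_{t\in[0,T]}$ (a self-map of $\mathcal M$ once the moment bound below is available), whose fixed points are exactly the $L^p$-solutions of~\eqref{eq:MVSVE}.

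The analytic core is a pair of estimates. For~\eqref{fin_moments} I would fix $t$, split~\eqref{eq:MVSVE} into its drift and diffusion part, and apply the Burkholder--Davis--Gundy inequality to the martingale $u\mapsto\int_0^u K_\sigma(s,t)\sigma(s,X_s,\mathcal L(X_s))\dd B_s$ evaluated at $u=t$, which is the substitute for a Burkholder--Davis--Gundy inequality in the time variable, unavailable here. Hölder's inequality then separates kernel from coefficient: the definition $\delta=(4+2\epsilon)/\epsilon$ is made precisely so that $\frac{2}{2+\epsilon}+\frac{2}{\delta}=1$, whence
\begin{equation*}
  \int_0^t|K_\sigma(s,t)|^2|\sigma(s,X_s,\mathcal L(X_s))|^2\dd s\le\Big(\int_0^t|K_\sigma(s,t)|^{2+\epsilon}\dd s\Big)^{\!\frac{2}{2+\epsilon}}\Big(\int_0^t|\sigma(s,X_s,\mathcal L(X_s))|^{\delta}\dd s\Big)^{\!\frac{2}{\delta}},
\end{equation*}
with the first factor bounded by Assumption~\ref{ass:kernels1}; the drift is handled analogously with exponent $1+\epsilon$. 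Combined with the linear growth of $\mu,\sigma$ and Jensen's inequality, this produces $\E[|X_t|^q]\lesssim 1+\E[|X_0|^q]+\int_0^t\psi(s,t)\,\E[|X_s|^q]\dd s$ for a Volterra kernel $\psi$ controlled by Assumption~\ref{ass:kernels1}, and a generalized (fractional) Gronwall inequality gives $\sup_{t\in[0,T]}\E[|X_t|^q]<\infty$. Replaying the same computation for the increment $X^\nu_{t'}-X^\nu_t$, now using the Hölder-in-time bounds on $|K_\mu(s,t')-K_\mu(s,t)|$ and $|K_\sigma(s,t')-K_\sigma(s,t)|$ in Assumption~\ref{ass:kernels1}, yields $\E[|X^\nu_{t'}-X^\nu_t|^p]\lesssim|t'-t|^{\gamma p}$, so Kolmogorov's continuity criterion (this is where $\gamma p>1$ is used) supplies the continuous modification.

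Applying the identical estimates to the difference $X^\nu-X^{\tilde\nu}$ of two frozen solutions, the Lipschitz assumption converts coefficient differences into $|X^\nu_s-X^{\tilde\nu}_s|+W_\delta(\nu_s,\tilde\nu_s)$; bounding $W_\delta(\mathcal L(X^\nu_t),\mathcal L(X^{\tilde\nu}_t))\le\big(\E[|X^\nu_t-X^{\tilde\nu}_t|^\delta]\big)^{1/\delta}$ and invoking the generalized Gronwall inequality gives, for every $t$,
\begin{equation*}
  \sup_{r\le t}W_\delta\big(\Phi(\nu)_r,\Phi(\tilde\nu)_r\big)^p\lesssim\int_0^t\psi(s,t)\,\sup_{r\le s}W_\delta(\nu_r,\tilde\nu_r)^p\dd s.
\end{equation*}
Iterating $\Phi$ — equivalently, running the Picard scheme $X^0\equiv X_0$, $X^{n+1}=$ the frozen solution with coefficients evaluated at $(X^n,\mathcal L(X^n))$ — the $n$-fold composition becomes a contraction, since the norm of the iterated Volterra kernel tends to $0$ as $n\to\infty$; Banach's fixed point theorem then yields a unique fixed point, i.e.\ the $L^p$-solution of~\eqref{eq:MVSVE}. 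Pathwise uniqueness follows from the same estimate applied to two $L^p$-solutions on a common probability space, forcing $\E[|X_t-\tilde X_t|^p]=0$ for all $t$, hence indistinguishability by continuity; and~\eqref{fin_moments} for general $q$ is the moment estimate above with $q$ in place of $p$, now using $X_0\in L^q$.

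The main obstacle throughout is the loss of the semimartingale and Markov structure of the solution, which renders both the Burkholder--Davis--Gundy inequality and Gronwall's lemma inapplicable in the time variable; they must be replaced, respectively, by Burkholder--Davis--Gundy applied at frozen times together with the Hölder splitting calibrated by $\delta$, and by a generalized Gronwall inequality for possibly singular Volterra kernels. What remains is the delicate bookkeeping keeping the kernel-integrability exponents $1+\epsilon,2+\epsilon$, the solution-moment exponent $\delta$, and the continuity exponent $p>1/\gamma$ mutually consistent — in particular ensuring that all $\delta$-th moments appearing in the estimates are finite, which is precisely where~\eqref{fin_moments} feeds back into the argument.
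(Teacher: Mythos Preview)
Your approach is essentially the paper's: freeze the law, solve the ordinary SVE via Section~\ref{sec:well-posedness of SVEs}, and run a Picard/contraction argument on $C([0,T];\mathcal P_\delta(\R^d))$, with the H\"older splitting calibrated by $\delta$ doing the work of separating kernel from coefficient. Two minor cleanups: first, after that H\"older step the kernel factors $\int_0^t|K_\sigma(s,t)|^{2+\epsilon}\dd s$ and $\int_0^t|K_\mu(s,t)|^{(4+2\epsilon)/(4+\epsilon)}\dd s$ are bounded \emph{uniformly in $t$} by Assumption~\ref{ass:kernels1}, so the estimate reduces to $\E[|X^\nu_t-X^{\tilde\nu}_t|^\delta]\lesssim\int_0^t\big(\E[|X^\nu_s-X^{\tilde\nu}_s|^\delta]+W_\delta(\nu_s,\tilde\nu_s)^\delta\big)\dd s$ with no residual Volterra kernel $\psi(s,t)$ --- ordinary Gronwall suffices, and the iteration gives the explicit factor $C^kT^k/k!$; second, the contraction estimate should carry the exponent $\delta$ throughout (not $p$), since that is the exponent matched to the kernel integrability and to the bound $W_\delta(\mathcal L(X^\nu_t),\mathcal L(X^{\tilde\nu}_t))^\delta\le\E[|X^\nu_t-X^{\tilde\nu}_t|^\delta]$.
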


Our second result is propagation of chaos for mean-field stochastic Volterra equations, i.e. we show that the unique $L^p$-solution to the mean-field stochastic Volterra equation~\eqref{eq:MVSVE} is the limit $N\to\infty$ of the solutions to the following system of $N$ mean-field stochastic Volterra equations
\begin{equation}\label{eq:X^n}
  X_t^{N,i}=X_0^i + \int_0^t K_\mu(s,t)\mu(s,X_s^{N,i},\bar{\rho}_s^N)\dd s + \int_0^t K_\sigma(s,t) \sigma(s,X_s^{N,i},\bar{\rho}_s^N)\dd B_s^i, \quad t\in[0,T],
\end{equation}
for $i\in\lbrace 1,\dots,N\rbrace$, where $\bar{\rho}_t^N:=\frac{1}{N}\sum\limits_{i=1}^N \delta_{X_t^{N,i}}$ is the empirical distribution of $(X_t^{N,i})_{i=1,\dots,N}$, $(X_0^i)_{i\in\N}\subset L^q(\Omega;\R^d)$ is a sequence of $\mathcal{F}_0$-measurable, independent and identically distributed random variables for some $q>4$, and $(B^i)_{i\in\N}$ is a sequence of independent $m$-dimensional Brownian motions, which are all defined on the given probability space $(\Omega,\mathcal{F},(\mathcal{F}_t)_{t\in [0,T]},\mathbb{P})$. Strong $L^p$-solutions, pathwise uniqueness in $L^p$ and well-posedness in $L^p$ for the system~\eqref{eq:X^n} of mean-field SVEs is defined analogously to \eqref{eq:MVSVE} and $\delta_x$ denotes the Dirac measure at $x$ for $x\in \R^d$. Moreover, for $i\in\N$, let $\underbar{X}^i$ be the solution of the mean-field SVE~\eqref{eq:MVSVE} with the initial condition~$X_0^i$ and driving Brownian motion~$B^i$. In the present multi-dimensional setting, we obtain the following convergence result.

\begin{theorem}[Volterra propagation of chaos]\label{thm:propagation}
  Suppose Assumption~\ref{ass:kernels1} and \ref{ass:coefficients1}, and that the sequence of initial conditions $(X_0^i)_{i\in\N}\subset L^q(\Omega;\R^d)$ for some $q>\max\lbrace p,2\delta \rbrace$ and $p>\max\lbrace \frac{1}{\gamma},1+\frac{2}{\epsilon} \rbrace$, where $\delta$ is defined in \eqref{eq:delta}. Then, the system~\eqref{eq:X^n} of mean-field SVEs is well-posed in $L^p$ for every $N\geq 1$, where the unique $L^p$-solution is denoted by $(X_t^{N,i})_{i=1,\dots,N}$. Moreover, it holds
  \begin{equation}\label{eq:Thm2}
    \lim\limits_{N\to\infty}\bigg( \max\limits_{1\leq i\leq N}\Big( \sup\limits_{ t \in [0,T]}\E[|X_t^{N,i}-\underbar{X}_t^i|^\delta] \Big) +\sup\limits_{ t \in [0,T]}\E\Big[ W_\delta\Big( \frac{1}{N}\sum\limits_{i=1}^N\delta_{X_t^{N,i}},\mathcal{L}(\underbar{X}_t^1) \Big)^\delta \Big] \bigg)=0.
  \end{equation}
\end{theorem}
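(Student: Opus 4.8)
The plan is to prove Theorem~\ref{thm:propagation} via a synchronous coupling argument. The well-posedness of the particle system~\eqref{eq:X^n} for each fixed $N$ follows by applying Theorem~\ref{thm:well-posedness} on the product space: the $\R^{dN}$-valued process $(X^{N,1},\dots,X^{N,N})$ solves a mean-field SVE on $\R^{dN}$ with block-diagonal kernels and coefficients built from $\mu,\sigma$ evaluated at the empirical measure; one checks that Assumptions~\ref{ass:kernels1} and~\ref{ass:coefficients1} are inherited, using that $\rho\mapsto\frac1N\sum\delta_{x^i}$ is Lipschitz from $(\R^{dN},|\cdot|)$ to $(\mathcal P_\delta(\R^d),W_\delta)$ and that $q>\max\{p,2\delta\}$ gives the required moment. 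For the convergence, I couple $X^{N,i}$ with the i.i.d.\ mean-field solutions $\underbar X^i$ driven by the same $X_0^i$ and $B^i$; set $D_t^{N,i}:=X_t^{N,i}-\underbar X_t^i$ and estimate $\E[|D_t^{N,i}|^\delta]$.

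The core estimate runs as follows. Writing the difference of the two equations and applying the elementary inequality $|a+b+c|^\delta\lesssim |a|^\delta+|b|^\delta+|c|^\delta$ (here $a=0$ since the initial conditions coincide), I split into a drift term and a diffusion term. For the drift term I use H\"older's inequality in $s$ with exponents $\frac{1+\epsilon}{\epsilon}$ and $1+\epsilon$ to pull out $\int_0^t|K_\mu(s,t)|^{1+\epsilon}\dd s\lesssim t^{\gamma(1+\epsilon)}$ (controlled by Assumption~\ref{ass:kernels1} with $t'=t$, $t=0$, after noting the integrability built into the solution) and are left with $\int_0^t\E[|\mu(s,X_s^{N,i},\bar\rho_s^N)-\mu(s,\underbar X_s^i,\mathcal L(\underbar X_s^1))|^{\delta'}]\dd s$ for the conjugate exponent; the Lipschitz assumption bounds the integrand by $\E[|D_s^{N,i}|^\delta]+\E[W_\delta(\bar\rho_s^N,\mathcal L(\underbar X_s^1))^\delta]$. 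For the diffusion term I cannot invoke Burkholder--Davis--Gundy uniformly in $t$ (this is exactly the source of the \emph{pointwise} nature of the result), so instead I use the It\^o isometry in $L^2$ combined with the $(2+\epsilon)$-integrability of $K_\sigma$: more precisely, I estimate $\E[|\int_0^t K_\sigma(s,t)(\sigma(s,X_s^{N,i},\bar\rho_s^N)-\sigma(s,\underbar X_s^i,\mathcal L(\underbar X_s^1)))\dd B_s^i|^\delta]$ by raising the It\^o integral to the power $\delta$ via the moment bound for stochastic integrals (Burkholder inequality \emph{at the fixed endpoint} $t$, which is permitted), then H\"older in $s$ with exponents $\frac{2+\epsilon}{\epsilon}$ and $\frac{2+\epsilon}{2}$ to separate $(\int_0^t|K_\sigma(s,t)|^{2+\epsilon}\dd s)^{\delta/(2+\epsilon)}\lesssim t^{\gamma\delta}$ from $(\int_0^t|\sigma(s,X_s^{N,i},\bar\rho_s^N)-\sigma(s,\underbar X_s^i,\mathcal L(\underbar X_s^1))|^2\dd s)^{\delta/2}$; a further application of H\"older in $s$ on the latter (using $\delta/2>1$) and Lipschitz continuity again yields $\int_0^t(\E[|D_s^{N,i}|^\delta]+\E[W_\delta(\bar\rho_s^N,\mathcal L(\underbar X_s^1))^\delta])\dd s$. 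Collecting the two contributions gives, uniformly in $i$,
\begin{equation*}
  \E[|D_t^{N,i}|^\delta]\lesssim \int_0^t \Big(\E[|D_s^{N,i}|^\delta]+\E[W_\delta(\bar\rho_s^N,\mathcal L(\underbar X_s^1))^\delta]\Big)\dd s.
\end{equation*}

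Next I control the Wasserstein term. By the triangle inequality for $W_\delta$ and the fact that $\frac1N\sum\delta_{X_s^{N,i}}$ and $\frac1N\sum\delta_{\underbar X_s^i}$ are coupled through the indices,
\begin{equation*}
  \E[W_\delta(\bar\rho_s^N,\mathcal L(\underbar X_s^1))^\delta] \lesssim \max_{1\le i\le N}\E[|D_s^{N,i}|^\delta] + \E\Big[W_\delta\Big(\tfrac1N\textstyle\sum_{i=1}^N\delta_{\underbar X_s^i},\mathcal L(\underbar X_s^1)\Big)^\delta\Big]=:\max_{1\le i\le N}\E[|D_s^{N,i}|^\delta]+\varepsilon_N(s),
\end{equation*}
where $\varepsilon_N(s)\to 0$ uniformly in $s\in[0,T]$ because the $\underbar X^i$ are i.i.d.\ with $\sup_s\E[|\underbar X_s^1|^q]<\infty$ for $q>2\delta$ (Theorem~\ref{thm:well-posedness}), so the quantitative Wasserstein law-of-large-numbers bound of Fournier--Guillin applies and gives a rate depending only on $\delta,d,q$; uniformity in $s$ comes from the uniform moment bound. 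Plugging this back, taking the maximum over $i$ and then the supremum over $t$ on both sides (the right-hand side is nondecreasing in $t$), I obtain
\begin{equation*}
  \max_{1\le i\le N}\sup_{t\in[0,T]}\E[|D_t^{N,i}|^\delta] \lesssim \int_0^T\Big(\max_{1\le i\le N}\sup_{r\le s}\E[|D_r^{N,i}|^\delta]\Big)\dd s + T\sup_{s\in[0,T]}\varepsilon_N(s),
\end{equation*}
and Gr\"onwall's inequality yields $\max_i\sup_t\E[|D_t^{N,i}|^\delta]\lesssim \sup_s\varepsilon_N(s)\to 0$. Feeding this into the Wasserstein estimate above shows the second term in~\eqref{eq:Thm2} also vanishes, completing the proof.

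The main obstacle I anticipate is the diffusion term: without a BDG inequality valid uniformly over $t\in[0,T]$ (the Volterra kernel $K_\sigma(s,t)$ depends on the endpoint $t$, so the process $t\mapsto\int_0^tK_\sigma(s,t)(\cdots)\dd B_s$ is not a martingale), one is forced to work at each fixed $t$ and carefully balance the H\"older exponents so that the kernel integrability from Assumption~\ref{ass:kernels1} (exponent $2+\epsilon$ for $K_\sigma$) matches the moment order $\delta$ — this is precisely why $\delta$ is chosen via~\eqref{eq:delta}--\eqref{eq:delta_Holder} and why one only gets the pointwise-in-$t$ statement~\eqref{eq:Thm2} rather than a supremum inside the expectation. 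A secondary technical point is verifying that the quantitative Wasserstein convergence rate for the empirical measure of the i.i.d.\ sample is genuinely uniform in $s\in[0,T]$, which I handle through the uniform moment bound~\eqref{fin_moments} with $q>2\delta$.
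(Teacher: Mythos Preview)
Your argument is correct and follows the paper's synchronous-coupling route: the same core estimate via BDG at a fixed endpoint $t$ and H\"older with the exponent pair $(\tfrac{2+\epsilon}{2},\tfrac{2+\epsilon}{\epsilon})$ tied to the definition~\eqref{eq:delta_Holder} of $\delta$, the same triangle inequality for $W_\delta$, and Gr\"onwall. Two tactical differences are worth recording. First, after the triangle inequality the paper replaces $\tfrac{1}{N}\sum_j\E[|X_s^{N,j}-\underbar{X}_s^j|^\delta]$ by $\E[|X_s^{N,1}-\underbar{X}_s^1|^\delta]$ via an exchangeability lemma (Lemma~\ref{lem:exchange}) showing that the coupled pairs $((X^{N,i},\underbar{X}^i))_{1\le i\le N}$ are identically distributed; you instead bound the average by $\max_j$ and carry the maximum through Gr\"onwall, which is shorter and bypasses that lemma entirely. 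Second, for the uniform-in-$s$ convergence of $\E[W_\delta(\tfrac{1}{N}\sum_j\delta_{\underbar{X}_s^j},\mathcal{L}(\underbar{X}_s^1))^\delta]$ the paper proceeds in two stages---pointwise convergence via Glivenko--Cantelli plus Vitali, then uniformity via an equicontinuity argument based on $\E[|\underbar{X}_t^1-\underbar{X}_s^1|^\delta]\lesssim|t-s|^{\beta\delta}$---whereas you invoke Fournier--Guillin directly, using that their constant depends on the law only through the $q$-th moment and that $\sup_s\E[|\underbar{X}_s^1|^q]<\infty$ for $q>2\delta$; this is quicker and is in fact how the paper later derives the rate in Lemma~\ref{lem:rates}. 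One minor terminological point: the $N$-particle system~\eqref{eq:X^n} is an \emph{ordinary} SVE on $\R^{dN}$ (the empirical measure is a function of the state, not of its law), so well-posedness comes from Lemma~\ref{lem:Lipsch_wp} rather than Theorem~\ref{thm:well-posedness}; your Lipschitz and growth verification is correct either way.
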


The rate of convergence in \eqref{eq:Thm2} is explicitly stated in the next lemma.

\begin{lemma}\label{lem:rates}
  Supposing the assumptions and notations of Theorem~\ref{thm:propagation}, it holds that
  \begin{equation}\label{eq:rates}
    \max\limits_{1\leq i\leq N}\Big( \sup\limits_{t \in [0,T]}\E[|X_t^{N,i}-\underbar{X}_t^i|^\delta] \Big) +\sup\limits_{t \in [0,T]}\E\Big[ W_\delta\Big( \frac{1}{N}\sum\limits_{i=1}^N\delta_{X_t^{N,i}},\mathcal{L}(\underbar{X}_t^1) \Big)^\delta \Big] \lesssim \varepsilon_N,
  \end{equation}
  where $(\varepsilon_N)_{N\in\N}$ is given by
  \begin{align}\label{def:varepsilon}
    \varepsilon_N = \Bigg\{\begin{array}{lr}
        N^{-1/2}, & \text{if } d<2\delta\\
        N^{-1/2}\log_2(1+N), & \text{if } d=2\delta\\
        N^{-\delta/d}, & \text{if } d>2\delta
    \end{array}.
  \end{align}
\end{lemma}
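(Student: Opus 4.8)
The plan is to extract the rate $\varepsilon_N$ directly from the quantitative estimates that already go into the proof of Theorem~\ref{thm:propagation}; the lemma is essentially a bookkeeping refinement of that argument, so I would organize the proof around the synchronous coupling and a single application of a Wasserstein law-of-large-numbers estimate. First I would recall the coupling: on the common probability space, $\underline{X}^i$ solves the mean-field SVE~\eqref{eq:MVSVE} with data $(X_0^i, B^i)$, so the $\underline{X}^i$ are i.i.d.\ copies of the unique solution $\underline{X}^1$, and in particular $\frac{1}{N}\sum_{i=1}^N \delta_{\underline{X}_t^i}$ is an empirical measure of $N$ i.i.d.\ samples from $\mathcal{L}(\underline{X}_t^1)$. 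Writing $e_N(t) := \max_{1\le i\le N} \E[|X_t^{N,i} - \underline{X}_t^i|^\delta]$, I would subtract the two equations, insert the intermediate empirical measure $\bar\nu_t^N := \frac{1}{N}\sum_{i=1}^N \delta_{\underline{X}_t^i}$, use $W_\delta(\bar\rho_t^N, \mathcal{L}(\underline{X}_t^1))^\delta \lesssim W_\delta(\bar\rho_t^N, \bar\nu_t^N)^\delta + W_\delta(\bar\nu_t^N, \mathcal{L}(\underline{X}_t^1))^\delta \lesssim \frac{1}{N}\sum_{i=1}^N |X_t^{N,i} - \underline{X}_t^i|^\delta + W_\delta(\bar\nu_t^N, \mathcal{L}(\underline{X}_t^1))^\delta$, and then bound the drift and stochastic Volterra integrals exactly as in the well-posedness argument (H\"older with the conjugate exponents from \eqref{eq:delta_Holder} against Assumption~\ref{ass:kernels1}, the Lipschitz bounds of Assumption~\ref{ass:coefficients1}, and for the stochastic term the moment estimate for Volterra stochastic integrals from Section~\ref{sec:well-posedness of SVEs}). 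This yields an inequality of the form
\begin{equation*}
  e_N(t) + \sup_{r\le t}\E\big[W_\delta(\bar\rho_r^N,\mathcal{L}(\underline{X}_r^1))^\delta\big]
  \lesssim \int_0^t (t-s)^{-\beta}\Big( e_N(s) + \sup_{r\le s}\E\big[W_\delta(\bar\rho_r^N,\mathcal{L}(\underline{X}_r^1))^\delta\big]\Big)\dd s
  + \sup_{s\le T}\E\big[W_\delta(\bar\nu_s^N,\mathcal{L}(\underline{X}_s^1))^\delta\big],
\end{equation*}
for some $\beta\in(0,1)$ depending on $\gamma,\epsilon,\delta$, where the last term is the only source of the $N$-dependence.

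Next I would control the driving term $\sup_{s\le T}\E[W_\delta(\bar\nu_s^N,\mathcal{L}(\underline{X}_s^1))^\delta]$. Since for each fixed $s$ the measure $\bar\nu_s^N$ is the empirical measure of $N$ i.i.d.\ $\R^d$-valued samples from $\mathcal{L}(\underline{X}_s^1)$, and $\sup_{s\le T}\E[|\underline{X}_s^1|^q]<\infty$ with $q>2\delta$ by Theorem~\ref{thm:well-posedness} (equation~\eqref{fin_moments}), I would invoke the Fournier--Guillin quantitative estimate for the Wasserstein distance of empirical measures (see \cite{Carmona2018}), which gives $\E[W_\delta(\bar\nu_s^N,\mathcal{L}(\underline{X}_s^1))^\delta] \lesssim \varepsilon_N$ with the rate $\varepsilon_N$ of \eqref{def:varepsilon}, the implicit constant being uniform in $s$ because it depends only on $d$, $\delta$, and the $q$-th moment, all of which are bounded uniformly in $s\in[0,T]$. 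This is precisely where the three regimes $d<2\delta$, $d=2\delta$, $d>2\delta$ and the condition $q>2\delta$ enter.

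Finally I would close the estimate by a singular Gr\"onwall argument: setting $\Phi_N(t) := e_N(t) + \sup_{r\le t}\E[W_\delta(\bar\rho_r^N,\mathcal{L}(\underline{X}_r^1))^\delta]$, the displayed inequality reads $\Phi_N(t) \lesssim \int_0^t (t-s)^{-\beta}\Phi_N(s)\dd s + \varepsilon_N$, and the fractional Gr\"onwall lemma (iterating the kernel, which is integrable since $\beta<1$; one may have to split $[0,T]$ into finitely many subintervals or iterate to make the kernel bounded, as is standard for Volterra-type Gr\"onwall inequalities) gives $\sup_{t\le T}\Phi_N(t) \lesssim \varepsilon_N$, which is exactly \eqref{eq:rates}. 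I expect the main obstacle to be making the constants genuinely uniform in $t$ simultaneously in all three terms — the supremum over $t$ sits both inside and outside the Gr\"onwall iteration — and getting the stochastic Volterra integral estimate to produce a kernel with exponent $\beta<1$ under the stated constraint $p>\max\{\tfrac1\gamma,1+\tfrac2\epsilon\}$ (equivalently, that the chosen moment $\delta$ is compatible with the kernel integrability); both of these, however, are already handled in the proofs of Theorems~\ref{thm:well-posedness} and~\ref{thm:propagation}, so the contribution of this lemma is to track the $\varepsilon_N$ through that machinery rather than to introduce new estimates.
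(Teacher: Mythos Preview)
Your proposal is correct and follows essentially the same route as the paper: synchronous coupling, the triangle-inequality splitting via the intermediate empirical measure of the $\underline{X}^i$, the Fournier--Guillin bound~\cite{Fournier2015} for $\E[W_\delta(\bar\nu_s^N,\mathcal{L}(\underline{X}_s^1))^\delta]$, and a Gr{\"o}nwall closure. The paper's version is just more economical: it directly plugs the Fournier--Guillin rate $\varepsilon_N$ into the already-established inequalities~\eqref{ineq:show0} and~\eqref{boundW1} from the proof of Theorem~\ref{thm:propagation}, so no Gr{\"o}nwall step is repeated.

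One simplification you should make: there is no singular kernel $(t-s)^{-\beta}$ in the Gr{\"o}nwall inequality here. With the H{\"o}lder pairing~\eqref{eq:delta_Holder} and Assumption~\ref{ass:kernels1}, the factors $(\int_0^t |K_\mu(s,t)|^{\frac{4+2\epsilon}{4+\epsilon}}\dd s)^{\frac{4+\epsilon}{\epsilon}}$ and $(\int_0^t |K_\sigma(s,t)|^{2+\epsilon}\dd s)^{\frac{4+2\epsilon}{\epsilon(2+\epsilon)}}$ are bounded uniformly in $t\in[0,T]$ (cf.~\eqref{eq:L1bound} and~\eqref{ineq1}), so the resulting inequality is $\E[|X_t^{N,i}-\underline{X}_t^i|^\delta]\lesssim\int_0^t(\cdots)\dd s$ with a \emph{bounded} kernel, and ordinary Gr{\"o}nwall suffices. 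The fractional Gr{\"o}nwall apparatus and the worry about ``making the kernel bounded by iteration'' are unnecessary.
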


\begin{remark}\label{rem:rate of convergence 1}
  The rates of convergence obtained in \eqref{def:varepsilon} are analogue to the classical rates for ordinary mean-field SDEs with Lipschitz coefficients (see \cite[Theorem~3.20]{Chaintron2022}), using $W_\delta(\cdots)^\delta$ instead of $W_2(\cdots)^2$ and, consequently, replacing the exponent $2/d$ by $\delta/d$ in \eqref{def:varepsilon}. Note that in the case of ordinary mean-field SDEs one obtains a pathwise propagation of chaos result (meaning that the $\sup$ in \eqref{eq:rates} is inside the expectation operators), which is a stronger type of convergence than the pointwise convergence presented in Theorem~\ref{thm:propagation}. This weaker type of convergence is caused by the missing availability of the standard Burkholder--Davis--Gundy inequality for the solutions of stochastic Volterra equations since they are, in general, not semimartingales. However, the rates of convergence provided in Lemma~\ref{lem:rates} seem to be optimal for synchronous coupling methods, since it is shown in \cite[Theorem~1 and there after]{Fournier2015} that for terms of the form $\E[W_\delta(\bar{\rho}_N,\rho)^\delta]$ the rates in \eqref{def:varepsilon} are sharp. Consequently, optimality could be only lost in the inequalities \eqref{ineq:convRHS} or \eqref{ineq:settingII2}, which, at least in general, appears not to be the case.
\end{remark}

\subsection{Mean-field SVEs with H{\"o}lder continuous diffusion coefficients}\label{subs:Holder}

In this subsection we consider mean-field SVEs in a one-dimensional setting, i.e. we assume $d=m=1$. This allows to relax the Lipschitz assumption on the diffusion coefficient~$\sigma$ to H{\"o}lder continuity in the space variable, provided that~$\sigma$ is independent of the distribution of the solution and that the kernels are sufficiently regular. More precisely, we consider the one-dimensional mean-field stochastic Volterra equation
\begin{equation}\label{eq:MVSVE2}
  X_t = X_0+\int_0^t K_{\mu}(s,t)\mu(s,X_s,\mathcal{L}(X_s))\dd s+\int_0^t K_{\sigma}(s,t)\sigma(s,X_s)\dd B_s,\quad t\in [0,T],
\end{equation}
where $(B_t)_{t\in[0,T]}$ is a one-dimensional Brownian motion, $X_0$ is an $\mathcal{F}_0$-measurable random variable, the coefficients $\mu\colon[0,T]\times\R\times\mathcal{P}_{p}(\R)\to\R$, $\sigma\colon[0,T]\times\R\to\R$ and the kernels $K_\mu, K_\sigma\colon \Delta_T\to \R$ are measurable functions. We consider two different sets of assumptions on the kernels and on the initial condition.

\begin{assumption}\label{ass:kernels2}
  Let $\gamma\in (0,\frac{1}{2}]$ and $\epsilon >0$. Let $X_0$ be an $\mathcal{F}_0$-measurable random variable and $K_\mu, K_\sigma\colon \Delta_T\to \R$ be continuous functions such that:
  \begin{enumerate}
    \item[(i)] $K_\mu(s,\cdot)$ is absolutely continuous for every $s\in [0,T]$ and $\partial_2 K_\mu $ is bounded on~$\Delta_T$;

    \item[(ii)] $K_\sigma(\cdot,t)$ is absolutely continuous for every $t\in [0,T]$, $K_\sigma(s,\cdot)$ is absolutely continuous for every $s\in [0,T]$ with $\partial_2 K_\sigma\in L^{2}(\Delta_T)$, and $\partial_2 K_\sigma(\cdot,t)$ is absolutely continuous for every $t\in[0,T]$. Furthermore, there is a constant $C_1>0$ such that $|K_\sigma(t,t)|\geq C_1$ for any $t\in [0,T]$, and there exists $C_2>0$ such that
    \begin{align*}
      &\int_0^s |K_{\sigma}(u,t)-K_{\sigma}(u,s)|^{2+\epsilon}\dd u \leq C_2|t-s|^{\gamma (2+\epsilon)}
      \text{ and}\\
      &|\partial_1 K_\sigma (s,t)| +|\partial_2 K_\sigma (s,s)| +\int_s^t |\partial_{21}K_\sigma(s,u)|\dd u\leq C_2
    \end{align*}
    hold for any $(s,t)\in\Delta_T$;

    \item[(iii)] $X_0 \in L^p(\Omega;\R)$ for $p>\max\lbrace \frac{1}{\gamma},1+\frac{2}{\epsilon} \rbrace$.
  \end{enumerate}
\end{assumption}

Instead of Assumption~\ref{ass:kernels2}, we can alternatively require $K_\mu,K_\sigma$ and $X_0$ to fulfill the following assumption, where the kernels are supposed to be convolutional.

\begin{assumption}\label{ass:kernels3}
  Let $X_0$ be an $\mathcal{F}_0$-measurable random variable and $K_\mu, K_\sigma\colon \Delta_T\to \R$ be continuous functions such that:
  \begin{enumerate}
    \item[(i)] $K_\mu(s,t)=K_\sigma(s,t)=\tilde{K}(t-s)$ for some $\tilde{K}\in C^1([0,T];\R)$;

    \item[(ii)] $X_0 \in L^p(\Omega;\R)$ for $p>2$.
  \end{enumerate}
\end{assumption}

Next, we formulate the assumptions on the coefficients.
  
\begin{assumption}\label{ass:coefficients2}
  Let $\mu\colon [0,T]\times\R\times\mathcal{P}_{1}(\R)\to\R $ and $\sigma\colon [0,T]\times\R\to\R$ be measurable functions such that:
  \begin{enumerate}
	\item[(i)] for any bounded set $\mathcal{K}\subset\mathcal{P}_{1}(\R)$, there is a constant $C_{\mathcal{K}}>0$, such that the linear growth condition
	\begin{equation*}
	  |\mu(t,x,\rho)|+|\sigma(t,x)|\leq C_\mathcal{K} \rho(1+|x|)
    \end{equation*}
	holds for all $\rho \in \mathcal{K}$, $t\in[0,T]$ and $x\in\R$;
    \item[(ii)] $\mu$ is Lipschitz continuous in $x$ and $\rho$ w.r.t. the $1$-Wasserstein distance, uniformly in $t$, i.e. there is a constant $C_{\mu}>0$ such that
    \begin{equation*}
      |\mu(t,x,\rho)-\mu(t,\tilde{x},\tilde{\rho})|\leq C_{\mu}\big(|x-\tilde{x}|+W_{1}(\rho,\tilde{\rho})\big),
    \end{equation*}
    holds for all $t\in [0,T]$, $x,\tilde{x}\in \R$ and $\rho,\tilde{\rho}\in \mathcal{P}_{1}(\R)$, and $\sigma$ is H{\"o}lder continuous of order $\frac{1}{2}+\xi$ for some $\xi\in [0,\frac{1}{2}]$ in $x$ uniformly in $t$, i.e. there is a constant $C_{\sigma}>0$ such that
    \begin{equation*}
      |\sigma(t,x)-\sigma(t,\tilde{x})|\leq C_{\sigma}|x-\tilde{x}|^{\frac{1}{2}+\xi},
    \end{equation*}
    holds for all $t\in [0,T]$ and $x,\tilde{x}\in \R$.
  \end{enumerate}
\end{assumption}

First, we establish the well-posedness of the mean-field stochastic Volterra equation~\eqref{eq:MVSVE2} with H{\"o}lder continuous diffusion coefficients. Its proof is based on a Yamada--Watanabe type approach~\cite{Yamada1971}, which requires essentially a one-dimensional setting and leads to the stronger assumptions on the kernels. Moreover, note that the H{\"o}lder continuous diffusion coefficients are required to be independent of the law of the solution, which is essentially a standard assumption for ordinary mean-field stochastic differential equations as it appears to be a necessary assumption to implement a Yamada--Watanabe type approach, cf. \cite{Kalinin2022}.

\begin{theorem}\label{thm:well-posedness2}
  Suppose Assumption~\ref{ass:coefficients2}, and the kernels $K_{\mu},K_{\sigma}$ and the initial condition~$X_0$ satisfy Assumption~\ref{ass:kernels2} or Assumption~\ref{ass:kernels3} with $p$ given as therein. Then, the mean-field stochastic Volterra equation~\eqref{eq:MVSVE2} is well-posed in $L^p$. Moreover, for any $q\geq p$, if $X_0\in L^q(\Omega;\R^d)$, the unique solution $X$ of~\eqref{eq:MVSVE2} satisfies
  \begin{equation*}
    \sup\limits_{t\in[0,T]}\E[|X_t|^{q}]<\infty.
  \end{equation*}
\end{theorem}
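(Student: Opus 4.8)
## Proof proposal for Theorem~\ref{thm:well-posedness2}

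The plan is to follow a Yamada--Watanabe type strategy in the Volterra setting, as developed in \cite{AbiJaber2019, Promel2023}, combined with the mean-field fixed-point argument. First I would establish existence of an $L^p$-solution. Fixing a candidate flow of marginal laws $(\rho_t)_{t\in[0,T]}$ in a suitable complete metric space (continuous curves in $\mathcal{P}_1(\R)$ with controlled moments), the map $t\mapsto\mu(t,x,\rho_t)$ becomes a genuine (law-free) coefficient, so \eqref{eq:MVSVE2} reduces to an ordinary one-dimensional SVE with H\"older diffusion coefficient and Lipschitz drift. The well-posedness results for such ordinary SVEs collected in Section~\ref{sec:well-posedness of SVEs} (under Assumption~\ref{ass:kernels2} or~\ref{ass:kernels3}) yield a unique $L^p$-solution $X^\rho$, which — using the moment bounds from those results together with Assumption~\ref{ass:kernels2}(i)/(iii) and linear growth — satisfies $\sup_{t}\E[|X_t^\rho|^q]<\infty$ whenever $X_0\in L^q$, and has a continuous modification. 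One then defines $\Phi(\rho)_t:=\mathcal{L}(X_t^\rho)$ and shows $\Phi$ is a contraction, or admits a fixed point after iteration, with respect to $\sup_{t\le\cdot}W_1(\cdot,\cdot)$. The key estimate here controls $W_1(\mathcal{L}(X_t^{\rho}),\mathcal{L}(X_t^{\tilde\rho}))^p\le\E[|X_t^\rho-X_t^{\tilde\rho}|^p]$; since only $\mu$ depends on the law and $\mu$ is Lipschitz in the $W_1$-distance, while $\sigma$ does not depend on the law at all, the H\"older roughness of $\sigma$ does \emph{not} obstruct this step — the difference $X^\rho-X^{\tilde\rho}$ solves a Volterra equation whose diffusion part is $K_\sigma(s,t)(\sigma(s,X_s^\rho)-\sigma(s,X_s^{\tilde\rho}))$, and one bounds its $L^p$-norm by a Gr\"onwall-type argument in the Volterra norm, picking up a factor proportional to $\int_0^t\sup_{u\le s}W_1(\rho_u,\tilde\rho_u)\dd s$ plus a term absorbed by the left side. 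Iterating the resulting inequality (or passing to a Picard iteration and using the contraction on small time intervals, then patching) gives a unique fixed point $\rho^*$, and $X^{\rho^*}$ is the desired $L^p$-solution of \eqref{eq:MVSVE2}; the moment bound \eqref{fin_moments}-type estimate is inherited from the ordinary-SVE step.

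For pathwise uniqueness, let $X,\tilde X$ be two $L^p$-solutions on the same probability space. The strategy is to apply the Yamada--Watanabe error-function technique directly to $Z_t:=X_t-\tilde X_t$. The subtlety special to mean-field SVEs is that $Z$ solves a Volterra equation in which the drift difference contains both a pointwise term $\mu(s,X_s,\mathcal{L}(X_s))-\mu(s,\tilde X_s,\mathcal{L}(\tilde X_s))$ — bounded by $C_\mu(|Z_s|+W_1(\mathcal{L}(X_s),\mathcal{L}(\tilde X_s)))\le C_\mu(|Z_s|+\E[|Z_s|])$ — and the diffusion difference $K_\sigma(s,t)(\sigma(s,X_s)-\sigma(s,\tilde X_s))$ with only $(1/2+\xi)$-H\"older $\sigma$. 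Following \cite{Promel2023}, one uses the regularity of $K_\sigma$ from Assumption~\ref{ass:kernels2}(ii) (respectively the $C^1$ convolution structure of Assumption~\ref{ass:kernels3}) to rewrite the stochastic Volterra integral via an integration-by-parts / stochastic Fubini argument so that $Z$ is, up to a sufficiently regular remainder, comparable to the solution of an It\^o SDE with diffusion coefficient $K_\sigma(s,s)\sigma(s,X_s)$; the lower bound $|K_\sigma(t,t)|\ge C_1$ is exactly what makes the Yamada--Watanabe nondegeneracy estimate work. Applying the standard Yamada--Watanabe sequence of smooth approximations $\varphi_n$ to $\E[\varphi_n(Z_t)]$ (or to $\E[\sup_{s\le t}\psi_n(Z_s)]$), the It\^o second-order term involving $\sigma$ is controlled because $\int_0^T|\sigma(s,X_s)-\sigma(s,\tilde X_s)|^2/(|Z_s|\log\text{-type weight})\dd s$ stays bounded by the H\"older hypothesis ($1/2+\xi\ge1/2$), while the drift and remainder terms are handled by $\E[|Z_s|]$ and a Gr\"onwall argument; one concludes $\E[|Z_t|]=0$ for all $t$, hence $\sup_{t}\E[|X_t-\tilde X_t|]=0$, which upgrades to $\mathbb{P}(X_t=\tilde X_t\ \forall t)=1$ by path-continuity.

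The main obstacle is the interplay between the Volterra memory structure and the Yamada--Watanabe argument under the mean-field coupling. In the ordinary-SDE case the Yamada--Watanabe estimate is local in time through It\^o's formula; here the quantity $Z_t$ at time $t$ depends on the whole past of $X,\tilde X$ through the kernels $K_\mu,K_\sigma$, so one cannot directly apply It\^o's formula to $\varphi_n(Z_t)$ as a process in $t$. The resolution — and the technically delicate point to get right — is the kernel-regularity reduction: showing that under Assumption~\ref{ass:kernels2}(ii) the process $Z$ differs from a genuine It\^o semimartingale (with nondegenerate diffusion $\gtrsim C_1$) by a remainder that is Hölder-continuous of order $>1/2$ in time and whose increments are controlled in $L^p$ using $p>\max\{1/\gamma,1+2/\epsilon\}$. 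Once this decomposition is in place, the mean-field term only adds the harmless $\E[|Z_s|]$ contribution, and the remaining estimates are routine extensions of \cite{Promel2023}; the moment bound follows as in Theorem~\ref{thm:well-posedness} by a Gr\"onwall argument in the Volterra norm using Assumption~\ref{ass:coefficients2}(i).
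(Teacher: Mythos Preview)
Your overall architecture---freeze the law $\rho$, solve the ordinary SVE, define $\Phi(\rho)=(\mathcal{L}(X_t^\rho))_t$, and look for a fixed point---is exactly the paper's. The gap is in your contraction estimate for~$\Phi$.

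You write that ``the H\"older roughness of $\sigma$ does not obstruct this step'' and propose to bound $\E[|X_t^\rho-X_t^{\tilde\rho}|^p]$ by a direct Gr\"onwall argument. This does not work. The diffusion contribution to the difference is $\int_0^t K_\sigma(s,t)\big(\sigma(s,X_s^\rho)-\sigma(s,X_s^{\tilde\rho})\big)\dd B_s$, and with only $(1/2+\xi)$-H\"older $\sigma$ the best moment bound is of order $\int_0^t \E[|X_s^\rho-X_s^{\tilde\rho}|^{p(1/2+\xi)}]\dd s$, which for $\xi<1/2$ does not close a Gr\"onwall loop and is certainly not ``absorbed by the left side''. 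The fact that $\sigma$ is law-free does not help: the obstruction is the pointwise H\"older exponent, not any law dependence.

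The paper's resolution is to run the Yamada--Watanabe machinery \emph{already at the contraction step}, not only for pathwise uniqueness. Concretely, one uses the kernel regularity (Assumption~\ref{ass:kernels2} or~\ref{ass:kernels3}) to write $X^\rho$ as a genuine It\^o semimartingale, applies It\^o's formula to $\phi_n(X_t^\rho-X_t^{\tilde\rho})$ with the Yamada--Watanabe approximants $\phi_n$, and sends $n\to\infty$. The second-order term involving $\sigma$ vanishes in the limit because $\phi_n''(x)\le \frac{2}{n|x|^{1+2\xi}}$ kills the $|X_s^\rho-X_s^{\tilde\rho}|^{1+2\xi}$ factor; what survives is
\[
  \E[|X_t^\rho-X_t^{\tilde\rho}|]\lesssim \int_0^t\big(\E[|X_s^\rho-X_s^{\tilde\rho}|]+W_1(\rho_s,\tilde\rho_s)\big)\dd s,
\]
and Gr\"onwall gives $\E[|X_t^\rho-X_t^{\tilde\rho}|]\lesssim\int_0^t W_1(\rho_s,\tilde\rho_s)\dd s$. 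Under Assumption~\ref{ass:kernels2} an auxiliary process $\tilde Y_t=\int_0^t(\sigma(s,X_s^\rho)-\sigma(s,X_s^{\tilde\rho}))\dd B_s$ enters through the $\partial_2 K_\sigma$ term and must be estimated jointly via integration by parts and the nondegeneracy $|K_\sigma(t,t)|\ge C_1$; this is where that lower bound is actually used. Note the estimate is at the level of the \emph{first} moment, which is why the fixed-point space is $C([0,T];\mathcal{P}_1(\R))$ with $W_1$, not $W_p$.

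Once this contraction is in place, the unique fixed point of $\Phi$ already gives both existence and pathwise uniqueness (any $L^p$-solution has law a fixed point, hence the common one, and then ordinary-SVE uniqueness with frozen law finishes). Your separate pathwise-uniqueness paragraph is essentially the same computation applied to two solutions rather than to $X^\rho,X^{\tilde\rho}$; it is correct in spirit but redundant once the contraction step is done properly.
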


Secondly, we establish propagation of chaos for one-dimensional stochastic mean-field SVEs with H{\"o}lder continuous diffusion coefficients. To that end, we consider the symmetric system of $N$ mean-field stochastic Volterra equations
\begin{equation}\label{eq:X^n2}
  X_t^{N,i}=X_0^i + \int_0^t K_\mu(s,t)\mu(s,X_s^{N,i},\bar{\rho}_s^N)\dd s + \int_0^t K_\sigma(s,t) \sigma(s,X_s^{N,i})\dd B_s^i, \quad t\in[0,T],
\end{equation}
for $i\in\lbrace 1,\dots,N\rbrace$, where $(X_0^i)_{i\in\N}\subset L^p(\Omega;\R)$ is an i.i.d. sequence of initial conditions, and $(B^i)_{i\in\N}$ is a sequence of independent one-dimensional Brownian motions. Moreover, for $i\in\N$, $\underbar{X}^i$ denotes the solution of the mean-field SVE~\eqref{eq:MVSVE2} with initial condition~$X_0^i$ and driving Brownian motion~$B^i$. In the present one-dimensional setting, we obtain the following convergence result.

\begin{theorem}[Volterra propagation of chaos]\label{thm:propagation2}
  Suppose Assumption~\ref{ass:coefficients2}, and the kernels $K_{\mu},K_{\sigma}$ and the initial conditions~$X_0^i$, for $i\in\N$, satisfy Assumption~\ref{ass:kernels2} or Assumption~\ref{ass:kernels3} with $p$ given as therein. Then, the system~\eqref{eq:X^n2} of mean-field SVEs is well-posed in $L^p$, where the unique $L^p$-solution is denoted by $(X_t^{N,i})_{i=1,\dots,N}$ for every $N\geq 1$. Moreover, it holds
  \begin{equation}\label{eq:Thm2ii}
    \lim\limits_{N\to\infty}\bigg( \max\limits_{1\leq i\leq N}\Big( \sup\limits_{ t\in [0,T]}\E[|X_t^{N,i}-\underbar{X}_t^i|] \Big) +\sup\limits_{t \in [0,T]}\E\Big[ W_1\Big( \frac{1}{N}\sum\limits_{i=1}^N\delta_{X_t^{N,i}},\mathcal{L}(\underbar{X}_t^1) \Big) \Big] \bigg)=0.
  \end{equation}
\end{theorem}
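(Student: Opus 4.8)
The plan is to couple the particle system~\eqref{eq:X^n2} synchronously with the mean-field copies $\underbar{X}^i$ (same initial datum $X_0^i$, same Brownian motion $B^i$), to control the resulting differences by combining the Yamada--Watanabe approximation underlying Theorem~\ref{thm:well-posedness2} with a Gronwall-type argument, and to identify the error coming from the interaction as the fluctuation of an empirical measure of i.i.d.\ samples. First, though, one has to establish well-posedness of~\eqref{eq:X^n2}: since the diffusion coefficient of the $i$-th equation depends only on $X^{N,i}$ and is driven by $B^i$ alone, while the interaction enters only through the Lipschitz drift via $\bar\rho^N$, the system is an $N$-dimensional stochastic Volterra equation with diagonal H\"older diffusion and Lipschitz drift, so strong existence and pathwise uniqueness in $L^p$ follow by combining the well-posedness theory of Section~\ref{sec:well-posedness of SVEs} with the one-dimensional argument of Theorem~\ref{thm:well-posedness2}, applied componentwise and summed over the particles. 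The standard a~priori estimates (linear growth from Assumption~\ref{ass:coefficients2}, the kernel bounds, Gronwall) additionally give $\sup_{1\le i\le N}\sup_{t\le T}\E[|X^{N,i}_t|^{q}]<\infty$ and $\sup_{t\le T}\E[|\underbar{X}^i_t|^{q}]<\infty$ for all $q\le p$, uniformly in $i$ and $N$ (uniformity in $N$ because the empirical measure's moments are controlled by the particles' moments).

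Fix $i$ and set $D^i_t:=X^{N,i}_t-\underbar{X}^i_t$, so $D^i_0=0$; by symmetry of~\eqref{eq:X^n2} and the i.i.d.\ assumption on $(X_0^i,B^i)_i$ the family $(D^i)_{1\le i\le N}$ is exchangeable, so it suffices to estimate $\sup_{t\le T}\E[|D^1_t|]$. Next I would reduce $D^i$ to an It\^o process, exactly as in the proof of Theorem~\ref{thm:well-posedness2}: under Assumption~\ref{ass:kernels2}, the identity $K_{\bullet}(s,t)=K_{\bullet}(s,s)+\int_s^t\partial_2 K_{\bullet}(s,u)\dd u$ together with the stochastic Fubini theorem expresses $X^{N,i}$, $\underbar{X}^i$ and hence $D^i$ as It\^o processes, with $D^i$ having diffusion coefficient $K_\sigma(s,s)\big(\sigma(s,X^{N,i}_s)-\sigma(s,\underbar{X}^i_s)\big)$; under Assumption~\ref{ass:kernels3}, integration by parts gives $X^{N,i}_t=X_0^i+\tilde K(0)U^{N,i}_t+\int_0^t\tilde K'(t-s)U^{N,i}_s\dd s$ with $U^{N,i}$ an It\^o process, so $D^i_t=\tilde K(0)V^i_t+\int_0^t\tilde K'(t-s)V^i_s\dd s$ with $V^i$ an It\^o process with diffusion coefficient $\sigma(s,X^{N,i}_s)-\sigma(s,\underbar{X}^i_s)$ and $|D^i_t|\lesssim|V^i_t|+\int_0^t|V^i_s|\dd s$. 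In either case, applying It\^o's formula to the Yamada--Watanabe approximants $\phi_n$ (adapted to the modulus $u\mapsto u^{1/2+\xi}$) along $D^i$, respectively $V^i$, and taking expectations, the stochastic integral vanishes in mean, the second-order term is absorbed via $\phi_n''(x)|x|^{1+2\xi}\le 2/n$ together with $|\sigma(s,X^{N,i}_s)-\sigma(s,\underbar{X}^i_s)|^2\le C_\sigma^2|D^i_s|^{1+2\xi}$ (as in Theorem~\ref{thm:well-posedness2}), and the drift term, by the Lipschitz property in Assumption~\ref{ass:coefficients2}, is $\lesssim\int_0^t\E[|D^i_s|]\dd s+\int_0^t\E\big[W_1(\bar\rho^N_s,\mathcal L(\underbar{X}^1_s))\big]\dd s$.

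To handle the interaction term I would use $W_1(\bar\rho^N_s,\mathcal L(\underbar{X}^1_s))\le\frac1N\sum_{j=1}^N|D^j_s|+W_1\big(\frac1N\sum_{j=1}^N\delta_{\underbar{X}^j_s},\mathcal L(\underbar{X}^1_s)\big)$: by exchangeability the first term has expectation $\E[|D^1_s|]$, while the second is the fluctuation of the empirical measure of the $N$ i.i.d.\ variables $\underbar{X}^1_s,\dots,\underbar{X}^N_s$ around their law, so that, using $\sup_{s\le T}\E[|\underbar{X}^1_s|^p]<\infty$ and the Fournier--Guillin estimate~\cite{Fournier2015} in dimension one, $\varepsilon_N:=\sup_{s\le T}\E\big[W_1(\frac1N\sum_j\delta_{\underbar{X}^j_s},\mathcal L(\underbar{X}^1_s))\big]\to 0$ as $N\to\infty$. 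Letting $n\to\infty$ in the Yamada--Watanabe estimate, the second-order and approximation errors drop out and one obtains a closed Gronwall-type inequality for $t\mapsto\E[|D^1_t|]$ with inhomogeneity $C\varepsilon_N$ and a nonlinearity of Osgood/Bihari class (linear when $\xi=\tfrac12$); the Bihari--LaSalle inequality then gives $\max_{1\le i\le N}\sup_{t\le T}\E[|X^{N,i}_t-\underbar{X}^i_t|]\le\beta(\varepsilon_N)$ with $\beta(0^+)=0$, whence this quantity tends to $0$. The second term in~\eqref{eq:Thm2ii} is then controlled by the triangle inequality for $W_1$ and exchangeability: uniformly in $t\le T$,
\begin{equation*}
  \E\Big[W_1\Big(\tfrac1N\sum_{i=1}^N\delta_{X^{N,i}_t},\,\mathcal L(\underbar{X}^1_t)\Big)\Big]\le\frac1N\sum_{i=1}^N\E[|D^i_t|]+\varepsilon_N=\E[|D^1_t|]+\varepsilon_N\xrightarrow[N\to\infty]{}0 .
\end{equation*}

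The step I expect to be the main obstacle is making the Yamada--Watanabe argument genuinely compatible with the Volterra dynamics. Under Assumption~\ref{ass:kernels3} the approximant $\phi_n$ must be evaluated along the auxiliary It\^o process $V^i$, whereas the H\"older control on the diffusion difference is expressed through $|D^i_s|\lesssim|V^i_s|+\int_0^s|V^i_r|\dd r$ rather than through $|V^i_s|$ alone; showing that the resulting ``memory'' contribution to the second-order term is still negligible as $n\to\infty$ is the delicate point, and it is here that the kernel regularity in Assumptions~\ref{ass:kernels2}--\ref{ass:kernels3} and the pointwise (in $t$) nature of the statement enter. A second, lesser, point is to verify that the final Gronwall inequality is indeed of Osgood/Bihari type, so that the positive inhomogeneity $\varepsilon_N$ is propagated to a bound that vanishes as $N\to\infty$, rather than merely forcing $D^i\equiv 0$ as in the uniqueness proof of Theorem~\ref{thm:well-posedness2}.
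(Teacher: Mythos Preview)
Your overall strategy matches the paper's: well-posedness of~\eqref{eq:X^n2} via the diagonal structure (this is Remark~\ref{rem:multi-dimVolterra}), synchronous coupling, Yamada--Watanabe approximation applied to the semimartingale decomposition of the difference, exchangeability of $((X^{N,i},\underbar X^i))_i$, and control of the empirical-measure fluctuation of the i.i.d.\ copies $\underbar X^j$. The two obstacles you anticipate, however, are self-inflicted.

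Under Assumption~\ref{ass:kernels3} there is no need to apply $\phi_n$ along the auxiliary process $V^i$ and then worry about the mismatch between $\phi_n''(V^i_s)$ and $|D^i_s|^{1+2\xi}$. The difference $D^i$ is \emph{itself} a semimartingale,
\begin{equation*}
  D^i_t=\int_0^t\tilde K(0)\,\dd V^i_s+\int_0^t\Big(\int_0^s\tilde K'(s-u)\,\dd V^i_u\Big)\dd s,
\end{equation*}
and the paper applies It\^o's formula for $\phi_n$ directly along $D^i$, exactly as you already propose under Assumption~\ref{ass:kernels2}. The second-order term is then $\frac12\int_0^t\phi_n''(D^i_s)\,\tilde K(0)^2\big(\sigma(s,X^{N,i}_s)-\sigma(s,\underbar X^i_s)\big)^2\dd s\lesssim 1/n$ by the H\"older bound and $\phi_n''(x)\le 2/(n|x|^{1+2\xi})$, with no ``memory'' complication. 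Consequently the limiting inequality is \emph{linear}, not of Bihari--Osgood type: the H\"older nonlinearity lives only in the quadratic-variation term and disappears as $n\to\infty$, so ordinary Gronwall suffices and propagates the inhomogeneity $\varepsilon_N$ linearly.

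The piece that does require care---and which you subsume under ``the drift term''---is the finite-variation contribution coming from $\int_0^s\partial_2K_\sigma(u,s)\big(\sigma(u,X^{N,i}_u)-\sigma(u,\underbar X^i_u)\big)\dd B^i_u$ (respectively $\tilde K'(s-u)$). The paper closes this not by a nonlinear estimate but by augmenting the Gronwall variable to $M^{N,i}(t):=\E[|D^i_t|]+\E[|\tilde Y^i_t|]$ with $\tilde Y^i_t:=\int_0^t\big(\sigma(s,X^{N,i}_s)-\sigma(s,\underbar X^i_s)\big)\dd B^i_s$: that term is bounded by $\int_0^t\E[|\tilde Y^i_s|]\dd s$ via integration by parts and the bounds in Assumption~\ref{ass:kernels2}~(ii), and $\E[|\tilde Y^i_t|]$ is re-expressed in terms of $\E[|D^i_t|]$ using $|K_\sigma(t,t)|\ge C_1>0$. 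Linear Gronwall for $M^{N,i}$ then gives $\E[|D^i_t|]\lesssim\int_0^t\E\big[W_1(\tfrac1N\sum_j\delta_{\underbar X^j_s},\mathcal L(\underbar X^1_s))\big]\dd s$, after which your Steps~3--4 go through verbatim.
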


The rate of convergence in \eqref{eq:Thm2ii} is explicitly stated in the next lemma.

\begin{lemma}\label{lem:rates2}
  Supposing the assumptions and notation of Theorem~\ref{thm:propagation2}, it holds that
  \begin{equation}\label{eq:rates2}
    \max\limits_{1\leq i\leq N}\Big( \sup\limits_{t\in [0,T]}\E[|X_t^{N,i}-\underbar{X}_t^i|] \Big) +\sup\limits_{t\in [0,T]}\E\Big[ W_1\Big( \frac{1}{N}\sum\limits_{i=1}^N\delta_{X_t^{N,i}},\mathcal{L}(\underbar{X}_t^1) \Big) \Big] \lesssim N^{-1/2}.
  \end{equation}
\end{lemma}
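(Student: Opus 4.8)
The plan is to run, along a synchronous coupling of the particle system with the limiting equation, the Yamada--Watanabe scheme underlying Theorem~\ref{thm:well-posedness2}, upgrading its pathwise-uniqueness estimate into a quantitative one whose inhomogeneity is the empirical-measure error. Well-posedness of~\eqref{eq:X^n2} and of the $\underbar{X}^i$, together with the uniform moment bounds $\sup_{t\in[0,T]}\big(\E[|X_t^{N,i}|^q]+\E[|\underbar{X}_t^i|^q]\big)\lesssim 1$ for $q\in[1,p]$, follow as in Theorem~\ref{thm:well-posedness2} applied coordinatewise ($\sigma$ is one-dimensional and law-independent, the law-dependence of $\mu$ is Lipschitz) and are part of Theorem~\ref{thm:propagation2}. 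Couple $X^{N,i}$ and $\underbar{X}^i$ through the common data $(X_0^i,B^i)$ and set $Z_t^i:=X_t^{N,i}-\underbar{X}_t^i$, $\Delta\mu_s^i:=\mu(s,X_s^{N,i},\bar{\rho}_s^N)-\mu(s,\underbar{X}_s^i,\mathcal{L}(\underbar{X}_s^1))$ and $\Delta\sigma_s^i:=\sigma(s,X_s^{N,i})-\sigma(s,\underbar{X}_s^i)$. By exchangeability of $(Z^1,\dots,Z^N)$ it suffices to estimate $f(t):=\E[|Z_t^1|]$ and $\sup_{t\in[0,T]}\E[W_1(\bar{\rho}_t^N,\mathcal{L}(\underbar{X}_t^1))]$.

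First I would turn $Z^i$ into a continuous semimartingale, exactly as in the proof of Theorem~\ref{thm:well-posedness2}: under Assumption~\ref{ass:kernels2} (respectively Assumption~\ref{ass:kernels3}), absolute continuity of the kernels in the upper time variable plus a stochastic Fubini argument yield, with $Y_t^i:=\int_0^t\Delta\sigma_s^i\dd B_s^i$,
\[
  \d Z_t^i=b_t^i\dd t+K_\sigma(t,t)\Delta\sigma_t^i\dd B_t^i,\qquad |b_t^i|\lesssim |\Delta\mu_t^i|+\int_0^t|\Delta\mu_s^i|\dd s+|Y_t^i|+\int_0^t|Y_s^i|\dd s,
\]
the drift bound using the kernel estimates in Assumption~\ref{ass:kernels2}. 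Applying It{\^o}'s formula to $\phi_n(Z_t^i)$ for the classical Yamada--Watanabe functions $\phi_n\in C^2$ (with $|\phi_n'|\le1$, $0\le|x|-\phi_n(x)\le a_{n-1}\downarrow0$, and $0\le\phi_n''(x)\le\frac{2}{n|x|}$ for $x\neq0$), the stochastic integral is a true martingale by the moment bounds, and by the $(\frac12+\xi)$-H{\"o}lder continuity of $\sigma$, that is $|\Delta\sigma_s^i|^2\le C_\sigma^2|Z_s^i|^{1+2\xi}$, together with $|K_\sigma(s,s)|\le\|K_\sigma\|_\infty$,
\[
  \tfrac12\E\Big[\int_0^t\phi_n''(Z_s^i)K_\sigma(s,s)^2|\Delta\sigma_s^i|^2\dd s\Big]\le\frac{\|K_\sigma\|_\infty^2 C_\sigma^2}{n}\int_0^t\E[|Z_s^i|^{2\xi}]\dd s\longrightarrow0\quad(n\to\infty),
\]
since $2\xi\le1$ and the moments of $Z^i$ are bounded. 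Taking expectations and using $\E[|Z_t^i|]=\lim_n\E[\phi_n(Z_t^i)]$ gives $f(t)\le\E\big[\int_0^t|b_s^i|\dd s\big]$.

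It remains to close a Gr{\"o}nwall loop. The decisive point is that $|K_\sigma(t,t)|\ge C_1>0$ turns the identity relating $Z^i$ and $Y^i$ from the Fubini step into a linear Volterra equation for $Y^i$ with bounded kernel, whence a resolvent/Gr{\"o}nwall argument gives $\E[|Y_t^i|]\lesssim\E[|Z_t^i|]+\int_0^t\E[|Z_s^i|]\dd s+\int_0^t\E[|\Delta\mu_s^i|]\dd s$; inserting this into the bound for $\E\big[\int_0^t|b_s^i|\dd s\big]$ leaves only $\Delta\mu$-terms and $\E[|Z_s^i|]$. Using $\E[|\Delta\mu_s^i|]\le C_\mu\big(\E[|Z_s^i|]+\E[W_1(\bar{\rho}_s^N,\mathcal{L}(\underbar{X}_s^1))]\big)$ and, via the empirical coupling and exchangeability, $\E[W_1(\bar{\rho}_s^N,\mathcal{L}(\underbar{X}_s^1))]\le\frac{1}{N}\sum_{j=1}^N\E[|Z_s^j|]+c_N=f(s)+c_N$ with $c_N:=\sup_{s\in[0,T]}\E\big[W_1\big(\frac{1}{N}\sum_{j=1}^N\delta_{\underbar{X}_s^j},\mathcal{L}(\underbar{X}_s^1)\big)\big]$, and absorbing iterated time-integrals of $f$ into $\int_0^t f(s)\dd s$, one arrives at $f(t)\lesssim\int_0^t f(s)\dd s+c_N$ on $[0,T]$; Gr{\"o}nwall then gives $\sup_{t\in[0,T]}f(t)\lesssim c_N$, hence also $\sup_{t\in[0,T]}\E[W_1(\bar{\rho}_t^N,\mathcal{L}(\underbar{X}_t^1))]\le\sup_{t\in[0,T]}f(t)+c_N\lesssim c_N$. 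Finally $c_N\lesssim N^{-1/2}$: in dimension one $W_1(\bar{\nu}_N,\nu)=\int_{\R}|\bar{F}_N(x)-F(x)|\dd x$, so $\E\big[W_1\big(\frac{1}{N}\sum_{j}\delta_{\underbar{X}_t^j},\mathcal{L}(\underbar{X}_t^1)\big)\big]\le N^{-1/2}\int_{\R}\sqrt{F_t(x)(1-F_t(x))}\dd x$, and the last integral is bounded uniformly in $t$ because $\sup_{t\in[0,T]}\E[|\underbar{X}_t^1|^p]<\infty$ with $p>2$ (Theorem~\ref{thm:well-posedness2}); together these estimates give~\eqref{eq:rates2}.

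The main obstacle is the semimartingale reduction of $Z^i$ and, relatedly, checking that the resulting Gr{\"o}nwall inequality is linear in $f$ rather than spoiled by a non-Lipschitz power of $f$ --- a crude $L^2$-bound on $\E[|Y_t^i|]$ would produce a term $(\int_0^t f(s)\dd s)^{1/2}$ and fail; avoiding this is precisely where the kernel smoothness and the non-degeneracy $|K_\sigma(t,t)|\ge C_1$ are used. It is also the reason the statement is only pointwise, with the supremum in time outside the expectations: no Burkholder--Davis--Gundy bound is available for the Volterra error process while $\sigma$ is merely H{\"o}lder. For $\xi>0$ the second-order term above decays even faster, so the whole argument is uniform in $\xi\in[0,\frac12]$.
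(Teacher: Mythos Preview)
Your proposal is correct and follows essentially the same route as the paper. The Yamada--Watanabe/Gr{\"o}nwall reduction you carry out is precisely the content of Step~2 in the paper's proof of Theorem~\ref{thm:propagation2}, leading to inequality~\eqref{ineq:convRHS}; since Lemma~\ref{lem:rates2} assumes the notation and assumptions of that theorem, the paper's own proof simply quotes~\eqref{ineq:convRHS} and~\eqref{ineq:settingII2} rather than re-deriving them. The one genuine difference is your bound on $c_N=\sup_{s}\E\big[W_1\big(\frac{1}{N}\sum_j\delta_{\underbar{X}_s^j},\mathcal{L}(\underbar{X}_s^1)\big)\big]$: the paper invokes the Fournier--Guillin estimate (their Lemma~\ref{lem1}), whereas you use the one-dimensional identity $W_1(\bar\nu_N,\nu)=\int_{\R}|\bar F_N-F|$ together with $\E|\bar F_N(x)-F(x)|\le N^{-1/2}\sqrt{F(x)(1-F(x))}$ and the fact that $\sup_t\E[|\underbar X_t^1|^p]<\infty$ with $p>2$ makes $\int\sqrt{F_t(1-F_t)}$ uniformly finite. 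Your argument is more elementary and self-contained (no black-box empirical-measure result), at the cost of being specific to $d=1$; the paper's citation works in any dimension but is heavier machinery. Either way the conclusion $c_N\lesssim N^{-1/2}$ is the same, and plugging it into the Gr{\"o}nwall output yields~\eqref{eq:rates2}.
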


\begin{remark}
  The rate of convergence in \eqref{eq:rates2} is expected to be optimal for synchronous coupling methods, cf. Remark~\ref{rem:rate of convergence 1}, since it is shown in \cite[Theorem~1 and there after]{Fournier2015} that for terms of the form $\E[W_1(\bar{\rho}_N,\rho)]$ the rate is sharp. Consequently, optimality could be only lost in the inequalities~\eqref{ineq:show0} or \eqref{boundW1}.
\end{remark}

\section{On the well-posedness of ordinary stochastic Volterra equations}\label{sec:well-posedness of SVEs}

In this section, we provide various well-posedness results for ordinary stochastic Volterra equations with random initial conditions that are needed to prove the well-posedness results for mean-field stochastic Volterra equations presented in Section~\ref{sec:main result}. We start with SVEs with Lipschitz continuous coefficients, which is a slight modification of \cite[Theorem~1.1]{Wang2008}.

\begin{lemma}\label{lem:Lipsch_wp}
  Let the kernels $K_\mu, K_\sigma$ fulfill Assumption~\ref{ass:kernels1}, $p>\max\lbrace  \frac{1}{\gamma}, 1+\frac{2}{\epsilon}\rbrace$ with $\gamma\in (0,\frac{1}{2}]$ and $\epsilon>0$ from Assumption~\ref{ass:kernels1}, the initial value $X_0\in L^p(\Omega;\R^d)$ be adapted, and the measurable coefficients $\mu\colon [0,T]\times\R^d\to \R^d$ and $\sigma\colon [0,T]\times\R^d\to \R^{d\times m}$ for some $d,m\in\N$ fulfill the linear growth condition
  \begin{equation*}
    |\mu(t,x)| + |\sigma(t,x)|\leq C_{\mu,\sigma}(1+|x|),
  \end{equation*}
  for some $C_{\mu,\sigma}>0$ and all $t\in[0,T]$, $x\in\R^d$, and the Lipschitz condition
  \begin{equation*}
    |\mu(t,x)-\mu(t,y)| + |\sigma(t,x)-\sigma(t,y)| \leq C_{\mu,\sigma}|x-y|,
  \end{equation*}
  for some $C_{\mu,\sigma}>0$ and all $t\in[0,T]$, $x,y\in\R^d$. Then, the $d$-dimensional stochastic Volterra equation
  \begin{equation*}
    X_t=X_0+\int_0^t K_\mu(s,t)\mu(s,X_s)\dd s+ \int_0^t K_\sigma(s,t)\sigma(s,X_s)\dd B_s,\quad t\in[0,T],
  \end{equation*}
  is well-posed in $L^p$, where $(B_t)_{t\in[0,T]}$ is an $m$-dimensional Brownian motion.
\end{lemma}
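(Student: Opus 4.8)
The plan is to run a classical Picard iteration in a space of progressively measurable processes with uniformly bounded $p$-th moments, exactly as in \cite[Theorem~1.1]{Wang2008}; the only modification is that $X_0$ is allowed to be an arbitrary $\mathcal{F}_0$-measurable element of $L^p(\Omega;\R^d)$, which is harmless, since it is the common datum of the two solutions compared for uniqueness and contributes only the finite constant $(\E[|X_0|^p])^{1/p}$ to every existence bound. Concretely, I would work in the space $\mathcal{S}^p$ of $(\mathcal{F}_t)_{t\in[0,T]}$-progressively measurable processes $X$ with $\|X\|_{\mathcal{S}^p}:=\sup_{t\in[0,T]}(\E[|X_t|^p])^{1/p}<\infty$, study the map
\begin{equation*}
  (\Phi X)_t:=X_0+\int_0^t K_\mu(s,t)\mu(s,X_s)\dd s+\int_0^t K_\sigma(s,t)\sigma(s,X_s)\dd B_s,\qquad t\in[0,T],
\end{equation*}
and note first that $p>1/\gamma\ge 2$, so in particular $p/2>1$.

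The whole argument hinges on one estimate. For $X,Y\in\mathcal{S}^p$, put $g_t:=(\E[|X_t-Y_t|^p])^{1/p}$, let $\delta=\tfrac{4+2\epsilon}{\epsilon}$ be as in \eqref{eq:delta}, and set $\delta_\mu:=\tfrac{1+\epsilon}{\epsilon}$. Applying to $(\Phi X)_t-(\Phi Y)_t$ the Burkholder--Davis--Gundy inequality on the stochastic integral, then Minkowski's integral inequality in $L^{p/2}(\Omega)$ (which moves the $\omega$-norm inside the $\dd s$-integral and hence never raises $X$ to a power exceeding $p$), then the Lipschitz bounds on $\mu$ and $\sigma$, and finally H\"older's inequality in $s$ with exponents $\big(\tfrac{1+\epsilon}{1},\tfrac{1+\epsilon}{\epsilon}\big)$ on the drift term and $\big(\tfrac{2+\epsilon}{2},\tfrac{2+\epsilon}{\epsilon}\big)$ on the diffusion term, bounding the resulting pure-kernel integrals via Assumption~\ref{ass:kernels1} taken with lower endpoint $0$, one gets
\begin{equation*}
  \big(\E[|(\Phi X)_t-(\Phi Y)_t|^p]\big)^{1/p}\ \lesssim\ \Big(\int_0^t g_s^{\delta_\mu}\dd s\Big)^{1/\delta_\mu}+\Big(\int_0^t g_s^{\delta}\dd s\Big)^{1/\delta}.
\end{equation*}
Since $\delta_\mu<\delta$, H\"older's inequality in time on $[0,T]$ absorbs the first term into the second, and raising to the power $\delta$ yields the Gr\"onwall-type inequality $(\E[|(\Phi X)_t-(\Phi Y)_t|^p])^{\delta/p}\lesssim\int_0^t g_s^{\delta}\dd s$. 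The same chain of inequalities with $Y\equiv 0$ and the linear growth bound in place of the Lipschitz bound shows $\Phi(\mathcal{S}^p)\subseteq\mathcal{S}^p$ together with $(\E[|(\Phi X)_t|^p])^{\delta/p}\lesssim 1+(\E[|X_0|^p])^{\delta/p}+\int_0^t(\E[|X_s|^p])^{\delta/p}\dd s$.

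For existence I would iterate $X^{(0)}\equiv X_0$, $X^{(n+1)}:=\Phi X^{(n)}$: the growth inequality and induction give $\sup_n\|X^{(n)}\|_{\mathcal{S}^p}<\infty$, while the Gr\"onwall-type inequality and induction give $\E[|X^{(n+1)}_t-X^{(n)}_t|^p]\lesssim (Ct)^n/n!$, so $(X^{(n)}_t)_n$ converges in $L^p(\Omega;\R^d)$ uniformly in $t\in[0,T]$ to some $X\in\mathcal{S}^p$. From Assumption~\ref{ass:kernels1} and the Kolmogorov continuity criterion (this is where $p>1/\gamma$ enters) the process $X$ admits a continuous modification, hence is progressively measurable; passing to the limit in $X^{(n+1)}=\Phi X^{(n)}$, which is legitimate because $\Phi$ is Lipschitz from $\mathcal{S}^p$ to $\mathcal{S}^p$ by the estimate above, shows that $X$ solves the equation, the integrability condition required of a solution following from the moment bound and Assumption~\ref{ass:kernels1}. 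For uniqueness, any two $L^p$-solutions $X,\tilde{X}$ on the same space both lie in $\mathcal{S}^p$ by the a priori estimate, so $g_t:=(\E[|X_t-\tilde{X}_t|^p])^{1/p}$ satisfies $g_t^{\delta}\lesssim\int_0^t g_s^{\delta}\dd s$ and hence vanishes by Gr\"onwall's lemma; as both solutions admit continuous modifications, ``$X_t=\tilde{X}_t$ almost surely for each fixed $t$'' upgrades to $\mathbb{P}(X_t=\tilde{X}_t\text{ for all }t\in[0,T])=1$, i.e.\ pathwise uniqueness in $L^p$.

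The crux is the displayed key estimate: the singular kernels must be separated from the coefficients without ever raising $X$ to a power above $p$, which is precisely what forces the use of Minkowski's integral inequality instead of a direct H\"older split of kernel and coefficient, and two different time-exponents must be carried along, namely $\delta_\mu$ coming from the $(1+\epsilon)$-integrability of $K_\mu$ and $\delta$ from the $(2+\epsilon)$-integrability of $K_\sigma$, with the exact value in \eqref{eq:delta} being what makes the diffusion estimate close. Everything else is routine: the growth estimate and the Picard induction are standard, and the Kolmogorov modification step, needed only so that pathwise uniqueness can be phrased ``for all $t$'' and into which the hypothesis $p>1/\gamma$ feeds, is classical.
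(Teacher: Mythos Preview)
Your proposal is correct and follows essentially the same approach as the paper, which simply states that the Picard iteration and Gr{\"o}nwall estimates in \cite[Theorem~1.1]{Wang2008} adapt straightforwardly to random initial conditions $X_0\in L^p(\Omega;\R^d)$. You have in fact written out considerably more detail than the paper does, and the key estimate---separating kernel and coefficient via Minkowski's integral inequality in $L^{p/2}(\Omega)$ before applying H{\"o}lder in time with the exponents dictated by Assumption~\ref{ass:kernels1}---is exactly the right mechanism. One small point: the claim that any $L^p$-solution automatically lies in $\mathcal{S}^p$ (``by the a priori estimate'') requires a short additional argument, since $X\in L^p(\Omega\times[0,T])$ does not immediately yield $\sup_t\E[|X_t|^p]<\infty$; the paper handles this in Remark~3.5 by invoking \cite[Lemma~3.4]{Promel2023}, and your growth inequality together with $\int_0^T\E[|X_s|^p]\dd s<\infty$ is enough to close this loop.
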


\begin{proof}
  With the assumed integrability on $X_0$, it is straightforward to adapt the Picard iteration and the Gr{\"o}nwall type estimates in proof of \cite[Theorem~1.1]{Wang2008} to allow for random initial conditions~$X_0$, as stated in Lemma~\ref{lem:Lipsch_wp}.
\end{proof}

For one-dimensional ordinary stochastic Volterra equations the Lipschitz assumption on the diffusion coefficients can be relaxed to H{\"o}lder continuity, provided the kernels are sufficiently regular or have a convolutional structure. The next results are a slight modification of \cite[Theorem~2.3]{Promel2023}, allowing for SVEs with random initial conditions.

\begin{lemma}\label{lem:Hold_wp}
  Let the kernels $K_\mu, K_\sigma$ fulfill Assumption~\ref{ass:kernels2}, $p>\max\lbrace  \frac{1}{\gamma}, 1+\frac{2}{\epsilon}\rbrace$ with $\gamma\in (0,\frac{1}{2}]$ and $\epsilon>0$ from Assumption~\ref{ass:kernels2}, the initial value $X_0\in L^p(\Omega;\R)$, and the measurable coefficients $\mu\colon [0,T]\times\R\to \R$ and $\sigma\colon [0,T]\times\R\to \R$ fulfill the linear growth condition
  \begin{equation*}
    |\mu(t,x)| + |\sigma(t,x)|\leq C_{\mu,\sigma}(1+|x|),
  \end{equation*}
  for some $C_{\mu,\sigma}>0$ and all $t\in[0,T]$, $x\in\R$, $\mu$ the Lipschitz condition
  \begin{equation*}
    |\mu(t,x)-\mu(t,y)| \leq C_{\mu}|x-y|,
  \end{equation*}
  for some $C_{\mu}>0$ and all $t\in[0,T]$, $x,y\in\R$, and $\sigma$ the H{\"o}lder condition
  \begin{equation*}
    |\sigma(t,x)-\sigma(t,y)| \leq C_{\sigma}|x-y|^{\frac{1}{2}+\xi},
  \end{equation*}
  for $\xi\in[0,\frac{1}{2}]$, some $C_{\sigma}>0$ and all $t\in[0,T]$, $x,y\in\R$. Then, the stochastic Volterra equation
  \begin{equation*}
    X_t=X_0+\int_0^t K_\mu(s,t)\mu(s,X_s)\dd s+ \int_0^t K_\sigma(s,t)\sigma(s,X_s)\dd B_s,\quad t\in[0,T],
  \end{equation*}
  is well-posed in $L^p$, where $(B_t)_{t\in[0,T]}$ is a one-dimensional Brownian motion.
\end{lemma}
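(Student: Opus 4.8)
The plan is to follow the proof of \cite[Theorem~2.3]{Promel2023}, the only substantive change being that the initial value is now a random variable $X_0\in L^p(\Omega;\R)$ rather than a constant; since $X_0$ enters all estimates below only through $\E[|X_0|^p]$, this causes no structural difficulty. As is standard for Yamada--Watanabe type results, two things have to be shown: pathwise uniqueness in $L^p$, and weak existence of an $L^p$-solution. Combining them via the Yamada--Watanabe theorem for stochastic Volterra equations (\cite{Yamada1971}, in the version of \cite{AbiJaber2019,Promel2023}) then yields a strong, hence unique, $L^p$-solution.

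For pathwise uniqueness I would first exploit the regularity of the kernels to turn any $L^p$-solution into a genuine It\^o process. Under Assumption~\ref{ass:kernels2} (or~\ref{ass:kernels3}) the kernels are continuous on the compact set $\Delta_T$, $\partial_2K_\mu$ is bounded, $\partial_2K_\sigma\in L^2(\Delta_T)$, and writing $K_\bullet(s,t)=K_\bullet(s,s)+\int_s^t\partial_2K_\bullet(s,u)\dd u$ and applying the stochastic Fubini theorem one obtains, for any $L^p$-solution $X$,
\begin{equation*}
  X_t=X_0+\int_0^t\beta_s\dd s+\int_0^tK_\sigma(s,s)\sigma(s,X_s)\dd B_s,\qquad t\in[0,T],
\end{equation*}
with $\beta_s=K_\mu(s,s)\mu(s,X_s)+\int_0^s\partial_2K_\mu(r,s)\mu(r,X_r)\dd r+\int_0^s\partial_2K_\sigma(r,s)\sigma(r,X_r)\dd B_r$; the a~priori bound $\sup_{t\le T}\E[|X_t|^2]<\infty$ (a consequence of $X_0\in L^p$ and the standard $L^p$-theory of SVEs under Assumption~\ref{ass:kernels2}, cf.\ \cite{Wang2008}) together with the integrability of the kernels makes $\beta\in L^1(\Omega\times[0,T])$ and legitimises all uses of stochastic Fubini. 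Given two $L^p$-solutions $X,\tilde X$ on the same probability space, hence sharing $B$ and the initial datum $X_0$, I would set $Z:=X-\tilde X$ and let $(\phi_n)$ be the usual Yamada--Watanabe $C^2$-approximations of $|\cdot|$: $\phi_n(0)=0$, $|\phi_n'|\le1$, $\operatorname{supp}\phi_n''\subset\{a_n\le|z|\le a_{n-1}\}$ with $1=a_0>a_1>\dots\downarrow0$, and $\sup_z|z|\phi_n''(z)\to0$. Applying It\^o's formula to $\phi_n(Z_t)$ and taking expectations, the second-order term is controlled on $\operatorname{supp}\phi_n''$ by $\|K_\sigma\|_\infty^2C_\sigma^2\,\phi_n''(Z_s)|Z_s|^{1+2\xi}\le\|K_\sigma\|_\infty^2C_\sigma^2\,a_{n-1}^{2\xi}\sup_z|z|\phi_n''(z)$, using the $(\tfrac12+\xi)$-H\"older bound on $\sigma$, and hence vanishes as $n\to\infty$; the first-order term splits, via the formula for $\beta_s-\tilde\beta_s$, into pathwise-Lipschitz-in-$Z$ pieces (using $C_\mu$ and the boundedness of $K_\mu$ and $\partial_2K_\mu$), contributing $\lesssim\int_0^t\E|Z_s|\dd s$, and the contribution of the stochastic convolution $\int_0^s\partial_2K_\sigma(r,s)(\sigma(r,X_r)-\sigma(r,\tilde X_r))\dd B_r$. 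The latter is the delicate piece: I would treat it as in the proof of \cite[Theorem~2.3]{Promel2023}, pairing it with $\phi_n'(Z_s)$ and using further stochastic-Fubini and It\^o-isometry manipulations — for which the higher regularity of $K_\sigma$ imposed in Assumption~\ref{ass:kernels2}(ii) (absolute continuity of $\partial_2K_\sigma(\cdot,t)$ and the bounds on $\partial_1K_\sigma$, $\partial_2K_\sigma(\cdot,\cdot)$ and the mixed derivative) is exactly what is needed — so as to again produce a term that is $O\big(a_{n-1}^{2\xi}\sup_z|z|\phi_n''(z)\big)\to0$. Letting $n\to\infty$ (dominated convergence, justified by $\sup_s\E|Z_s|<\infty$ and $\phi_n(Z_t)\le|Z_t|$) then yields $\E|Z_t|\lesssim\int_0^t\E|Z_s|\dd s$, so Gr\"onwall's lemma gives $\E|Z_t|=0$ for all $t$ and path-continuity upgrades this to $\mathbb{P}(Z_t=0,\,\forall\,t\in[0,T])=1$. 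Under Assumption~\ref{ass:kernels3} the same argument applies verbatim with $K_\mu(s,t)=K_\sigma(s,t)=\tilde K(t-s)$, $K_\sigma(s,s)=\tilde K(0)$ and $\partial_2K_\bullet(s,t)=\tilde K'(t-s)$, all bounded since $\tilde K\in C^1([0,T];\R)$.

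For existence I would mollify the diffusion coefficient, choosing continuous $\sigma_k$ that are Lipschitz in $x$ uniformly in $t$, satisfy the same linear-growth and $(\tfrac12+\xi)$-H\"older bounds uniformly in $k$, and converge to $\sigma$ locally uniformly. For each $k$, the SVE with $\sigma_k$ — bounded continuous kernels, Lipschitz coefficients — has a unique $L^p$-solution $X^k$ allowing a random $X_0\in L^p$, by the classical $L^p$-theory of SVEs (cf.\ Lemma~\ref{lem:Lipsch_wp} and \cite{Wang2008}). The usual SVE moment and time-increment estimates under Assumption~\ref{ass:kernels2} (resp.\ \ref{ass:kernels3}) give $\sup_k\sup_{t\le T}\E[|X^k_t|^p]<\infty$ and $\E[|X^k_t-X^k_{t'}|^p]\lesssim|t-t'|^{p\gamma}$ uniformly in $k$; since $p\gamma>1$ (here the hypothesis $p>\max\{1/\gamma,1+2/\epsilon\}$ enters, the second inequality being needed for the kernel/stochastic-integral moment bounds) Kolmogorov's criterion makes $(\mathcal{L}(X^k))_k$ tight on $C([0,T];\R)$. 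Extracting a weakly convergent subsequence and passing to the limit in the regularised equation by a Skorokhod-representation (or martingale-problem) argument identifies the limit as a weak $L^p$-solution of the SVE with $\sigma$. Together with the pathwise uniqueness above and the Yamada--Watanabe theorem for stochastic Volterra equations, this produces a strong $L^p$-solution, and well-posedness in $L^p$ follows.

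The step I expect to be the main obstacle is the treatment of the stochastic-convolution term $\int_0^s\partial_2K_\sigma(r,s)(\sigma(r,X_r)-\sigma(r,\tilde X_r))\dd B_r$ appearing in the drift $\beta_s-\tilde\beta_s$ of the reformulated equation: because an $L^p$-solution is not a priori a semimartingale this term genuinely occurs, and it is \emph{not} pathwise Lipschitz in $Z=X-\tilde X$, so it cannot be absorbed into Gr\"onwall's lemma directly; the additional stochastic-Fubini/It\^o-isometry manipulation replaces the straightforward Lipschitz bound available for ordinary SDEs, and it is only the H\"older modulus of $\sigma$ combined with the Yamada--Watanabe scaling of $\phi_n''$ that makes the resulting term negligible. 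A related but more routine difficulty is keeping the integrability bookkeeping consistent under the sole hypothesis $X_0\in L^p$, so that every interchange of expectation and (stochastic) integration and every application of the stochastic Fubini theorem above is justified.
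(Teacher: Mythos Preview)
Your overall strategy matches the paper's exactly: the paper's proof of this lemma is nothing more than the one-line remark that, with $X_0\in L^p$, the proof of \cite[Theorem~2.3]{Promel2023} goes through unchanged, and your sketch is a faithful expansion of that reference (semimartingale rewriting via stochastic Fubini, Yamada--Watanabe approximation $\phi_n$ of $|\cdot|$, existence by mollifying $\sigma$, tightness, and the Yamada--Watanabe theorem).

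There is, however, one genuine misstep in your description of the ``delicate piece''. The drift contribution
\[
  I_{3,t}^n=\int_0^t\phi_n'(Z_s)\Big(\int_0^s\partial_2K_\sigma(r,s)\big(\sigma(r,X_r)-\sigma(r,\tilde X_r)\big)\dd B_r\Big)\dd s
\]
does \emph{not} become a term of order $O\big(a_{n-1}^{2\xi}\sup_z|z|\phi_n''(z)\big)$ that vanishes as $n\to\infty$; it carries no factor of $\phi_n''$ at all, only $|\phi_n'|\le 1$. In the approach of \cite{Promel2023}---which the paper reproduces verbatim in the proof of Theorem~\ref{thm:well-posedness2}, Case~(i)---one instead introduces the auxiliary process $\tilde Y_t:=\int_0^t(\sigma(s,X_s)-\sigma(s,\tilde X_s))\dd B_s$, integrates $\int_0^s\partial_2K_\sigma(r,s)\dd\tilde Y_r$ by parts (using the bounds on $\partial_2K_\sigma(s,s)$ and $\int_s^t|\partial_{21}K_\sigma(s,u)|\dd u$ from Assumption~\ref{ass:kernels2}(ii)) to get $\E[I_{3,t}^n]\lesssim\int_0^t\E[|\tilde Y_s|]\dd s$, and then closes the loop by a \emph{coupled} Gr\"onwall argument for $M(t):=\E[|Z_t|]+\E[|\tilde Y_t|]$. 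The missing inequality for $\E[|\tilde Y_t|]$ comes from rearranging the identity $Z_t=\int_0^tK_\mu(s,t)(\mu(s,X_s)-\mu(s,\tilde X_s))\dd s+K_\sigma(t,t)\tilde Y_t-\int_0^t\partial_1K_\sigma(s,t)\tilde Y_s\dd s$, which is where the non-degeneracy hypothesis $|K_\sigma(t,t)|\ge C_1>0$ in Assumption~\ref{ass:kernels2}(ii) is actually used. Without this auxiliary process your Gr\"onwall inequality $\E|Z_t|\lesssim\int_0^t\E|Z_s|\dd s$ does not follow.
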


\begin{proof}
  With the assumed integrability on $X_0$, it is straightforward to adapt the proof of \cite[Theorem~2.3]{Promel2023} to the case that $X_0$ is a random variable.
\end{proof}

The next lemma is a slight generalization of \cite[Proposition~B.3]{AbiJaberElEuch2019b}, providing the well-posedness of one-dimensional SVEs with convolutional kernels and random initial conditions.

\begin{lemma}\label{lem:well-posednessSVE_convolutional}
  Suppose that $X_0\in L^p(\Omega;\R)$ for some $p>2$, the kernels are of the form $K_\mu(s,t)=K_\sigma(s,t)=\tilde{K}(t-s)$ for some $\tilde{K}\in C^1([0,T];\R)$, and the measurable coefficients $\mu\colon [0,T]\times\R\to \R$ and $\sigma\colon [0,T]\times\R\to \R$ fulfill the linear growth condition
  \begin{equation*}
    |\mu(t,x)| + |\sigma(t,x)|\leq C_{\mu,\sigma}(1+|x|),
  \end{equation*}
  for some $C_{\mu,\sigma}>0$ and all $t\in[0,T]$, $x\in\R$, $\mu$ satisfies the Lipschitz condition
  \begin{equation*}
    |\mu(t,x)-\mu(t,y)| \leq C_{\mu}|x-y|,
  \end{equation*}
  for some $C_{\mu}>0$ and all $t\in[0,T]$, $x,y\in\R$, and $\sigma$ satisfies the H{\"o}lder condition
  \begin{equation*}
    |\sigma(t,x)-\sigma(t,y)| \leq C_{\sigma}|x-y|^{\frac{1}{2}+\xi},
  \end{equation*}
  for $\xi\in[0,\frac{1}{2}]$, some $C_{\sigma}>0$ and all $t\in[0,T]$, $x,y\in\R$. Then, the stochastic Volterra equation
  \begin{equation}
    X_t=X_0+\int_0^t \tilde{K}(t-s)\mu(s,X_s)\dd s+ \int_0^t \tilde{K}(t-s)\sigma(s,X_s)\dd B_s,\quad t\in[0,T],\label{eq:SVE_convolutional}
  \end{equation}
  is well-posed in $L^p$, where $(B_t)_{t\in[0,T]}$ is a one-dimensional Brownian motion.
\end{lemma}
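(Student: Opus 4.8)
The plan is to run the Yamada--Watanabe programme --- weak existence in $L^p$, then pathwise uniqueness in $L^p$, then the Yamada--Watanabe theorem --- and to check that the proof of \cite[Proposition~B.3]{AbiJaberElEuch2019b}, which treats a deterministic initial value, carries over to a random $X_0\in L^p(\Omega;\R)$ with $p>2$; the only change is in the a priori moment estimates, which remain finite, while the uniqueness argument is untouched.

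For existence I would regularise the diffusion coefficient by mollifying in space: with a standard mollifier $\varphi_\eta$ set $\sigma_\eta(t,x):=\int_\R\sigma(t,x-y)\varphi_\eta(y)\dd y$, so that each $\sigma_\eta$ is Lipschitz continuous in $x$ uniformly in $t$, obeys a linear growth bound with a constant independent of $\eta$, and $\sup_{t,x}|\sigma_\eta(t,x)-\sigma(t,x)|\lesssim\eta^{1/2+\xi}\to 0$. By the classical well-posedness of stochastic Volterra equations with bounded convolutional kernels and Lipschitz coefficients (cf.\ \cite{Protter1985,Wang2008,AbiJaberElEuch2019b}) there is, for each $\eta>0$, a unique strong $L^p$-solution $X^\eta$ of the version of \eqref{eq:SVE_convolutional} with $\sigma_\eta$ in place of $\sigma$, with initial value $X_0$ and Brownian motion $B$. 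Since $\tilde K\in C^1([0,T];\R)$ is bounded and the present setting is one-dimensional, the Burkholder--Davis--Gundy inequality, the linear growth of $\mu$ and $\sigma_\eta$, and a Gr\"onwall argument give $\sup_{\eta}\E\big[\sup_{t\in[0,T]}|X^\eta_t|^p\big]\lesssim 1+\E[|X_0|^p]<\infty$; this is the only place where the integrability of the random initial value is used. Using in addition $|\tilde K(t-r)-\tilde K(s-r)|\le\|\tilde K'\|_\infty|t-s|$, one gets $\E[|X^\eta_t-X^\eta_s|^p]\lesssim|t-s|^{p/2}$ uniformly in $\eta$, so that, since $p>2$, Kolmogorov's criterion makes $(X^\eta)_{\eta}$ tight in $C([0,T];\R)$. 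Passing to a subsequential limit (via a Skorokhod representation) and then to the limit in the drift and stochastic integrals, using $\sigma_\eta\to\sigma$ uniformly and the uniform moment bounds, identifies the limit as a weak $L^p$-solution of \eqref{eq:SVE_convolutional}, exactly as in \cite[Proposition~B.3]{AbiJaberElEuch2019b}.

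For pathwise uniqueness, let $X,\tilde X$ be two $L^p$-solutions of \eqref{eq:SVE_convolutional} on the same stochastic basis with the same Brownian motion $B$ and the same initial value $X_0$, set $D:=X-\tilde X$ (so $D_0=0$ $\mathbb P$-a.s.) and $N_t:=\int_0^t(\sigma(s,X_s)-\sigma(s,\tilde X_s))\dd B_s+\int_0^t(\mu(s,X_s)-\mu(s,\tilde X_s))\dd s$. Writing $\tilde K(t-s)=\tilde K(0)+\int_0^{t-s}\tilde K'(u)\dd u$ and applying the (stochastic) Fubini theorem gives the representation $D_t=\tilde K(0)N_t+\int_0^t\tilde K'(t-v)N_v\dd v$, which exhibits $D$ in terms of the continuous semimartingale $N$ with $\dd\langle N\rangle_t=(\sigma(t,X_t)-\sigma(t,\tilde X_t))^2\dd t$. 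On this representation one runs the Yamada--Watanabe-type argument of \cite[Proposition~B.3]{AbiJaberElEuch2019b}: with the Yamada--Watanabe functions $\phi_n$ one controls $\sup_{s\le t}\E[|D_s|]$, estimating the second-order contributions via $|\sigma(t,x)-\sigma(t,\tilde x)|^2\le C_\sigma^2|x-\tilde x|^{1+2\xi}$ together with $\phi_n''(z)|z|^{1+2\xi}\le 2/n$, the drift via the Lipschitz bound on $\mu$, and the Volterra remainder via the boundedness of $\tilde K'$, closing a Gr\"onwall inequality whose inhomogeneity vanishes as $n\to\infty$ and concluding $D\equiv 0$. Since this uses the initial data only through $D_0=0$, the randomness of $X_0$ plays no role. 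Weak existence in $L^p$ together with pathwise uniqueness in $L^p$ then yields, by the Yamada--Watanabe theorem, a unique strong $L^p$-solution of \eqref{eq:SVE_convolutional}. I expect the pathwise-uniqueness estimate to be the delicate point --- the Volterra structure couples the dynamics of $D$ to a stochastic integral of $\sigma(\cdot,X)-\sigma(\cdot,\tilde X)$, and the Yamada--Watanabe functional is naturally evaluated at $N$ rather than at $D$ itself --- but this is precisely what \cite[Proposition~B.3]{AbiJaberElEuch2019b} provides, so that the only task here is the verification above that a random $X_0\in L^p(\Omega;\R)$, $p>2$, leaves every estimate intact.
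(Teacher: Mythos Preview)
Your proposal is correct and follows essentially the same route as the paper: weak existence plus pathwise uniqueness via \cite[Proposition~B.3]{AbiJaberElEuch2019b}, combined through the Yamada--Watanabe theorem, with the observation that a random $X_0\in L^p(\Omega;\R)$ only enters through the moment bound in existence and through $D_0=0$ in uniqueness. The only cosmetic difference is that the paper cites \cite[Theorem~3.3]{Promel2022} directly for weak existence rather than spelling out the mollification--tightness--Skorokhod scheme you sketch, but that is precisely the content of such a result.
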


\begin{proof}
  The weak existence of some $L^p$-solution to the SVE~\eqref{eq:SVE_convolutional} follows from \cite[Theorem~3.3]{Promel2022} with the straightforward adaptation to random initial conditions~$X_0$. For the pathwise uniqueness, one can adapt the proof from \cite[Proposition~B.3]{AbiJaberElEuch2019b} using the Lipschitz and H{\"o}lder continuity of $\mu,\sigma$ uniformly in~$t$.
\end{proof}

Moreover, for the well-posedness results of mean-field SVEs we need a multi-dimensional well-posedness result for stochastic Volterra equations where the H{\"o}lder continuous coefficient~$\sigma$ is a diagonal matrix, where each entry only depends on the component of the solution of the respective dimension, as provided in the next remark.

\begin{remark}\label{rem:multi-dimVolterra}
  For $N\in\N$ let us consider the $N$-dimensional stochastic Volterra equation
  \begin{equation}\label{eq:NdimSVE}
    X_t = X_0 +\int_0^t K_\mu(s,t)\mu(s,X_s)\dd s + \int_0^t K_\sigma(s,t)\sigma(s,X_s)\dd B_s,\quad t\in[0,T],
  \end{equation}
  where $(B_t)_{t\in[0,T]}$ is an $N$-dimensional Brownian motion,
  \begin{align*}
    X_t=\left(\begin{array}{c} X_t^1 \\ \vdots \\ X_t^N \end{array}\right),\quad
    X_0=\left(\begin{array}{c} X_0^1 \\ \vdots \\ X_0^N \end{array}\right), \quad
    \mu(s,X_s)=\left(\begin{array}{c} \mu_1(s,X_s) \\ \vdots \\ \mu_N(s,X_s) \end{array}\right)
  \end{align*}
  and
  \begin{align*}
    \sigma(s,X_s)=\left( \begin{array}{ccc}
    \sigma_1(s,X_s^1) &\cdots & 0 \\
    \vdots & \ddots & \vdots \\
    0 & \cdots& \sigma_N(s,X_s^N) \\
    \end{array}\right).
  \end{align*}
  Suppose that the kernels $K_\mu,K_\sigma$ and the initial value $X_0$ fulfill Assumption~\ref{ass:kernels2} or Assumption~\ref{ass:kernels3} with $p$ given from there, that $\mu\colon [0,T]\times \R^N\to \R^N$ is Lipschitz continuous in the space variable, uniformly in the time variable, and each $\sigma_i\colon [0,T]\times\R\to \R$ for $i\in\lbrace1,\dots, N\rbrace$ is $1/2+\xi$-H{\"o}lder continuous in the space variable, uniformly in the time variable for some $\xi\in[0,1/2]$. By considering each dimension separately, as e.g. done for SDEs in \cite[Theorem~1]{Yamada1971}, it is straightforward to conclude the well-posedness in $L^p$ of the SVE~\eqref{eq:NdimSVE} from the corresponding one-dimensional results in Lemma~\ref{lem:Hold_wp} and Lemma~\ref{lem:well-posednessSVE_convolutional}.
\end{remark}

We conclude this section with a remark on the path regularity of solutions and one on the notion of $L^p$-well-posedness.

\begin{remark}[Path regularity]\label{rem:2}
  Let $X$ be the unique ($d$-, $1$- or $N$-dimensional) solution to the stochastic Volterra equation in either of the settings in Lemma~\ref{lem:Lipsch_wp}, Lemma~\ref{lem:Hold_wp}, Lemma~\ref{lem:well-posednessSVE_convolutional} or Remark~\ref{rem:multi-dimVolterra} with $p>\max\lbrace  \frac{1}{\gamma}, 1+\frac{2}{\epsilon}\rbrace$. In the case of Assumption~\ref{ass:kernels3}, we can set $\gamma=\frac{1}{2}$ and $p>2$ given from there. Assuming $X_0\in L^q$ for $q\geq p$, by adapting \cite[Lemma~3.1 and Lemma~3.4]{Promel2023} to the multi-dimensional setting, it follows that
  \begin{equation*}
    \sup\limits_{t\in[0,T]}\E[|X_t|^q]<\infty,
  \end{equation*}
  and
  \begin{equation*}
    \E[|X_t-X_s|^q]\lesssim |t-s|^{\beta q}
  \end{equation*}
  for any $q\geq 1$, $\beta\in (0,\gamma-\frac{1}{p})$, $s,t\in[0,T]$, and, hence, that the solution $X$ has a modification with $\beta$-H{\"o}lder continuous sample paths.
\end{remark}

\begin{remark}
  The notion of $L^p$-well-posedness, as used Lemma~\ref{lem:Lipsch_wp}, Lemma~\ref{lem:Hold_wp}, Lemma~\ref{lem:well-posednessSVE_convolutional} and Remark~\ref{rem:multi-dimVolterra}, appears to be necessary to prove the existence of a strong solution and pathwise uniqueness. First, one needs to assume that a solution $X$ is in $L^p(\Omega\times[0,T];\R^d)$ to conclude continuity of its sample paths with standard estimates, as in \cite[Lemma~3.1]{Promel2023}. Secondly, in order to be able to apply Gr{\"o}nwall's Lemma to an inequality of the form
  \begin{equation*}
    \E[|X_t-Y_t|^p]\lesssim \int_0^t \E[|X_s-Y_s|^p]\dd s,
  \end{equation*}
  one needs to assume that both solutions $X,Y$ are in $L^p(\Omega\times[0,T];\R^d)$ to guarantee finiteness of the expectations $\sup_{s\in[0,t]}\E[|X_s|^p]$ and $\sup_{s\in[0,t]}\E[|Y_s|^p]$ by standard estimates, as in \cite[Lemma~3.4]{Promel2023}.
\end{remark}

\section{Well-posedness: Proof of Theorem~\ref{thm:well-posedness} and~\ref{thm:well-posedness2}}\label{sec:1}

This section is devoted to the proofs of Theorem~\ref{thm:well-posedness} and of Theorem~\ref{thm:well-posedness2}.

\begin{proof}[Proof of Theorem~\ref{thm:well-posedness}]
  We define the solution map~$\Phi$ by
  \begin{equation}\label{eq:solution map}
    \Phi\colon C\big([0,T];\mathcal{P}_{\delta}(\R^d)\big)\to C\big( [0,T];\mathcal{P}_{\delta}(\R^d) \big),
    \quad  \rho \mapsto \Phi(\rho) := \big(\mathcal{L}(X^\rho_t)\big)_{t\in[0,T]},
  \end{equation}
  where $X^\rho$ is the unique $L^p$-solution to the stochastic Volterra equation
  \begin{equation}\label{eq:solution SVE}
    X_t = X_0+\int_0^t K_{\mu}(s,t)\mu(s,X_s,\rho_s)\dd s+\int_0^t K_{\sigma}(s,t)\sigma(s,X_s,\rho_s)\dd B_s,\quad t\in [0,T].
  \end{equation}
  Note that a unique fixed point of the solution map~$\Phi$ implies the existence of a unique $L^p$-solution $X=(X_t)_{t\in[0,T]}$ to the mean-field SVE~\eqref{eq:MVSVE} satisfying $\sup_{t\in[0,T]}\E[|X_t|^{q}]<\infty$ for every $q\geq 1$, c.f. Step~1 below. Hence, it is sufficient to prove that the solution map~$\Phi$ has a unique fixed point.

  \textit{Step~1:} We show the well-definedness of the solution map~$\Phi$.

  For a fixed $\rho=(\rho_t)_{t\in[0,T]}\in C([0,T];\mathcal{P}_{\delta}(\R^d))$, the integral equation~\eqref{eq:solution SVE} is an ordinary stochastic Volterra equation. Due to Assumption~\ref{ass:coefficients1}, the linear growth and Lipschitz condition of Lemma~\ref{lem:Lipsch_wp} are satisfied. Hence, there exists a unique strong $L^p$-solution $X^\rho=(X^\rho_t)_{t\in[0,T]}$ to the SVE~\eqref{eq:solution SVE} and, by Remark~\ref{rem:2}, we get that $\sup_{t\in[0,T]}\E[|X^\rho_t|^q]<\infty$ for $q\geq q$, provided $X_0\in L^q$, and that the sample paths of $X^\rho$ are almost surely continuous. Moreover, note that $\big(\mathcal{L}(X^\rho_t)\big)_{t\in[0,T]}\in C\big( [0,T];\mathcal{P}_{\delta}(\R^d) \big)$, since, by the representation of the Wasserstein distance in terms of random variables (see \cite[(5.14)]{Carmona2018}) and by Remark~\ref{rem:2}, we have
  \begin{equation*}
    W_\delta\big( \mathcal{L}(X_t^\rho),\mathcal{L}(X_s^\rho) \big)
    \leq \E\big[ |X_t^\rho-X_s^\rho |^\delta \big]^{\frac{1}{\delta}}
    \lesssim |t-s|^\beta,\quad  s,t\in[0,T],
  \end{equation*}
  for any $\beta\in (0,\gamma-1/p)$ with $\gamma\in (0,\frac{1}{2}]$, where the parameters are given in Assumption~\ref{ass:kernels1}.

  \textit{Step~2:} For $\rho,\tilde{\rho}\in C\big([0,T];\mathcal{P}_{\delta}(\R^d)\big)$, we show that
  \begin{equation}\label{ineq:W}
    \sup\limits_{s\in[0,t]}W_\delta(\Phi(\rho)_s,\Phi(\tilde{\rho})_s)^\delta\lesssim \int_0^t W_\delta(\rho_s,\tilde{\rho}_s)^\delta \dd s,\quad t\in [0,T].
  \end{equation}

  We get that
  \begin{align}\label{eq:L1bound}
    \E\big[ |X_t^\rho-X_t^{\tilde{\rho}}|^{\delta} \big]
    &\lesssim \E\bigg[ \Big |\int_0^t K_\mu(s,t)\big(  \mu(s,X^\rho_s,\rho_s)-\mu(s,X^{\tilde{\rho}}_s,\tilde{\rho}_s) \big)\dd s \Big|^{\delta} \bigg]\notag\\
    &\qquad +\E\bigg[ \Big|\int_0^t K_\sigma(s,t)\big( \sigma(s,X^\rho_s,\rho_s)-\sigma(s,X^{\tilde{\rho}}_s,\tilde{\rho}_s) \big)\dd B_s \Big|^{\delta} \bigg]\notag\\
    &\lesssim \Big( \int_0^t|K_\mu(s,t)|^{\frac{4+2\epsilon}{4+\epsilon}}\dd s \Big)^{\frac{4+\epsilon}{\epsilon}} \int_0^t \E\big[ \big|\mu(s,X^\rho_s,\rho_s)-\mu(s,X^{\tilde{\rho}}_s,\tilde{\rho}_s) \big|^{\delta}\big]\dd s \notag\\
    &\qquad +\E\bigg[ \Big( \int_0^t \big| K_\sigma(s,t)\big( \sigma(s,X^\rho_s,\rho_s)-\sigma(s,X^{\tilde{\rho}}_s,\tilde{\rho}_s) \big)\big|^2\dd s \Big)^{\frac{\delta}{2}} \bigg]\notag\\
    &\lesssim \int_0^t \E\big[ \big|\mu(s,X^\rho_s,\rho_s)-\mu(s,X^{\tilde{\rho}}_s,\tilde{\rho}_s) \big|^{\delta}\big]\dd s \notag\\
    &\qquad +\Big( \int_0^t|K_\sigma(s,t)|^{2+\epsilon}\dd s \Big)^{\frac{4+2\epsilon}{\epsilon(2+\epsilon)}} \int_0^t \E\big[\big| \sigma(s,X^\rho_s,\rho_s)-\sigma(s,X^{\tilde{\rho}}_s,\tilde{\rho}_s) \big|^\delta\big]\dd s \notag\\
    &\lesssim \int_0^t \Big(\E\big[ \big| X^\rho_s-X^{\tilde{\rho}}_s\big|^{\delta}\big]+W_\delta(\rho_s,\tilde{\rho}_s)^\delta\Big)\dd s.
  \end{align}
  for $t\in[0,T]$, where we used H{\"o}lder's inequality in the drift integral with $\frac{4+2\epsilon}{4+\epsilon}<1+\epsilon$ (noting that $\frac{4+2\epsilon}{4+\epsilon}$ is the conjugate of $\delta/2$) such that, by the choice of $\delta$ in \eqref{eq:delta}, $\frac{4+\epsilon}{4+2\epsilon}+\frac{1}{\delta}=1$ and in the diffusion integral with $\frac{2+\epsilon}{2}$ such that \eqref{eq:delta_Holder} holds, Burkholder--Davis--Gundy's inequality applied to the stochastic processes $(\int_0^r K_\sigma(s,t)\big( \sigma(s,X^\rho_s,\rho_s)-\sigma(s,X^{\tilde{\rho}}_s,\tilde{\rho}_s) \big)\dd B_s)_{r\in [0,t]}$, Fubini's theorem, the integrability of the kernels from Assumption~\ref{ass:kernels1} and the Lipschitz continuity of $\mu$ and $\sigma$ from Assumption~\ref{ass:coefficients1}. Since we have that
  \begin{equation*}
    \sup_{s\in[0,T]}\E[|X_s^\rho-X_s^{\tilde{\rho}}|^\delta]<\infty,
  \end{equation*}
  we can apply Gr{\"o}nwall's inequality to conclude that
  \begin{equation}\label{ineq:Gronwall}
    \E\big[ |X_t^\rho-X_t^{\tilde{\rho}}|^{\delta} \big] \lesssim \int_0^t W_\delta(\rho_s,\tilde{\rho}_s)^\delta \dd s.
  \end{equation}
  Since by assumption $\rho,\tilde{\rho}\in C\big([0,T];\mathcal{P}_{\delta}(\R^d)\big)$, we can bound the Wasserstein distance by
  \begin{equation*}
    W_\delta(\Phi(\rho)_t,\Phi(\tilde{\rho})_t)=W_\delta(\mathcal{L}(X_t^\rho),\mathcal{L}(X_t^{\tilde{\rho}}))\leq \E[|X_t^\rho-X_t^{\tilde{\rho}}|^\delta]^{\frac{1}{\delta}},
  \end{equation*}
  c.f. \cite[(5.14)]{Carmona2018}, and plugging this into \eqref{ineq:Gronwall} and taking the supremum, we obtain~\eqref{ineq:W}.

  \textit{Step~3:} We show that the solution map~$\Phi$ has a unique fixed point.

  First note that it is sufficient to show that $\Phi^k$ is a contraction, see \cite[Theorem]{Bryant1968}, since the Wasserstein space $C([0,T];\mathcal{P}_\delta(\R^d))$ is a complete metric space, see e.g. \cite[Proposition~2.2.8]{Panaretos2020}, where $\Phi^k$ denotes the $k$-th composition of $\Phi$ with itself. Let $C>0$ denote the generic constant in \eqref{ineq:W}. Then, we get iteratively for $k\in\N$,
  \begin{align*}
    \sup\limits_{s\in[0,T]}W_\delta(\Phi^k(\rho)_s,\Phi^k(\tilde{\rho})_s)^\delta &\leq C^k\int_0^T \frac{(T-s)^{k-1}}{(k-1)!}W_\delta(\rho_s,\tilde{\rho}_s)^\delta\dd s\\
    &\leq \frac{C^kT^k}{k!}\sup\limits_{s\in[0,T]} W_\delta(\rho_s,\tilde{\rho}_s)^\delta.
  \end{align*}
  Thus, choosing $k$ large enough such that $\frac{C^kT^k}{k!}<1$, we see that the mapping $\Phi^k$ is a contraction and, hence, $\Phi$ admits a unique fixed point, which completes the proof.
\end{proof}

Next, we provide the proof of Theorem~\ref{thm:well-posedness2}. We keep its presentation fairly short since it is in parts similar to the proof of Theorem~\ref{thm:well-posedness}.

\begin{proof}[Proof of Theorem~\ref{thm:well-posedness2}]
  We again consider the solution map~$\Phi$, as defined in \eqref{eq:solution map}, but choose $\delta=1$ and $d=1$, that is,
  \begin{equation*}
    \Phi\colon C\big([0,T];\mathcal{P}_{1}(\R)\big)\to C\big( [0,T];\mathcal{P}_{1}(\R) \big), \quad  \rho \mapsto \Phi(\rho) := \big(\mathcal{L}(X^\rho_t)\big)_{t\in[0,T]},
  \end{equation*}
   where $X^\rho$ is the unique $L^p$-solution to the stochastic Volterra equation
  \begin{equation*}
    X_t = X_0+\int_0^t K_{\mu}(s,t)\mu(s,X_s,\rho_s)\dd s+\int_0^t K_{\sigma}(s,t)\sigma(s,X_s)\dd B_s,\quad t\in [0,T].
  \end{equation*}
  In the following we show that the solution map $\Phi$ possesses a unique fixed point. We proceed as in the proof of Theorem~\ref{thm:well-posedness}. \textit{Step 1} works exactly the same, using Lemma~\ref{lem:Hold_wp} and Lemma~\ref{lem:well-posednessSVE_convolutional}, respectively, instead of Lemma~\ref{lem:Lipsch_wp}, and \textit{Step 3} works exactly the same. That means we only need to show \textit{Step 2}, or more precisely, estimate~\eqref{ineq:Gronwall} with $\delta=1$. To do that, we treat the cases that Assumption~\ref{ass:kernels2} or that Assumption~\ref{ass:kernels3} holds separately.

  \medskip
  \textit{Case (i):} Suppose the kernels $K_\mu,K_\sigma$ and initial condition $X_0$ satisfy Assumption~\ref{ass:kernels2}.

  To get an analogue estimate as \eqref{ineq:Gronwall}, we use the semimartingale property of a solution $(X_t^\rho)_{t\in[0,T]}$ to \eqref{eq:MVSVE} with fixed $\rho\in C([0,T];\mathcal{P}_1(\R))$ (cf. \cite[Lemma~3.6]{Promel2023} or \cite[Theorem~3.3]{Protter1985}),
  \begin{align*}
    X_t^\rho - X_0 &= \int_0^t K_\sigma(s,s)\sigma(s,X_s^\rho)\dd B_s+ \int_0^t K_\mu(s,s)\mu(s,X_s^\rho,\rho_s) \dd s \\
    &\qquad +  \int_0^t \bigg(\int_0^s \partial_2 K_\mu(u,s)\mu(u,X_u^\rho,\rho_u) \dd u + \int_0^s \partial_2 K_\sigma(u,s)\sigma(u,X_u^\rho)\dd B_u\bigg) \dd s,
  \end{align*}
  and the Yamada--Watanabe functions $\phi_n$ for $n\in\N$ (cf. \cite[Proof of Theorem~5.3]{Promel2023} or the original work \cite{Yamada1971}) that approximate the absolute value function in the following way: Let $(a_n)_{n\in\N}$ be a strictly decreasing sequence with $a_0=1$ such that $a_n\to 0$ as $n\to \infty$ and
  \begin{align*}
    \int_{a_n}^{a_{n-1}}\frac{1}{|x|^{1+2\xi}}\dd x=n,
  \end{align*}
  where $\frac{1}{2}+\xi$ is the H{\"o}lder regularity of $\sigma$.
  Furthermore, we define a sequence of mollifiers: let $(\psi_n)_{n\in\N}\in C_0^{\infty}(\R)$ be smooth functions with compact support such that $\textup{supp}(\psi_n)\subset (a_n,a_{n-1})$, and with the properties
  \begin{align}\label{prop}
    0\leq \psi_n(x)\leq \frac{2}{n|x|^{1+2\xi}}, \quad \forall x\in\R,
    \quad\text{and}\quad
    \int_{a_n}^{a_{n-1}}\psi_n(x)\dd x=1.
  \end{align}
  We set
  \begin{align*}
    \phi_n(x):=\int_0^{|x|}\left(\int_0^y \psi_n(z)\dd z\right)\dd y,\quad x\in \R.
  \end{align*}
  By \eqref{prop} and the compact support of $\psi_n$, it follows that $\phi_n(\cdot)\to |\cdot|$ uniformly as $n\to \infty$. Since every $\psi_n$ and, thus, every $\phi_n$ is zero in a neighborhood around zero, the functions~$\phi_n$ are smooth with
  \begin{equation*}
    \|\phi_n'\|_\infty\leq 1,
    \quad
    \phi_n'(x)=\textup{sgn}(x)\int_0^{|x|}\psi_n(y)\dd y,
    \quad\text{and}\quad
    \phi_n''(x)=\psi_n(|x|)
    \quad\text{for } x\in\R,
  \end{equation*}
  where $\|\cdot\|_\infty$ denotes the $\sup$-norm on $\R$.
 
  Using $\phi_n$, we apply It{\^o}'s formula to $\tilde{X}_t:=X_t^\rho-X_t^{\tilde{\rho}}$, with the notation
  \begin{equation*}
    \tilde{Z}_t:=\int_0^t\big(\mu(s,X^\rho_s,\rho_s)-\mu(s,X^{\tilde{\rho}}_s,\tilde{\rho}_s)\big)\dd s,
    \,\,\, Y_t^\rho:=\int_0^t \sigma(s,X_s^\rho)\dd B_s,
    \,\,\, H_t^\rho:=\int_0^t\partial_2 K_\sigma(s,t)\dd Y_s^\rho,
  \end{equation*}
  and $Y_t^{\tilde{\rho}}$ and $H_t^{\tilde{\rho}}$ analogue, as well as $\tilde{Y}_t:=Y_t^\rho-Y_t^{\tilde{\rho}}$, and $\tilde{H}_t:=H_t^\rho-H_t^{\tilde{\rho}}$, for $t\in [0,T]$, to obtain
  \begin{align}\label{eqmean}
    \phi_n(\tilde{X}_t)
    =&\int_0^t\phi_n'(\tilde{X}_s)\dd\tilde{X}_s+\frac{1}{2}\int_0^t \phi_n''(\tilde{X}_s)\dd\langle \tilde{X}\rangle_s\notag\\
    =&\int_0^t\phi_n'(\tilde{X}_s)K_\mu(s,s)(\mu(s,X_s^\rho,\rho_s)-\mu(s,X_s^{\tilde{\rho}},\tilde{\rho}_s))\dd s\notag\\
    &+ \int_0^t \phi_n'(\tilde{X}_s)\bigg(\int_0^s \partial_2 K_\mu(u,s)\dd \tilde{Z}_u\bigg)\dd s \notag\\
    &+ \int_0^t\phi_n'(\tilde{X}_s)\tilde{H}_s\dd s
    + \int_0^t\phi_n'(\tilde{X}_s)K_\sigma(s,s)\dd \tilde{Y}_s\notag\\
    &+ \frac{1}{2}\int_0^t \phi_n''(\tilde{X}_s) K_\sigma(s,s)^2\big(\sigma(s,X_s^\rho)-\sigma(s,X_s^{\tilde{\rho}})\big)^2 \dd s\notag\\
    =:&I_{1,t}^n+I_{2,t}^n+I_{3,t}^n+I_{4,t}^n+I_{5,t}^n.
  \end{align}
  Let us remark that $H_t^\rho$ and $H_t^{\tilde{\rho}}$ are well-defined stochastic It{\^o} integrals due to Assumption~\ref{ass:kernels2}.

  For $I_{1,t}^n$, the bound $\|\phi'_n\|_\infty \leq 1$, boundedness of~$K_\mu$, Lipschitz continuity of $\mu$, and Jensen's inequality yield
  \begin{equation}\label{i1}
    \E[I_{1,t}^n]\lesssim \int_0^t \big(\E[|\tilde{X}_{s}|]+W_1(\rho_s,\tilde{\rho}_s)\big)\dd s.
  \end{equation}
  For $I_{2,t}^n$, we additionally use the boundedness of $\partial_2 K_\mu(u,s)$ on $\Delta_T$ to obtain
  \begin{equation}\label{i12}
    \E[I_{2,t}^n]\lesssim \int_0^t \big(\E[|\tilde{X}_{s}|]+W_1(\rho_s,\tilde{\rho}_s)\big)\dd s.
  \end{equation}
  For $I_{3,t}^n$, we use $\|\phi'_n\|_\infty \leq 1$ and the integration by parts formula to estimate
  \begin{align}\label{i2}
    \E[I_{3,t}^n]&\leq \int_0^t \E[|\tilde{H}_{s}|]\dd s\notag\\
    &\leq \int_0^t |\partial_2 K_\sigma(s,s)|\E[|\tilde{Y}_{s}|]\dd s + \int_0^t \int_0^s |\partial_{21}K_\sigma(u,s)|\E[|\tilde{Y}_{u}|]\dd u \dd s\notag\\
    &\leq \int_0^t \E[|\tilde{Y}_{s}|] \bigg( \partial_2 K_\sigma(s,s) + \int_s^t |\partial_{21}K_\sigma(s,u)|\dd u \bigg)\dd s\notag\\
    &\lesssim \int_0^t \E[|\tilde{Y}_s|]\dd s,
  \end{align}
  with the boundedness of $\partial_2 K_\sigma(s,s)$ and $\int_s^t\partial_{21}K_\sigma(s,u)\dd u$ from Assumption~\ref{ass:kernels2}. For $I_{4,t}^n$, since $I_{4,t}^n$ is a martingale by \cite[p.~73, Corollary~3]{Protter2004} due to the boundedness of $K_\sigma$, the growth bound on $\sigma$ and the finiteness of the moments of $X^\rho$ and $X^{\tilde{\rho}}$ (cf. \cite[Theorem~2.3]{Promel2023}), we get
  \begin{equation}\label{i3}
    \E[I^n_{4,t}]=\E\left[ \int_0^{t}\phi_n'(\tilde{X}_s)K_\sigma(s,s)(\sigma(s,X_s^\rho)-\sigma(s,X_s^{\tilde{\rho}}))\dd B_s \right]=0.
  \end{equation}
  For $I_{5,t}^n$, we get by using the boundedness of $K_\sigma$, the H{\"o}lder continuity of $\sigma$, and the inequality $\phi_n''(x)\leq \frac{2}{n|x|^{1+2\xi}}$ that
  \begin{equation}\label{i4}
    \E[I_{5,t}^n]\lesssim \E\bigg[\int_0^t \phi_n''(\tilde{X}_{s})|\tilde{X}_{s}|^{1+2\xi}\dd s\bigg]
    \leq \E\bigg[\int_0^t \frac{2}{n|\tilde{X}_{s}|^{1+2\xi}}|\tilde{X}_{s}|^{1+2\xi}\dd s\bigg]
    \lesssim \frac{1}{n}.
  \end{equation}

  Sending $n\to\infty$ and combining the five previous estimates \eqref{i1}, \eqref{i12}, \eqref{i2}, \eqref{i3} and \eqref{i4} with \eqref{eqmean} yields
  \begin{equation}\label{part1}
    \E[|\tilde{X}_{t}|]\lesssim \int_0^t\Big(\E[|\tilde{X}_{s}|] + \E[|\tilde{Y}_{s}|]  + W_1(\rho_s,\tilde{\rho}_s)\Big)\dd s.
  \end{equation}
  To apply Gr{\"o}nwall's lemma, we set $M(t):=\E[|\tilde{X}_{t}|] + \E[|\tilde{Y}_{t}|]$ for $t\in[0,T]$. To find a bound for $\E[|\tilde{Y}_{t}|]$, we apply integration by part formula to obtain
  \begin{align}\label{equiv}
    \tilde{X}_t&=\int_0^t K_\mu(s,t)(\mu(s,X_s^\rho,\rho_s)-\mu(s,X_s^{\tilde{\rho}},\tilde{\rho}_s))\dd s + \int_0^t K_\sigma(s,t)\dd \tilde{Y}_s\notag\\
    &= \int_0^t K_\mu(s,t)(\mu(s,X_s^\rho,\rho_s)-\mu(s,X_s^{\tilde{\rho}},\tilde{\rho}_s))\dd s + K_\sigma(t,t)\tilde{Y}_t-\int_0^t\partial_1 K_\sigma (s,t)\tilde{Y}_s\dd s
  \end{align}
  keeping in mind that that $K_\sigma(\cdot,t)$ is absolutely continuous for every $t\in [0,T]$. Due to $|K_\sigma(t,t)|> C$ for some constant~$C>0$, we can rearrange \eqref{equiv} and use \eqref{part1} to get
  \begin{align}\label{part2i}
    \E\left[|\tilde{Y}_{t}|\right]
    \leq & C\bigg(
    \int_0^t\E\big[|\mu(s,X_s^\rho,\rho_s)-\mu(s,X_s^{\tilde{\rho}},\tilde{\rho}_s)|\big]\dd s \notag\\
    &+ \E\big[|\tilde{X}_{t}|\big] + \int_0^t |\partial_1 K_\sigma(s,t)|\E\big[|\tilde{Y}_{s}|\big]\dd s\bigg)\notag\\
    \lesssim & \int_0^t\Big(\E\big[|\tilde{X}_{s}|\big]+\E[|\tilde{Y}_s|]+W_1(\rho_s,\tilde{\rho}_s)\Big) \dd s.
  \end{align}
  Now, Gr{\"o}nwall's Lemma applied to \eqref{part1} and \eqref{part2i} yields $M(t)\lesssim \int_0^t W_1(\rho_s,\tilde{\rho_s})\dd s$ and hence $\E[|X_t^\rho-X_t^{\tilde{\rho}}|]\lesssim \int_0^t W_1(\rho_s,\tilde{\rho}_s)\dd s$, which is the analogue estimate of \eqref{ineq:Gronwall}.
  
  \medskip
  \textit{Case (ii):} Suppose the kernels $K_\mu,K_\sigma$ and initial condition $X_0$ to satisfy Assumption~\ref{ass:kernels3}. We need to find an analogue to estimate~\eqref{ineq:Gronwall}. By using the notation $\tilde{X}_t:=X_t^\rho-X_t^{\tilde{\rho}}$ and $Y_t^\rho:=\int_0^t \mu(s,X_s^\rho,\rho_s)\dd s+\int_0^t \sigma(s,X_s^\rho)\dd B_s$, $Y^{\tilde{\rho}}_t$ analogue, $\tilde{Y}_t:=Y^{\rho}_t-Y^{\tilde{\rho}}_t$ and the semimartingale property
  \begin{equation*}
    X_t^\rho - X_0 = \int_0^t \tilde{K}(0)\dd Y_s^\rho+ \int_0^t \int_0^s \tilde{K}^\prime(s-u)\dd Y_u^\rho \dd s,
  \end{equation*}
  we can implement the Yamada--Watanabe approach with
  \begin{align}\label{eqmean2}
    \phi_n(\tilde{X}_t)
    =&\int_0^t\phi_n'(\tilde{X}_s)\tilde{K}(0)\dd \tilde{Y}_s + \int_0^t \phi_n'(\tilde{X}_s)\int_0^s \tilde{K}'(s-u)\dd \tilde{Y}_u \dd s\notag\\
     &+ \frac{1}{2}\int_0^t \phi_n''(\tilde{X}_s) \tilde{K}(0)^2\big(\sigma(s,X_s^\rho)-\sigma(s,X_s^{\tilde{\rho}})\big)^2 \dd s\notag\\
    =:&I_{1,t}^n+I_{2,t}^n+I_{3,t}^n.
  \end{align}
  Now, the Lipschitz assumption on $\mu$ applied to $I_{1,t}^n$ and $I_{2,t}^n$ such as the H{\"o}lder assumption on $\sigma$ applied to $I_{3,t}^n$, the boundedness of $\tilde{K}$ and $\tilde{K}^\prime$, the inequalities $\|\phi_n\|_\infty\leq 1$ and $\phi_n^{\prime\prime}(x)\leq \frac{2}{n|x|^{1+2\xi}}$, and sending $n\to\infty$ yields as in \textit{Case (i)} with Gr{\"o}nwall's Lemma the inequality $\E[|X_t^\rho-X_t^{\tilde{\rho}}|]\lesssim \int_0^t W_1(\rho_s,\tilde{\rho}_s)\dd s$, which implies the estimate~\eqref{ineq:Gronwall} and, hence, yields the claimed well-posedness of the mean-field SVE~\eqref{eq:MVSVE2}.
\end{proof}

\begin{remark}\label{rem:1}
  The well-posedness from Theorems~\ref{thm:well-posedness} and \ref{thm:well-posedness2} implies together with a general version of the classical Yamada--Watanabe result (see e.g. \cite[Theorem~1.5]{Kurtz2014}, see also \cite[Example 2.14]{Kurtz2014}) that there is some measurable map $G\colon \R^d\times C([0,T];\R^m)\to C([0,T];\R^d)$ such that any solution $X$ of \eqref{eq:MVSVE} and \eqref{eq:MVSVE2}, respectively, given some initial value $X_0$ and Brownian motion $B$ can be represented as $X=G(X_0,B)$. Hence, if $X,\tilde{X}$ are solutions of \eqref{eq:MVSVE} and \eqref{eq:MVSVE2}, respectively, for initial values $X_0,\tilde{X}_0$ with the same law, and Brownian motions $B,\tilde{B}$, it is straightforward that $\mathcal{L}(X_t)=\mathcal{L}(\tilde{X}_t)$ a.s. for all $t\in[0,T]$.
\end{remark}

\section{Propagation of chaos: Proof of Theorem~\ref{thm:propagation} and \ref{thm:propagation2}}\label{sec:2}

An important argument in the proofs of the propagation of chaos results will be to show that the coupled processes $((X^{N,i},\underbar{X}^i))_{1\leq i\leq N}$ are identically distributed. To that end, the following lemma plays a crucial role. Recall that a sequence of random variables $(\zeta^1,\zeta^2,\dots)$ is called exchangeable if for any $N\in\N$ the vectors $(\zeta^1,\dots,\zeta^N)$ and $(\zeta^{\sigma(1)},\dots,\zeta^{\sigma(N)})$ have the same joint distribution, where $\lbrace\sigma(1),\dots,\sigma(N)\rbrace$ is an arbitrary permutation of $\lbrace 1,\dots,N\rbrace$.

\begin{lemma}\label{lem:exchange}
  Let $(A,\mathcal{F}_A)$ and $(B,\mathcal{F}_B)$ be measurable spaces and let for some fixed $N\in\N$, $(\zeta^1,\dots,\zeta^N)$ be an exchangeable family of $A$-valued random variables. Let $F\colon A\to B$ be a measurable function and define the family of random variables $(X^1,\dots,X^N)$ by $X^i:=F(\zeta^i)$ for $i\in \lbrace 1,\dots,N\rbrace$. Further, let $G\colon A^N\to B^N$ be a measurable function which fulfills the following exchangeability property:
  \begin{equation}\label{eq:exchangebailit}
    (y_1,\dots,y_N)=G((x_1,\dots,x_N))\Rightarrow (y_{\sigma(1)},\dots,y_{\sigma(N)})=G((x_{\sigma(1)},\dots,x_{\sigma(N)})),
  \end{equation}
  for arbitrary $x_1,\dots,x_N\in A$ and any permutation $\lbrace\sigma(1),\dots,\sigma(N)\rbrace$ of $\lbrace 1,\dots,N\rbrace$. Define the family of random variables $(Y^1,\dots,Y^N)$ by
  \begin{equation*}
    (Y^1,\dots,Y^N):=G\big( (\zeta^1,\dots,\zeta^N) \big).
  \end{equation*}
  Then, the coupled family of random variables $((X^i,Y^i))_{1\leq i\leq N}$ is exchangeable.
\end{lemma}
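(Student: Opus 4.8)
The plan is to realize the entire coupled family $((X^i,Y^i))_{1\leq i\leq N}$ as the image of the exchangeable vector $(\zeta^1,\dots,\zeta^N)$ under a single measurable map that is \emph{equivariant} for the coordinate-permutation actions, and then to push the distributional symmetry of $(\zeta^1,\dots,\zeta^N)$ forward along this map. Concretely, for a permutation $\sigma$ of $\{1,\dots,N\}$ write $\pi_\sigma$ for the relabeling operator acting on $N$-tuples over any fixed space (so $\pi_\sigma((z_1,\dots,z_N))=(z_{\sigma(1)},\dots,z_{\sigma(N)})$); exchangeability of $(\zeta^1,\dots,\zeta^N)$ means precisely that $\pi_\sigma((\zeta^1,\dots,\zeta^N))$ has the same law as $(\zeta^1,\dots,\zeta^N)$ for every such $\sigma$.

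First I would introduce the measurable map $H\colon A^N\to (B\times B)^N$ defined by
\[
  H((x_1,\dots,x_N)):=\big((F(x_1),y_1),\dots,(F(x_N),y_N)\big),\qquad (y_1,\dots,y_N):=G((x_1,\dots,x_N)),
\]
which is measurable because $F$ and $G$ are. By the definitions of $X^i$ and $(Y^1,\dots,Y^N)$ we have the identity $((X^i,Y^i))_{1\leq i\leq N}=H\big((\zeta^1,\dots,\zeta^N)\big)$. The heart of the argument is the intertwining relation $H\circ\pi_\sigma=\pi_\sigma\circ H$ for every permutation $\sigma$: fixing $(x_1,\dots,x_N)\in A^N$ and setting $(y_1,\dots,y_N)=G((x_1,\dots,x_N))$, the exchangeability property \eqref{eq:exchangebailit} of $G$ gives $(y_{\sigma(1)},\dots,y_{\sigma(N)})=G((x_{\sigma(1)},\dots,x_{\sigma(N)}))$, while $F$, being applied coordinatewise, trivially commutes with relabeling; hence the $i$-th entry of $H(\pi_\sigma((x_1,\dots,x_N)))$ equals $(F(x_{\sigma(i)}),y_{\sigma(i)})$, which is exactly the $i$-th entry of $\pi_\sigma(H((x_1,\dots,x_N)))$.

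Combining these observations, for any permutation $\sigma$ of $\{1,\dots,N\}$,
\[
  \big((X^{\sigma(i)},Y^{\sigma(i)})\big)_{1\leq i\leq N}=\pi_\sigma\big(H((\zeta^1,\dots,\zeta^N))\big)=H\big(\pi_\sigma((\zeta^1,\dots,\zeta^N))\big)=H\big((\zeta^{\sigma(1)},\dots,\zeta^{\sigma(N)})\big),
\]
and since $(\zeta^{\sigma(1)},\dots,\zeta^{\sigma(N)})$ and $(\zeta^1,\dots,\zeta^N)$ have the same law and $H$ is measurable, the right-hand side has the same law as $H((\zeta^1,\dots,\zeta^N))=((X^i,Y^i))_{1\leq i\leq N}$. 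As $\sigma$ was arbitrary, this is the claimed exchangeability. The only step requiring any care is the intertwining identity: the hypothesis on $G$ is stated as an implication about deterministic tuples, so one must unwind the definition of $H$ to see that it upgrades to the operator equality $H\circ\pi_\sigma=\pi_\sigma\circ H$; once that is established, the remainder is the routine "equality in law is preserved by measurable maps" argument.
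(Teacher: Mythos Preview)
Your proof is correct and follows essentially the same approach as the paper: both express the coupled vector $((X^i,Y^i))_i$ as a measurable function of $(\zeta^1,\dots,\zeta^N)$, verify that this function commutes with coordinate permutations (the paper does this componentwise by reducing each $Y^{\sigma(i)}$ to $G_1$ applied to a cyclic shift, whereas you state the equivariance $H\circ\pi_\sigma=\pi_\sigma\circ H$ directly), and then push forward the distributional symmetry of the $\zeta$'s. Your formulation is somewhat more streamlined, but the underlying argument is identical.
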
 

\begin{proof}
  Let $\lbrace \sigma(1),\dots,\sigma(N)\rbrace$ be an arbitrary permutation of $\lbrace 1,\dots,N\rbrace$. By \eqref{eq:exchangebailit}, we have that
  \begin{align}\label{eq:system1}
    Y^{\sigma(1)}&=G_1\big((\zeta^{\sigma(1)},\zeta^{\sigma(2)},\dots,\zeta^{\sigma(N-1)},\zeta^{\sigma(N)})\big),\notag\\
    Y^{\sigma(2)}&=G_1\big((\zeta^{\sigma(2)},\zeta^{\sigma(3)},\dots,\zeta^{\sigma(N)},\zeta^{\sigma(1)})\big),\notag\\
    &\dots \notag\\
    Y^{\sigma(N)}&=G_1\big((\zeta^{\sigma(N)},\zeta^{\sigma(1)},\dots,\zeta^{\sigma(N-2)},\zeta^{\sigma(N-1)})\big),
  \end{align}
  where $G_1$ denotes the first component of the $N$-dimensional mapping~$G$. Define $W^i:=(X^i,Y^i)$ for $i\in\lbrace 1,\dots,N\rbrace$. Then, by the definition of $X^i$ and \eqref{eq:system1},
  \begin{align}\label{eq:system2}
    W^{\sigma(1)}&=\Big(F(\zeta^{\sigma(1)}),G_1\big((\zeta^{\sigma(1)},\zeta^{\sigma(2)},\dots,\zeta^{\sigma(N-1)},\zeta^{\sigma(N)})\big) \Big),\notag\\
    W^{\sigma(2)}&=\Big(F(\zeta^{\sigma(2)}),G_1\big((\zeta^{\sigma(2)},\zeta^{\sigma(3)},\dots,\zeta^{\sigma(N)},\zeta^{\sigma(1)})\big) \Big),\notag\\
    &\dots\notag\\
    W^{\sigma(N)}&=\Big(F(\zeta^{\sigma(N)}),G_1\big((\zeta^{\sigma(N)},\zeta^{\sigma(1)},\dots,\zeta^{\sigma(N-2)},\zeta^{\sigma(N-1)})\big) \Big).
  \end{align}
  Analogous, we have
  \begin{align}\label{eq:system3}
    W^{1}&=\Big(F(\zeta^{1}),G_1\big((\zeta^{1},\zeta^{2},\dots,\zeta^{N-1},\zeta^{N})\big) \Big),\notag\\
    W^{2}&=\Big(F(\zeta^{2}),G_1\big((\zeta^{2},\zeta^{3},\dots,\zeta^{N},\zeta^{1})\big) \Big),\notag\\
    &\dots\notag\\
    W^{N}&=\Big(F(\zeta^{N}),G_1\big((\zeta^{N},\zeta^{1},\dots,\zeta^{N-2},\zeta^{N-1})\big) \Big).
  \end{align}
  Now, since by assumption $(\zeta^1,\dots,\zeta^N)$ and $(\zeta^{\sigma(1)},\dots,\zeta^{\sigma(N)})$ have the same joint distribution, \eqref{eq:system2} and \eqref{eq:system3} yield that also $(W^1,\dots,W^N)$ and $(W^{\sigma(1)},\dots,W^{\sigma(N)})$ have the same joint distribution which proves the claimed exchangeability.
\end{proof}

We start with the proof of Theorem~\ref{thm:propagation}.

\begin{proof}[Proof of Theorem~\ref{thm:propagation}]
  Let us briefly outline the main steps of the proof:
  \begin{itemize}
    \item[\textit{Step 1:}] We show the existence of the system of processes $(X^{N,i})_{i=1,\dots,N}$ uniquely solving~\eqref{eq:X^n}, for every $N\in\N$.
    \item[\textit{Step 2:}] We prove the inequality
    \begin{equation}\label{ineq:show0}
      \E[|X_t^{N,i}-\underbar{X}_t^i|^\delta] \lesssim \int_0^t \E\Big[ W_\delta\Big(\frac{1}{N}\sum\limits_{j=1}^N \delta_{\underbar{X}_s^j},\mathcal{L}(\underbar{X}_s^i)\Big)^\delta \Big]\dd s, \quad t \in [0,T],
    \end{equation}
    for any $ 1\leq i\leq N$. Recall that $X^{N,i}$ is defined in \eqref{eq:X^n} and $\underbar{X}^i$ is defined as the solution of the mean-field SVE~\eqref{eq:MVSVE} with initial condition~$X_0^i$ and driving Brownian motion~$B^i$.
    \item[\textit{Step 3:}] We prove that the right-hand side of \eqref{ineq:show0} tends to zero.
    \item[\textit{Step 4:}] We show that \textit{Step 2} and \textit{Step 3} imply the statement.
  \end{itemize}

  \medskip
  \textit{Step 1:} By the Lipschitz continuity of $\mu$ and $\sigma$, and the observation that $W_\delta(\bar{\rho}_x^N,\bar{\rho}_y^N)^\delta\leq\frac{1}{N}\sum_{j=1}^N |x_j-y_j|^\delta$ for $x,y\in\R^{N\times d}$ with the notation $\bar{\rho}_x^N=\frac{1}{N}\sum_{j=1}^N\delta_{x_j}\in \mathcal{P}_\delta(\R^d)$, we obtain for every $i\in\lbrace 1,\dots,N\rbrace$ the Lipschitz condition
  \begin{align*}
    \big| \mu(t,x_i,\bar{\rho}_x^N)-\mu(t,y_i,\bar{\rho}_y^N) \big |^\delta + \big| \sigma(t,x_i,\bar{\rho}_x^N)-\sigma(t,y_i,\bar{\rho}_y^N) \big|^\delta
    &\lesssim |x_i-y_i|^\delta+\frac{1}{N}\sum\limits_{j=1}^N |x_j-y_j|^\delta \\
    &\lesssim \|x-y\|_{N\times d}^\delta,
  \end{align*}
  where $\|\cdot\|_{N\times d}$ denotes the row sum norm on $\R^{N\times d}$. With the notation $\tilde{\mu}_i(t,x):=\mu(t,x_i,\bar{\rho}_x^N)$ and $\tilde{\sigma}_i(t,x)$ analogue for any $1\leq i\leq N$, we directly conclude that the growth condition is fulfilled by
  \begin{align*}
    \big| \tilde{\mu}_i(t,x) \big|+\big| \tilde{\sigma}_i(t,x) \big|&\leq \big| \tilde{\mu}_i(t,x)-\tilde{\mu}_i(t,0)\big| + \big| \tilde{\sigma}_i(t,x)-\tilde{\sigma}_i(t,0)\big| +\big| \tilde{\mu}_i(t,0)\big| + \big| \tilde{\sigma}_i(t,0)\big| \\
    &\lesssim \|x\|_{N\times d} +\big| \mu(t,0,\delta_0) \big|+\big| \sigma(t,0,\delta_0) \big|\\
    &\lesssim \|x\|_{N\times d} + C_{\delta_0}\\
    &\lesssim 1+\|x\|_{N\times d},
  \end{align*}
  for all $t\in[0,T]$, $x\in\R^{N\times d}$. Thus, due to the equivalence of all norms on the finite dimensional vector space $\R^{N\times d}$, we can apply the standard Volterra well-posedness result for Lipschitz coefficients from Lemma~\ref{lem:Lipsch_wp} to obtain the system of processes $(X^{N,i})_{i=1,\dots,N,}$ which uniquely solves \eqref{eq:X^n}, for every $N\in\N$.
  
  \medskip
  \textit{Step 2:} We consider the first summand on the left-hand side of \eqref{eq:Thm2}, i.e.~$\E[|X_t^{N,i}-\underbar{X}_t^i|^\delta]$. By using H{\"o}lder's inequality like in \eqref{eq:L1bound}, Fubini's theorem and the Burkholder--Davis--Gundy inequality such as the Lipschitz continuity of $\mu$ and $\sigma$, we can bound, for $1\leq i\leq N$,
  \begin{align}
    &\E[|X_t^{N,i}-\underbar{X}_t^i|^\delta]\notag\\
    &\quad= \E\bigg[ \bigg| \int_0^t K_\mu(s,t)\Big( \mu(s,X_s^{N,i},\bar{\rho}_s^N)-\mu\big(s,\underbar{X}_s^i,\mathcal{L}(\underbar{X}_s^i)\big) \Big)\dd s\notag\\
    &\quad\qquad\qquad +\int_0^t K_\sigma(s,t)\Big( \sigma(s,X_s^{N,i},\bar{\rho}_s^N)-\sigma\big(s,\underbar{X}_s^i,\mathcal{L}(\underbar{X}_s^i)\big) \Big)\dd B_s^i\bigg|^\delta \bigg]\notag\\
    &\quad\lesssim \Big( \int_0^t \big|K_\mu(s,t)\big|^{\frac{4+2\epsilon}{4+\epsilon}}\dd s\Big)^{\frac{4+\epsilon}{\epsilon}}\int_0^t \E\Big[\Big| \mu(s,X_s^{N,i},\bar{\rho}_s^N)-\mu\big(s,\underbar{X}_s^i,\mathcal{L}(\underbar{X}_s^i)\big) \Big|^\delta\Big]\dd s\notag\\
    &\quad\qquad\qquad +\E\bigg[\Big(\int_0^t \big| K_\sigma(s,t)\big( \sigma(s,X_s^{N,i},\bar{\rho}_s^N)-\sigma\big(s,\underbar{X}_s^i,\mathcal{L}(\underbar{X}_s^i)\big) \big|^2\dd s \Big)^{\frac{\delta}{2}}\bigg]\notag\\
    &\quad\lesssim \int_0^t \E\Big[\Big| \mu(s,X_s^{N,i},\bar{\rho}_s^N)-\mu\big(s,\underbar{X}_s^i,\mathcal{L}(\underbar{X}_s^i)\big) \Big|^\delta\Big]\dd s\notag\\
    &\quad\qquad +\Big( \int_0^t |K_\sigma(s,t)|^{2+\epsilon}\dd s \Big)^{\frac{4}{\epsilon}} \int_0^t\E\Big[\big| \sigma(s,X_s^{N,i},\bar{\rho}_s^N)-\sigma\big(s,\underbar{X}_s^i,\mathcal{L}(\underbar{X}_s^i)\big) \big|^\delta\Big]\dd s\notag\\
    &\quad\lesssim \int_0^t \E\Big[|X_s^{N,i}-\underbar{X}_s^i|^\delta + W_\delta(\bar{\rho}_s^N,\mathcal{L}(\underbar{X}_s^i))^\delta \Big]\dd s,\label{ineq1}
  \end{align}
  for any $t\in[0,T]$. By Remark~\ref{rem:1}, we obtain that $\mathcal{L}(\underbar{X}_s^i)=\mathcal{L}(\underbar{X}_s^1)$. Hence, we get that
  \begin{align}
    W_\delta\big(\bar{\rho}_s^N,\mathcal{L}(\underbar{X}_s^i)\big)^\delta
    &=W_\delta\bigg(\bar{\rho}_s^N,\mathcal{L}(\underbar{X}_s^1)\bigg)^\delta\notag\\
    &\leq 2^\delta W_\delta\bigg(\bar{\rho}_s^N,\frac{1}{N}\sum\limits_{j=1}^N \delta_{\underbar{X}_s^j}\bigg)^\delta +2^\delta W_\delta\bigg(\frac{1}{N}\sum\limits_{j=1}^N \delta_{\underbar{X}_s^j},\mathcal{L}(\underbar{X}_s^1)\bigg)^\delta\notag\\
    &\lesssim \frac{1}{N} \sum\limits_{j=1}^N \big| X_s^{N,j}-\underbar{X}_s^j \big|^\delta + W_\delta\bigg(\frac{1}{N}\sum\limits_{j=1}^N \delta_{\underbar{X}_s^j},\mathcal{L}(\underbar{X}_s^1)\bigg)^\delta. \label{ineq2}
  \end{align}
  Moreover, by Remark~\ref{rem:1}, we can find a measurable map $G\colon \R^d\times C([0,T];\R^m)\to C([0,T];\R^d)$ such that, for any $1\leq i\leq N$,
  \begin{equation*}
    \underbar{X}^i=G(X_0^i,B^i).
  \end{equation*}
  In the same way, there is a measurable map $G_N\colon (\R^{d}\times C([0,T];\R^m))^N\to C([0,T];\R^d)^N$, such that
  \begin{equation*}
    (X^{N,1},\dots ,X^{N,N})=G_N\big((X_0^1,\dots ,X_0^N),(B^1,\dots ,B^N)\big).
  \end{equation*}
  More generally, by the symmetry of the system \eqref{eq:X^n}, for any permutation $\varsigma$ of $\lbrace 1,\dots ,N \rbrace$, it is
  \begin{equation*}
    (X^{N,\varsigma(1)},\dots ,X^{N,\varsigma(N)})=G_N\big((X_0^{\varsigma(1)},\dots ,X_0^{\varsigma(N)}),(B^{\varsigma(1)},\dots ,B^{\varsigma(N)})\big).
  \end{equation*}
  Hence, since the random variables $((X_0^i,B^i))_{1\leq i\leq N}$ are i.i.d. and, in particular, exchangeable, we can apply Lemma~\ref{lem:exchange} to obtain that the coupled processes $\big((X^{N,i},\underbar{X}^i)\big)_{1\leq i\leq N}$ are exchangeable and hence, in particular, are identically distributed. We can for $i=1$ insert \eqref{ineq2} into \eqref{ineq1} and conclude by Jensen's inequality that
  \begin{align*}
    &\E[|X_t^{N,1}-\underbar{X}_t^1|^\delta]\\
    &\quad\lesssim \int_0^t \E\bigg[|X_s^{N,1}-\underbar{X}_s^1|^\delta + \frac{1}{N} \sum\limits_{j=1}^N \big |X_s^{N,j}-\underbar{X}_s^j \big|^\delta + W_\delta\bigg(\frac{1}{N}\sum\limits_{j=1}^N \delta_{\underbar{X}_s^j},\mathcal{L}(\underbar{X}_s^1)\bigg)^\delta \bigg]\dd s\\
    &\quad= \int_0^t \E\bigg[2|X_s^{N,1}-\underbar{X}_s^1|^\delta + W_\delta\bigg(\frac{1}{N}\sum\limits_{j=1}^N \delta_{\underbar{X}_s^j},\mathcal{L}(\underbar{X}_s^1)\bigg)^\delta \bigg]\dd s.
  \end{align*}
  Using Gr{\"o}nwall's lemma, we deduce
  \begin{equation*}
    \E[|X_t^{N,1}-\underbar{X}_t^1|^\delta] \lesssim \int_0^t \E\bigg[ W_\delta\bigg(\frac{1}{N}\sum\limits_{j=1}^N \delta_{\underbar{X}_s^j},\mathcal{L}(\underbar{X}_s^1)\bigg)^\delta \bigg]\dd s,
  \end{equation*}
  and since the processes $\big((X^{N,i},\underbar{X}^i)\big)_{1\leq i\leq N}$ are identically distributed, this completes \textit{Step~2}.
  
  \medskip
  \textit{Step 3:}
  First, we show that
  \begin{equation}\label{convergence_uniform}
    \lim\limits_{N\to\infty}\E\Big[ W_\delta\Big(\frac{1}{N}\sum\limits_{j=1}^N \delta_{\underbar{X}_s^j},\mathcal{L}(\underbar{X}_s^1)\Big)^\delta \Big]=0,
  \end{equation}
 for any $s\in[0,T]$ by showing convergence in probability and uniform integrability. By the Glivenko--Cantelli theorem (see \cite[Chapter~26, Theorem~1]{Shorack2009} for a general version) and since the $\underbar{X}^j$ are i.i.d., we get the convergence
  \begin{equation*}
    \frac{1}{N}\sum\limits_{j=1}^N \delta_{\underbar{X}_s^j}\to \mathcal{L}(\underbar{X}_s^1), \quad \text{as }N\to \infty,
  \end{equation*}
  almost surely and hence in probability. Furthermore, using again the notation $\bar{\rho}^N_s= \frac{1}{N}\sum\limits_{j=1}^N \delta_{\underbar{X}_s^j}$ we can bound using H{\"o}lder's inequality and the boundedness of all moments of $\underbar{X}_s^i$, $1\leq i\leq N$, in \eqref{fin_moments}, that
  \begin{align}
    &\sup\limits_{N\in\N}\E\big[ W_\delta\big( \bar{\rho}^N_s,\mathcal{L}(\underbar{X}_s^1) \big)^\delta \mathbbm{1}_{\lbrace W_\delta(\bar{\rho}^N_s,\mathcal{L}(\underbar{X}_s^1))>K \rbrace} \big]\notag\\
    &\qquad \leq K^{-1} \sup\limits_{N\in\N} \E\big[ W_\delta\big( \bar{\rho}^N_s,\mathcal{L}(\underbar{X}_s^1) \big)^{\delta+1} \big]\notag\\
    &\qquad \leq K^{-1} \sup\limits_{N\in\N} \E\big[ W_{\delta+1}\big( \bar{\rho}^N_s,\mathcal{L}(\underbar{X}_s^1) \big)^{\delta+1} \big]\notag\\
    &\qquad \leq K^{-1} \sup\limits_{N\in\N}\E\big[ W_{\delta+1}\big( \bar{\rho}^N_s,\delta_0 \big)^{\delta+1} + W_{\delta+1}\big( \delta_0,\mathcal{L}(\underbar{X}_s^1) \big)^{\delta+1} \big]\notag\\
    &\qquad = K^{-1} \sup\limits_{N\in\N}\E\Big[ \frac{1}{N}\big(\sum\limits_{i=1}^N |\underbar{X}_s^i|^{\delta+1}\big) + |\underbar{X}_s^1|^{\delta+1} \Big]\notag\\
    &\qquad = 2K^{-1}\E[|\underbar{X}_s^1|^{\delta+1}]\to 0,\label{eq:konv}
  \end{align}
  as $K\to\infty$, which shows uniform $\delta$-integrability of the family of random variables
  \begin{equation*}
    \bigg(W_\delta\Big(\frac{1}{N}\sum\limits_{j=1}^N \delta_{\underbar{X}_s^j},\mathcal{L}(\underbar{X}_s^1)\Big)\bigg)_{N\in\N}.
  \end{equation*}
  Hence, Vitali's convergence theorem (see \cite[Theorem~4.5.4]{Bogachev2007}) reveals the $L^\delta$-convergence as claimed in \eqref{convergence_uniform}.

  To conclude \textit{Step~3}, it remains to show that the convergence \eqref{convergence_uniform} is uniform in $s$. Therefore, we first notice that for any
  $p\geq \delta$,
  \begin{align}
    \E\bigg[ W_\delta\Big( \frac{1}{N}\sum\limits_{j=1}^N \delta_{\underbar{X}_s^j},\mathcal{L}(\underbar{X}_s^1) \Big)^{p} \bigg]&\leq\E\bigg[ W_p\Big( \frac{1}{N}\sum\limits_{j=1}^N \delta_{\underbar{X}_s^j},\mathcal{L}(\underbar{X}_s^1) \Big)^{p} \bigg]\notag\\
    &\lesssim \E\bigg[ W_p\Big( \frac{1}{N}\sum\limits_{j=1}^N \delta_{\underbar{X}_s^j},\delta_0 \Big)^{p} \bigg] + W_p\big( \delta_0,\mathcal{L}(\underbar{X}_s^1) \big)^{p} \notag\\
    &= \frac{1}{N}\E\Big[ \sum\limits_{j=1}^N |\underbar{X}_s^j|^{p}\Big] + \E[|\underbar{X}_s^1|^{p}] = 2\E[|\underbar{X}_s^1|^{p}] <\infty,\label{eq:boundEW2}
  \end{align}
  by \eqref{fin_moments}. With Jensen's inequality \eqref{eq:boundEW2} also follows for $1\leq p< \delta$.
   
  Let $k :=\lceil \delta\rceil\geq\delta$ denote the smallest integer greater or equal to $\delta$. Notice that with the same argumentation as in \eqref{eq:konv} by substituting the exponent $\delta$ by $k$ and then bounding from above using the $k+1$-Wasserstein distance and again by Vitali's convergence theorem also the $L^k$-convergence of the $\delta$-Wasserstein distance in \eqref{convergence_uniform} follows. Once we show that this $L^k$-convergence is uniform in $s$, then it will follow that
  \begin{align}\label{conv_unif}
    & \lim\limits_{N\to\infty}\sup\limits_{ s\in [0,T]}\E\Big[ W_\delta\Big(\frac{1}{N}\sum\limits_{j=1}^N \delta_{\underbar{X}_s^j},\mathcal{L}(\underbar{X}_s^1)\Big)^\delta \Big]\notag\\
    &\qquad\leq \lim\limits_{N\to\infty}\sup\limits_{ s\in [0,T]}\E\Big[ W_\delta\Big(\frac{1}{N}\sum\limits_{j=1}^N \delta_{\underbar{X}_s^j},\mathcal{L}(\underbar{X}_s^1)\Big)^k \Big]^{\frac{\delta}{k}}=0.
  \end{align}
  Therefore, using the factorization
  \begin{equation}\label{ineq:factor}
    a^{k}-b^{k} =(a-b) \sum_{r=0}^{k-1} a^{k-1-r}b^r,
  \end{equation}
  and H{\"o}lder's inequality with $\delta$ and $q=\frac{4+2\epsilon}{4+\epsilon}$ such that $\frac{1}{\delta}+\frac{1}{q}=1$, we get
  \begin{align}
    &\bigg| \E\Big[ W_\delta\Big( \frac{1}{N}\sum\limits_{j=1}^N \delta_{\underbar{X}_t^j},\mathcal{L}(\underbar{X}_t^1) \Big)^{k} \Big] - \E\Big[ W_\delta\Big( \frac{1}{N}\sum\limits_{j=1}^N \delta_{\underbar{X}_s^j},\mathcal{L}(\underbar{X}_s^1) \Big)^{k} \Big] \bigg|\notag\\
    &\qquad = \bigg|\E\bigg[ \bigg(W_\delta\Big( \frac{1}{N}\sum\limits_{j=1}^N \delta_{\underbar{X}_t^j},\mathcal{L}(\underbar{X}_t^1) \Big)-W_\delta\Big( \frac{1}{N}\sum\limits_{j=1}^N \delta_{\underbar{X}_s^j},\mathcal{L}(\underbar{X}_s^1) \Big)\bigg) \notag \\
    &\qquad\qquad\sum\limits_{r=0}^{k-1}W_\delta\Big( \frac{1}{N}\sum\limits_{j=1}^N \delta_{\underbar{X}_t^j},\mathcal{L}(\underbar{X}_t^1) \Big)^{k-1-r}W_\delta\Big( \frac{1}{N}\sum\limits_{j=1}^N \delta_{\underbar{X}_s^j},\mathcal{L}(\underbar{X}_s^1) \Big)^{r} \bigg]\bigg|\notag\\
    &\qquad \leq \bigg|\E\bigg[ \bigg(W_\delta\Big( \frac{1}{N}\sum\limits_{j=1}^N \delta_{\underbar{X}_t^j},\mathcal{L}(\underbar{X}_t^1) \Big)-W_\delta\Big( \frac{1}{N}\sum\limits_{j=1}^N \delta_{\underbar{X}_s^j},\mathcal{L}(\underbar{X}_s^1) \Big)\bigg)^\delta\bigg]^{\frac{1}{\delta}}  \notag\\
    &\qquad\qquad\E\bigg[\bigg(\sum\limits_{r=0}^{k-1}W_\delta\Big( \frac{1}{N}\sum\limits_{j=1}^N \delta_{\underbar{X}_t^j},\mathcal{L}(\underbar{X}_t^1) \Big)^{k-1-r}W_\delta\Big( \frac{1}{N}\sum\limits_{j=1}^N \delta_{\underbar{X}_s^j},\mathcal{L}(\underbar{X}_s^1) \Big)^{r} \bigg)^q\bigg]^{\frac{1}{q}}\bigg|.\label{ineq:abs1}
  \end{align}
  Using again H{\"o}lder's inequality such as \eqref{eq:boundEW2}, we can bound the second expectation by
  \begin{align}
    &\E\bigg[\bigg(\sum\limits_{r=0}^{k-1}W_\delta\Big( \frac{1}{N}\sum\limits_{j=1}^N \delta_{\underbar{X}_t^j},\mathcal{L}(\underbar{X}_t^1) \Big)^{k-1-r}W_\delta\Big( \frac{1}{N}\sum\limits_{j=1}^N \delta_{\underbar{X}_s^j},\mathcal{L}(\underbar{X}_s^1) \Big)^{r} \bigg)^q\bigg]^{\frac{1}{q}}\notag\\
    &\lesssim \bigg(\sum\limits_{r=0}^{k-1}\E\Big[W_\delta\Big( \frac{1}{N}\sum\limits_{j=1}^N \delta_{\underbar{X}_t^j},\mathcal{L}(\underbar{X}_t^1) \Big)^{q(k-1-r)}W_\delta\Big( \frac{1}{N}\sum\limits_{j=1}^N \delta_{\underbar{X}_s^j},\mathcal{L}(\underbar{X}_s^1) \Big)^{qr}\Big]\bigg)^{\frac{1}{q}}\notag\\
    &\lesssim \bigg(\sum\limits_{r=0}^{k-1}\E\Big[W_\delta\Big( \frac{1}{N}\sum\limits_{j=1}^N \delta_{\underbar{X}_t^j},\mathcal{L}(\underbar{X}_t^1) \Big)^{2q(k-1-r)}\Big]^{\frac{1}{2}}\E\Big[W_\delta\Big( \frac{1}{N}\sum\limits_{j=1}^N \delta_{\underbar{X}_s^j},\mathcal{L}(\underbar{X}_s^1) \Big)^{2qr}\Big]^{\frac{1}{2}}\bigg)^{\frac{1}{q}}<\infty.\label{ineq:abs2}
  \end{align}
  Inserting \eqref{ineq:abs2} into \eqref{ineq:abs1} and using the triangle inequality
  \begin{align*}
    &W_\delta\Big( \frac{1}{N}\sum\limits_{j=1}^N \delta_{\underbar{X}_t^j},\mathcal{L}(\underbar{X}_t^1) \Big)\\
    &\quad\leq W_\delta\Big( \frac{1}{N}\sum\limits_{j=1}^N \delta_{\underbar{X}_t^j},\frac{1}{N}\sum\limits_{j=1}^N \delta_{\underbar{X}_s^j} \Big)+W_\delta\Big( \frac{1}{N}\sum\limits_{j=1}^N \delta_{\underbar{X}_s^j},\mathcal{L}(\underbar{X}_s^1) \Big) +  W_\delta\Big( \mathcal{L}(\underbar{X}_s^1),\mathcal{L}(\underbar{X}_t^1) \Big),
  \end{align*}
  which also holds if we switch $s$ and $t$, we arrive at
  \begin{align*}
    &\bigg| \E\Big[ W_\delta\Big( \frac{1}{N}\sum\limits_{j=1}^N \delta_{\underbar{X}_t^j},\mathcal{L}(\underbar{X}_t^1) \Big)^{k} \Big] - \E\Big[ W_\delta\Big( \frac{1}{N}\sum\limits_{j=1}^N \delta_{\underbar{X}_s^j},\mathcal{L}(\underbar{X}_s^1) \Big)^{k} \Big] \bigg|\\
    &\qquad \lesssim \E\bigg[ \bigg| \bigg(W_\delta\Big( \frac{1}{N}\sum\limits_{j=1}^N \delta_{\underbar{X}_t^j},\mathcal{L}(\underbar{X}_t^1) \Big)-W_\delta\Big( \frac{1}{N}\sum\limits_{j=1}^N \delta_{\underbar{X}_s^j},\mathcal{L}(\underbar{X}_s^1) \Big)\bigg)\bigg|^\delta\bigg]^{\frac{1}{\delta}}\\
    &\qquad \lesssim \E\bigg[ \bigg( W_\delta\Big( \frac{1}{N}\sum\limits_{j=1}^N \delta_{\underbar{X}_t^j},\frac{1}{N}\sum\limits_{j=1}^N \delta_{\underbar{X}_s^j} \Big) +  W_\delta\Big( \mathcal{L}(\underbar{X}_t^1),\mathcal{L}(\underbar{X}_s^1) \Big)\bigg)^\delta \bigg]^{\frac{1}{\delta}}\\
    &\qquad \lesssim \E\Big[ W_\delta\Big( \frac{1}{N}\sum\limits_{j=1}^N \delta_{\underbar{X}_t^j},\frac{1}{N}\sum\limits_{j=1}^N \delta_{\underbar{X}_s^j} \Big)^\delta \Big]^{\frac{1}{\delta}} +\E\Big[  W_\delta\Big( \mathcal{L}(\underbar{X}_t^1),\mathcal{L}(\underbar{X}_s^1) \Big)^\delta \Big]^{\frac{1}{\delta}}\\
    &\qquad \lesssim \E[|\underbar{X}_t^1-\underbar{X}_s^1|^\delta]^{\frac{1}{\delta}}\\
    &\qquad \lesssim |t-s|^\beta,
  \end{align*}
  where the last line holds by Remark~\ref{rem:2} for any $\beta\in (0,\gamma-1/p)$ with $\gamma\in(0,\frac{1}{2}]$ from Assumption~\ref{ass:kernels1}. Hence, we obtain that \eqref{conv_unif} holds which shows together with \eqref{ineq:show0} that
  \begin{equation}
    \lim\limits_{N\to\infty}\sup\limits_{t\in [0,T]}\E[|X_t^{N,1}-\underbar{X}_t^1|^\delta]=0,\label{limit0_2}
  \end{equation}
  and knowing that $((X^{N,i},\underbar{X}^i))_{1\leq i\leq N}$ are identically distributed, this completes \textit{Step~3}.
  
  \medskip
  \textit{Step~4:} We already know from \textit{Step~3} that the first summand in \eqref{eq:Thm2} converges to zero. For the second summand, we use the triangle inequality and Jensen's inequality, to obtain
  \begin{align}
    &\sup\limits_{t\in [0,T]}\E\Big[ W_\delta\Big( \frac{1}{N}\sum\limits_{i=1}^N\delta_{X_t^{N,i}},\mathcal{L}(\underbar{X}_t^1) \Big)^\delta \Big]\notag\\
    &\qquad \lesssim \sup\limits_{ t\in [0,T]}\E\Big[ W_\delta\Big( \frac{1}{N}\sum\limits_{i=1}^N\delta_{X_t^{N,i}},\frac{1}{N}\sum\limits_{i=1}^N\delta_{\underbar{X}_t^i} \Big)^\delta \Big]+\sup\limits_{t\in [0,T]}\E\Big[ W_\delta\Big( \frac{1}{N}\sum\limits_{i=1}^N\delta_{\underbar{X}_t^i},\mathcal{L}(\underbar{X}_t^1) \Big)^\delta \Big]\notag\\
    &\qquad \lesssim \sup\limits_{t\in [0,T]}\E\Big[ \frac{1}{N}\sum\limits_{i=1}^N |X_t^{N,i}-\underbar{X}_t^i|^\delta\Big]+\sup\limits_{t\in [0,T]}\E\Big[ W_\delta\Big( \frac{1}{N}\sum\limits_{i=1}^N\delta_{\underbar{X}_t^i},\mathcal{L}(\underbar{X}_t^1) \Big)^\delta \Big],\label{boundW1}
  \end{align}
  which also tends to $0$ as $N\to\infty$, by \eqref{conv_unif} and \eqref{limit0_2}.
\end{proof}

We continue with the proof of Theorem~\ref{thm:propagation2}. Since the proof is similar to the proof of Theorem~\ref{thm:propagation2}, we focus, for the sake of brevity, on the main differences.

\begin{proof}[Proof of Theorem~\ref{thm:propagation2}]
  We prove the statement by using the same \textit{Step~1}-\textit{Step~4} as in the proof of Theorem~\ref{thm:propagation}, but with $\delta=1$. Only for \textit{Step~2}, we need to differ between Assumption~\ref{ass:kernels2}, referred to as \textit{Case (i)}, and Assumption~\ref{ass:kernels3}, referred to as \textit{Case (ii)}, to hold.

  \medskip
  \textit{Step 1:} By Remark~\ref{rem:multi-dimVolterra}, we obtain as in the proof of Theorem~\ref{thm:propagation} the unique system of stochastic processes $(X^{N,i})_{i=1,\dots,N}$ that solves \eqref{eq:X^n}.
  
  \medskip
  \textit{Step 2:}

  \textit{Case (i):} Suppose the kernels $K_\mu,K_\sigma$ and initial condition $X_0$ satisfy Assumption~\ref{ass:kernels2}. To mimic inequality \eqref{ineq:show0}, we use the semimartingale property
  \begin{align*}
    X_t^{N,i} - \underbar{X}_t^i &= \int_0^t K_\sigma(s,s)\Big(\sigma(s,X_s^{N,i})-\sigma(s, \underbar{X}_s^i)\Big)\dd B_s\\
    &\qquad+ \int_0^t K_\mu(s,s)\Big(\mu(s,X_s^{N,i},\bar{\rho}_s^N)-\mu(s,\underbar{X}_s^i,\mathcal{L}(\underbar{X}_s^i))\Big) \dd s\\
    &\qquad  +  \int_0^t \bigg(\int_0^s \partial_2 K_\mu(u,s)\Big(\mu(u,X_u^{N,i},\bar{\rho}_u^N)-\mu(u,\underbar{X}_u^i,\mathcal{L}(\underbar{X}_u^i))\Big) \dd u \\
    &\qquad+ \int_0^s \partial_2 K_\sigma(u,s)\Big(\sigma(u,X_u^{N,i})-\sigma(u,\underbar{X}_u^i)\Big)\dd B_u\bigg) \dd s,
  \end{align*}
  to perform a Yamada--Watanabe approach exactly as we did around equality~\eqref{eqmean}, and obtain for fixed $i\in\lbrace1,\dots,N\rbrace$ with the notation $M^{N,i}(t):=\E[|X_t^{N,i}-\underbar{X}_t^i|]+\E[|\tilde{Y}_t|]$, where $\tilde{Y}_t:=\int_0^t\sigma(s,X_s^{N,i})\dd B_s^i-\int_0^t \sigma(s,\underbar{X}_s^i)\dd B_s^i$, that
  \begin{equation*}
    M^{N,i}(t)\lesssim \int_0^t \Big(M^{N,i}(s)+\E[W_1(\bar{\rho}_s^N,\mathcal{L}(\underbar{X}^i_s))]\Big)\dd s,
  \end{equation*}
  such that, proceeding as in the proof of Theorem~\ref{thm:propagation} including applying Gr{\"o}nwall's inequality, we obtain
  \begin{equation}\label{ineq:convRHS}
    \E[|X_t^{N,i}-\underbar{X}_t^i|]\lesssim \int_0^t \E\big[ W_1(\frac{1}{N}\sum\limits_{j=1}^N\delta_{\underbar{X}_s^j},\mathcal{L}(\underbar{X}_s^i)) \big]\dd s.
  \end{equation}
  
  \textit{Case (ii):} Suppose the kernels $K_\mu,K_\sigma$ and initial condition $X_0$ satisfy Assumption~\ref{ass:kernels3}. As in \textit{Case (i)} to mimic inequality \eqref{ineq:show0}, we use the semimartingale property 
  \begin{align*}
    X_t^{N,i}-\underbar{X}_t^i &= \int_0^t \tilde{K}(0)\Big(\sigma(s,X_s^{N,i})-\sigma(s, \underbar{X}_s^i)\Big)\dd B_s\\
    &\qquad+ \int_0^t \tilde{K}(0)\Big(\mu(s,X_s^{N,i},\bar{\rho}_s^N)-\mu(s,\underbar{X}_s^i,\mathcal{L}(\underbar{X}_s^i))\Big) \dd s\\
    &\qquad  +  \int_0^t \bigg(\int_0^s \tilde{K}^\prime(s-u)\Big(\mu(u,X_u^{N,i},\bar{\rho}_u^N)-\mu(u,\underbar{X}_u^i,\mathcal{L}(\underbar{X}_u^i))\Big) \dd u \\
    &\qquad+ \int_0^s \tilde{K}^\prime(s-u)\Big(\sigma(u,X_u^{N,i})-\sigma(u,\underbar{X}_u^i)\Big)\dd B_u\bigg) \dd s,
  \end{align*} 
  perform a Yamada--Watanabe approach and apply Gr{\"o}nwall's inequality as in \eqref{eqmean2} which yields
  \begin{equation*}
    \E[|X_t^{N,i}-\underbar{X}_t^i|] \lesssim \int_0^t \E[W_1(\frac{1}{N}\sum\limits_{j=1}^N\delta_{\underbar{X}_s^j},\mathcal{L}(\underbar{X}^i_s))]\Big)\dd s.
  \end{equation*}

  \medskip
  \textit{Step 3:} Obtaining the convergence to zero uniformly in $s$ of the right-hand side of \eqref{ineq:convRHS} follows now easily by using
  \begin{equation*}
    \E\big[ W_1(\frac{1}{N}\sum\limits_{j=1}^N\delta_{\underbar{X}_s^j},\mathcal{L}(\underbar{X}_s^1)) \big]\leq \E\big[ W_2(\frac{1}{N}\sum\limits_{j=1}^N\delta_{\underbar{X}_s^j},\mathcal{L}(\underbar{X}_s^1))^2 \big]^{\frac{1}{2}},
  \end{equation*}
  and then using \cite[(5.19)]{Carmona2018}, and proceeding as in \cite[Proof of Theorem~2.12]{Carmona2018b}. 
  
  \medskip
  \textit{Step 4:} As in \eqref{boundW1}, we obtain
  \begin{align}
    &\sup\limits_{0\leq t\leq T}\E\Big[ W_1\Big( \frac{1}{N}\sum\limits_{i=1}^N\delta_{X_t^{N,i}},\mathcal{L}(\underbar{X}_t^1) \Big) \Big]\notag\\
    &\qquad \lesssim \sup\limits_{0\leq t\leq T}\E\Big[ \frac{1}{N}\sum\limits_{i=1}^N |X_t^{N,i}-\underbar{X}_t^i|\Big]+\sup\limits_{0\leq t\leq T}\E\Big[ W_1\Big( \frac{1}{N}\sum\limits_{i=1}^N\delta_{\underbar{X}_t^i},\mathcal{L}(\underbar{X}_t^1) \Big) \Big],\label{ineq:settingII2}
  \end{align}
  which tends to zero by the uniform convergence to zero of the right-hand side of \eqref{ineq:convRHS}, and finishes the proof.
\end{proof}

\section{Rate of convergence: Proof of Lemma~\ref{lem:rates} and~\ref{lem:rates2}}\label{sec:3}

The proofs of Lemma~\ref{lem:rates} and Lemma~\ref{lem:rates2} rely on a quantitative Glivenko--Cantelli theorem due to Fournier and Guillin~\cite{Fournier2015}, which provides a sharp estimate of the $\delta$-Wasserstein distance. For the sake of completeness, we recall \cite[Theorem~1]{Fournier2015} in the following lemma.

\begin{lemma}\label{lem1}
  Let $\delta>0$ and $\bar{\rho}^N:=\frac{1}{N}\sum\limits_{i=1}^N \delta_{X^i}$ be the empirical distribution of i.i.d. random variables $(X^i)_{i=1,\dots,N}$ with common distribution $\rho$ such that $\rho\in\mathcal{P}_p(\R^d)$ for every $p\geq 1$. Then, we have
  \begin{equation*}
    \E[W_\delta(\bar{\rho}^N,\rho)^\delta]\lesssim \varepsilon_N,
  \end{equation*}
  where $(\varepsilon_N)_{N\in\N}$ is given by \eqref{def:varepsilon}, i.e.
  \begin{equation*}
    \varepsilon_N= \Bigg\{\begin{array}{lr}
        N^{-1/2}, & \text{if } d<2\delta,\\
        N^{-1/2}\log_2(1+N), & \text{if } d=2\delta,\\
        N^{-\delta/d}, & \text{if } d>2\delta,
    \end{array}
  \end{equation*}
  and
  \begin{equation*}
    \E[W_1(\bar{\rho}^N,\rho)]\lesssim N^{-1/2}.
  \end{equation*}
\end{lemma}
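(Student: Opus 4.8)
The plan is to deduce the statement directly from the quantitative Glivenko--Cantelli estimate of Fournier and Guillin, \cite[Theorem~1]{Fournier2015}, which asserts the following: for every $p>0$, $d\ge 1$, and any $\mu\in\mathcal P_q(\R^d)$ with $q>p$ avoiding a single critical exponent, there is a constant $C=C(p,d,q)$ such that for all $N\ge 1$
\begin{equation*}
  \E\big[W_p(\mu_N,\mu)^p\big]\le C\,M_q(\mu)^{p/q}\Big(a_N+N^{-(q-p)/q}\Big),\qquad M_q(\mu):=\int_{\R^d}|x|^q\,\mu(\d x),
\end{equation*}
where $a_N=N^{-1/2}$ if $p>d/2$, $a_N=N^{-1/2}\log(1+N)$ if $p=d/2$, and $a_N=N^{-p/d}$ if $p<d/2$.

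First I would apply this with $p=\delta$ and $\mu=\rho$. Since by hypothesis $\rho\in\mathcal P_q(\R^d)$ for every $q\ge 1$, the prefactor $M_q(\rho)^{\delta/q}$ is finite for every choice of $q$, so I am free to take $q$ as large as convenient. A short computation shows $(q-\delta)/q\ge 1/2$ once $q\ge 2\delta$, and $(q-\delta)/q\ge \delta/d$ once $q\ge \delta d/(d-\delta)$; since $\delta d/(d-\delta)<2\delta$ whenever $d>2\delta$, picking any $q>2\delta$ (and, if necessary, slightly perturbing it to dodge the finitely many critical exponents) makes the auxiliary term $N^{-(q-\delta)/q}$ bounded by the leading term $a_N$ in every regime. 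It then remains only to match $a_N$ against the definition of $\varepsilon_N$ in \eqref{def:varepsilon}, using $p>d/2\iff d<2\delta$, $p=d/2\iff d=2\delta$ and $p<d/2\iff d>2\delta$, and noting that $\log$ and $\log_2$ differ by a fixed multiplicative constant; this yields $\E[W_\delta(\bar\rho^N,\rho)^\delta]\lesssim\varepsilon_N$.

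For the $W_1$-bound I would again invoke \cite[Theorem~1]{Fournier2015}, now with $p=1$: in the one-dimensional regime in which it is applied one has $d=1$, hence $p=1>1/2=d/2$, so $a_N=N^{-1/2}$, and the same choice of a large $q$ annihilates the term $N^{-(q-1)/q}$, giving $\E[W_1(\bar\rho^N,\rho)]\lesssim N^{-1/2}$. Alternatively, for $d=1$ one can argue directly from the identity $W_1(\bar\rho^N,\rho)=\int_{\R}|F_N(x)-F(x)|\,\d x$, with $F_N$ and $F$ the empirical and true distribution functions, to get $\E[W_1(\bar\rho^N,\rho)]\le N^{-1/2}\int_{\R}\sqrt{F(x)(1-F(x))}\,\d x$, and the integral is finite because $F(x)(1-F(x))\le\min\{F(x),1-F(x)\}$ decays faster than any polynomial by Markov's inequality, again using that $\rho$ has moments of every order.

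I expect essentially no genuine obstacle here, since the lemma is a verbatim specialization of \cite[Theorem~1]{Fournier2015}; the only point requiring a little care is the bookkeeping of the exponent $(q-p)/q$. One must check that this auxiliary rate can always be pushed below the leading rate by enlarging $q$, which is legitimate precisely because $\rho\in\mathcal P_q(\R^d)$ for all $q$ --- had one only assumed a fixed finite $q$-th moment, the term $N^{-(q-p)/q}$ would be genuine and would degrade the rates. In other words, the hypothesis of finite moments of all orders is exactly what isolates the clean rate $\varepsilon_N$.
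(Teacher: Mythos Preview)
Your proposal is correct and matches the paper's approach exactly: the paper does not give a proof of this lemma at all but simply states it as a restatement of \cite[Theorem~1]{Fournier2015} (``For the sake of completeness, we recall \cite[Theorem~1]{Fournier2015} in the following lemma''). Your write-up supplies precisely the bookkeeping---choosing $q$ large enough so that the residual term $N^{-(q-\delta)/q}$ is absorbed into $\varepsilon_N$, and observing that the $W_1$-claim is used only for $d=1$---that the paper leaves implicit in this citation.
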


With this lemma at hand, we can prove Lemma~\ref{lem:rates} and \ref{lem:rates2}.

\begin{proof}[Proof of Lemma~\ref{lem:rates}]
  By Lemma~\ref{lem1} we obtain that for any $t\in[0,T]$,
  \begin{equation}
    \E\Big[ W_\delta\Big( \frac{1}{N}\sum\limits_{i=1}^N \delta_{\underbar{X}_t^i},\mathcal{L}(\underbar{X}_t^1) \Big)^\delta \Big] \lesssim \varepsilon_N,\label{boundW_d}
  \end{equation}
  where $(\varepsilon_N)_{N\in\N}$ is given by \eqref{def:varepsilon} and the right-hand side does not depend on $t$. Plugging \eqref{boundW_d} into \eqref{ineq:show0} and taking the supremum over $[0,T]$ and maximum over $1,\dots,N$ shows the desired convergence rate of the first term in \eqref{eq:rates}. Then, using this and plugging \eqref{boundW_d} into \eqref{boundW1} gives the desired rate for the second term.
\end{proof}

\begin{proof}[Proof of Lemma~\ref{lem:rates2}]
  \textit{Case (i):} Suppose the kernels $K_\mu,K_\sigma$ and initial condition $X_0$ satisfy Assumption~\ref{ass:kernels2}. By Lemma~\ref{lem1} we obtain that
  \begin{equation}
    \E\Big[ W_1\Big( \frac{1}{N}\sum\limits_{i=1}^N \delta_{\underbar{X}_t^i},\mathcal{L}(\underbar{X}_t^1) \Big) \Big] \lesssim N^{-1/2},\label{boundW_d2}
  \end{equation}
  independently from $t\in[0,T]$. Plugging \eqref{boundW_d2} into \eqref{ineq:convRHS} and \eqref{ineq:settingII2} yields the statement.

  \medskip
  \textit{Case (ii):} Suppose the kernels $K_\mu,K_\sigma$ and initial condition $X_0$ satisfy Assumption~\ref{ass:kernels3}. Plugging \eqref{boundW_d2} into the analogues of \eqref{ineq:convRHS} and \eqref{ineq:settingII2} yields the statement.
\end{proof}


\providecommand{\bysame}{\leavevmode\hbox to3em{\hrulefill}\thinspace}
\providecommand{\MR}{\relax\ifhmode\unskip\space\fi MR }
\providecommand{\MRhref}[2]{%
  \href{http://www.ams.org/mathscinet-getitem?mr=#1}{#2}
}
\providecommand{\href}[2]{#2}

\end{document}